\title{New Approaches to the Monotonicity Inequality for Linear Stochastic PDEs}
\author{Suprio Bhar}
\address{Suprio Bhar, Department of Mathematics and Statistics, Indian Institute of Technology Kanpur, Kalyanpur, Kanpur - 208016, India.}
\email{suprio@iitk.ac.in}
\author{Arvind Kumar Nath}
\address{Arvind Kumar Nath, Department of Mathematics and Statistics, Indian Institute of Technology Kanpur, Kalyanpur, Kanpur - 208016, India.}
\email{yarvind@iitk.ac.in}
\date{}
\newcommand{\R}{\mathbb{R}}
\newcommand{\C}{\mathbb{C}}
\newcommand{\Z}{\mathbb{Z}}
\newcommand{\Sc}{\mathcal{S}}
\newcommand{\N}{\mathbb{N}}
\newcommand{\Hop}{\mathbf{H}}
\newcommand{\D}{\mathcal{D}}
\newcommand{\inpr}[3][]{\left\langle#2 \,,\, #3\right\rangle_{#1}}
\newcommand{\norm}[2][]{{\!\|#2\|\!_{#1}}}
\newtheorem{theorem}{Theorem}[section]
\newtheorem{lemma}[theorem]{Lemma}
\newtheorem{proposition}[theorem]{Proposition}
\theoremstyle{definition}
\newtheorem{definition}[theorem]{Definition}
\theoremstyle{remark}
\newtheorem{remark}[theorem]{Remark}
\numberwithin{equation}{section}
\begin{document}
\begin{abstract}
The Monotonicity inequality is an important tool in the understanding of existence and uniqueness of strong solutions for Stochastic PDEs. In this article, we discuss three approaches to establish this deterministic inequality explicitly.
\end{abstract}

\keywords{$\mathcal{S}^\prime$ valued process, Hermite-Sobolev space, Monotonicity Inequality}
\subjclass[2020]{46F10, 46E35, 60H15}

\maketitle

\section{introduction and Main results}\label{s1:intro}
The Monotonicity inequality (see \cite{MR570795,MR2560625,MR1465436}), similar to the Coercivity inequality, is an important tool in the analysis of stochastic PDEs (see \cite{MR1207136,MR2329435,MR2295103,MR2560625, MR688144,MR4600818,MR4583740,MR2674056,MR2857016,MR2765423,MR2111320,MR1985790,MR1500166,MR1242575,MR771478,MR1465436,MR2370567,MR1661766,MR570795,MR705933,MR553909,MR2356959,MR1135324,MR876085,MR3235846,MR3236753}), primarily focusing on the uniqueness of strong solutions. Its application in obtaining the existence and uniqueness of SPDEs was further explored in \cite{MR2479730, brajeev-arxiv, MR3063763, MR2560625, MR2590157, MR3331916, MR4568882, MR1465436}. In particular, \cite{MR2590157} utilized this inequality to demonstrate the existence and uniqueness of strong solutions to linear stochastic PDEs in the dual of a countably Hilbertian Nuclear space. 

We now focus on the Schwartz space $\Sc(\R)$ (see \cite{MR1681462, MR2296978, MR771478, MR1157815}) of real-valued rapidly decreasing smooth functions on $\R$, which has a countably Hilbertian Nuclear topology (see \cite{MR771478}). We recall the details in Section \ref{S2:description} and work with stochastic PDEs in the space $\Sc^\prime(\R)$ of tempered distributions, which is dual to $\Sc(\R)$. The countably Hilbertian topology is described in terms of the Hermite-Sobolev norms $\|\cdot\|_p$, associated to the separable Hilbert spaces, the so-called Hermite-Sobolev spaces $\Sc_p(\R), p \in \R$ (see Section \ref{S2:description}). In this article, we attempt to explicitly establish the Monotonicity inequality for a pair of `adjoint' operators.

Let $\sigma, \;  b $ be real valued smooth functions with bounded
derivatives. We define operators $ A ^ * , L ^ * $ on $\Sc^\prime(\R)$ as follows:
     \begin{align*}
          A ^ \ast \phi = & - \partial ( \sigma \phi ) &   L ^ \ast \phi = &
\frac { 1 }
 { 2 }  \partial ^ 2
( \sigma ^ 2 \phi ) - \partial ( b \phi ), \forall \phi \in \Sc^\prime(\R).
     \end{align*}
Note that $A \phi \in \Sc(\R)$ and $L\phi \in \Sc(\R)$, whenever $\phi \in \Sc(\R)$. Fix $p \in \R$, we say that the  pair of linear operators $(L^\ast, A^\ast)$ satisfies
the Monotonicity inequality in $\|\cdot\|_p$, if we have
\begin{equation}\label{Monotoniticity-inequality-chap7}
2\inpr[p]{\phi}{L^\ast\phi} + \|A^\ast\phi\|_{p}^2\leq C\|\phi\|^2_p,\quad \forall
\phi\in \Sc(\R),
\end{equation}
where $C=C(\sigma,b,p) > 0$ is a positive constant.

The Monotonicity inequality for $(L^\ast, A^\ast)$ has been proved explicitly in the following cases.

    \begin{theorem}\label{old-results}
    For every $p\in \R$, the Monotonicity inequality \eqref{Monotoniticity-inequality-chap7} for the pair $(L^\ast,A^\ast)$ holds for the following cases:
    \begin{enumerate}[label=(\roman*)]
        \item \cite[Theorem 2.1]{MR2590157}. $\sigma \in \R$ and $b \in \R$.
        \item\cite[Theorem 4.6]{MR3331916}. $\sigma\in \R$ and $b(x):=b_0+Mx$ on $\R$ for fixed $b_0\in \R, M\in\R$.
    \end{enumerate}
\end{theorem}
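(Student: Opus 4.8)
The plan is to reduce both cases to an explicit computation in the orthonormal basis of Hermite functions $\{h_n\}_{n\ge 0}$, in terms of which $\norm[p]{\phi}^2 = \sum_{n\ge 0}(2n+1)^{2p}c_n^2$ with $c_n := \inpr{\phi}{h_n}$, and $\inpr[p]{\phi}{\psi} = \sum_{n\ge 0}(2n+1)^{2p}c_n\inpr{\psi}{h_n}$. Since case (i) is case (ii) with $M=0$, I would treat (ii). Writing $a,a^\dagger$ for the annihilation/creation operators, so that $a h_n = \sqrt{n}\,h_{n-1}$, $a^\dagger h_n = \sqrt{n+1}\,h_{n+1}$, $\partial = \tfrac{1}{\sqrt 2}(a-a^\dagger)$, and multiplication by $x$ equals $\tfrac{1}{\sqrt 2}(a+a^\dagger)$, the operators become
\begin{align*}
A^\ast\phi &= \tfrac{\sigma}{\sqrt 2}(a^\dagger - a)\phi, \\
L^\ast\phi &= \tfrac{\sigma^2}{2}\partial^2\phi - b_0\partial\phi - \tfrac{M}{2}\bigl(I + a^2 - (a^\dagger)^2\bigr)\phi,
\end{align*}
where $\partial^2 = \tfrac12\bigl(a^2 + (a^\dagger)^2 - 2a^\dagger a - 1\bigr)$. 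Each of these is a finite combination of operators shifting the Hermite index by $0,\pm 1,\pm 2$, so $2\inpr[p]{\phi}{L^\ast\phi} + \norm[p]{A^\ast\phi}^2$ is an explicit quadratic form in the $c_n$, with a diagonal part (coefficient of $c_n^2$) and off-diagonal parts (coefficients of $c_n c_{n+1}$ and $c_n c_{n+2}$).

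The heart of the argument is that the terms which naively grow faster than $\norm[p]{\phi}^2$ cancel. Writing $\lambda_n := 2n+1$, the diagonal part of $2\inpr[p]{\phi}{L^\ast\phi}$ coming from $\tfrac{\sigma^2}{2}\partial^2$ contributes $-\sigma^2\lambda_n^{2p}(n+\tfrac12) = -\tfrac{\sigma^2}{2}\lambda_n^{2p+1}$ to the coefficient of $c_n^2$, of apparent order $\lambda_n^{2p+1}$, while $\norm[p]{A^\ast\phi}^2$ contributes $\tfrac{\sigma^2}{2}\bigl(\lambda_{n+1}^{2p}(n+1) + \lambda_{n-1}^{2p}\,n\bigr)$; after reindexing, these combine into $\tfrac{\sigma^2}{4}\bigl(\lambda_{n+1}^{2p+1} - 2\lambda_n^{2p+1} + \lambda_{n-1}^{2p+1}\bigr)$ plus a first difference of $\lambda_n^{2p}$, i.e.\ a discrete second difference of $n\mapsto\lambda_n^{2p+1}$. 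The off-diagonal $c_n c_{n+2}$ coefficient likewise reduces to $\sigma^2\sqrt{(n+1)(n+2)}\cdot\tfrac12\bigl(\lambda_n^{2p} - 2\lambda_{n+1}^{2p} + \lambda_{n+2}^{2p}\bigr)$, a second difference of $n\mapsto\lambda_n^{2p}$, while the $c_n c_{n+1}$ coefficient (from $-b_0\partial$) is a first difference $\lambda_n^{2p} - \lambda_{n+1}^{2p}$. This is the exact analogue of the $p=0$ case, where all these finite-difference factors vanish and one recovers the inequality by integration by parts.

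To finish I would invoke the elementary difference estimates $|\lambda_{n+1}^s - \lambda_n^s|\le C_s\,\lambda_n^{s-1}$ and $|\lambda_{n+1}^s - 2\lambda_n^s + \lambda_{n-1}^s|\le C_s\,\lambda_n^{s-2}$, valid for every real $s$ by the mean value theorem applied to $t\mapsto t^s$ on $[\lambda_{n-1},\lambda_{n+1}]$. Applying these with $s=2p+1$ to the diagonal and with $s=2p$ to the off-diagonal, the second-difference factors gain two powers of $\lambda_n$; after multiplying by the $\sqrt{(n+1)(n+2)}\sim\tfrac12\lambda_n$ coming from $a^2,(a^\dagger)^2$, every surviving coefficient is $O(\lambda_n^{2p})$ (the $M$-term, being only first order in these differences, is directly $O(\lambda_n^{2p})$). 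Cauchy--Schwarz / AM--GM on the cross terms, $|c_n c_{n\pm j}|\le\tfrac12(c_n^2 + c_{n\pm j}^2)$, together with $\lambda_{n+j}^{2p}\le C\,\lambda_n^{2p}$, then bounds the whole form by $C\norm[p]{\phi}^2$, giving \eqref{Monotoniticity-inequality-chap7}.

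The step I expect to be delicate is precisely the exact cancellation of the $\lambda_n^{2p+1}$-order diagonal terms: one must check that the bookkeeping across the three index shifts lines up to produce a genuine second difference rather than a leftover term of order $\lambda_n^{2p+1}$, which would be fatal. Closely related is making the difference estimates uniform in $n$ down to the low indices $n=0,1$, where the shifted weights $\lambda_{n-1},\lambda_{n-2}$ and the factors $\sqrt{n},\sqrt{n(n-1)}$ degenerate, and uniform across both signs of $p$. These boundary contributions are finite in number and harmless, but they must be isolated and estimated directly rather than through the asymptotic difference bounds.
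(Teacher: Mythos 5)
Your computation is sound, but note first that the paper contains no proof of Theorem \ref{old-results} at all: it is quoted from the literature, with the two cases attributed to \cite[Theorem 2.1]{MR2590157} and \cite[Theorem 4.6]{MR3331916}. Measured against those original proofs and against the machinery this paper builds for its own generalizations, your route is essentially the same method in flattened, coefficientwise form. The weighted first and second differences of $\lambda_n^{2p}=(2n+1)^{2p}$ that drive your cancellation are precisely the sequences the paper proves bounded in Lemmas \ref{seq-of-order-1/n} and \ref{seq-of-order-1/n^2}: for instance $\widetilde{\alpha}_n(w)=\sqrt{n/2}\,\bigl[\bigl(\tfrac{2n-1}{2n+1}\bigr)^w-1\bigr]$ is a normalized first difference with your $\sqrt{n}$-type weight, and $b_n$ in Lemma \ref{seq-of-order-1/n^2} is exactly your weighted second difference; the paper bounds them by an analytic-function argument equivalent to your mean-value estimates, uniformly down to small $n$, which also settles the boundary issue you flag. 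Where the treatments genuinely diverge is packaging: instead of tracking the coefficients of $c_n c_{n\pm j}$ in the quadratic form, the paper (following \cite{MR3331916}) encodes the differences as bounded operators $T_{\widetilde{\alpha}}U_{-1}+T_{\widetilde{\beta}}U_{+1}$, identifies $\partial^\ast=-\partial+T$ (Lemma \ref{adj-partial-on-complex}), and writes the whole form as its $p=0$ value --- handled by integration by parts, your ``exact analogue of the $p=0$ case'' --- plus perturbations of the type $\Hop^{w}\partial\Hop^{-w}-\partial$ that are bounded (Lemma \ref{bndop1}). That operator formulation costs more setup but is what the paper actually needs: it survives complex powers $\Hop^{w}$, enabling both the affine-$\sigma$ extension (Theorem \ref{Mono-ineq-affine-Form}) and the Three Lines Lemma interpolation (Theorem \ref{proof-by-3-line lemma in affine case}). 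Your direct bookkeeping is more elementary and fully adequate for the two quoted cases --- in particular you correctly identify the one genuinely dangerous point, the exact cancellation of the $\lambda_n^{2p+1}$-order diagonal terms, and your combined diagonal coefficient $\tfrac{\sigma^2}{4}\bigl(\lambda_{n+1}^{2p+1}-2\lambda_n^{2p+1}+\lambda_{n-1}^{2p+1}\bigr)$ plus a first difference of $\lambda_n^{2p}$ checks out --- but it would have to be redone from scratch for each new pair of operators, which is precisely the rigidity the paper's bounded-operator and interpolation approaches are designed to avoid.
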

Some results involving non-constant multipliers are also available in the literature, see for example, \cite[Theorem 4.4]{MR4117986} and \cite{alt-mono}.

In this article, we discuss three further approaches to the Monotonicity
inequality in the usual Hermite-Sobolev norm $\|\cdot\|_p$.  A brief overview of the proposed approaches are described in the subsections below. We discuss
these in detail in the later sections. However, our motivation is to highlight these approaches and as such, to avoid notational complexity, we shall state and prove all the results in the one-dimensional setting only. The results extend in a natural manner to their higher-dimensional analogues.

\subsection{Approach using Bounded Operators} 
 Recall that the derivative operator $\partial$ is a
densely defined, closed, linear, unbounded operators on the Hermite-Sobolev
spaces $\Sc_p(\R)$ (see \cite{MR1215939, MR3331916, MR771478}). Motivated by the integration by parts formula, a special structure of the
adjoint of these operators were identified in \cite[Theorem 2.1]{MR3331916}, namely that the adjoint is the negative of the derivative operator and perturbed by a specific bounded operator. Consequently, the Monotonicity inequality was proved when $\sigma$ is a constant and $b$ is in an affine form (see \cite[Theorem 4.2]{MR3331916}) for a class of adjoint operators. We revisit this result for the identification (see Lemma \ref{adj-partial-on-complex}) of the adjoint of the derivative operator and extend this identification to that on the Complex-valued Hermite-Sobolev
spaces $\Sc_p(\R; \C)$ and further extend to Lemma \ref{bndop1}. A key observation in this result is the usage of a `conjugate' with the Hermite operator $\Hop$ (defined in the Subsection \ref{S:Hermite-Op} below). Consequently, we prove the Monotonicity inequality for the case when both $\sigma$ and $b$ can be taken to be in the affine form (see Theorem \ref{Mono-ineq-affine-Form}), extending \cite[Theorem 4.2]{MR3331916}. 
\begin{theorem}\label{Mono-ineq-affine-Form}
Fix $\alpha, \beta, \gamma, \delta \in \R$ and set $\sigma  ( x )= \alpha x + \beta, \forall x \in \R$  and $b ( x ) = \gamma x + \delta, \forall x \in \R$.
Then, for all $ p \in\R$, the Monotonicity inequality \eqref{Monotoniticity-inequality-chap7} holds for the pair $(L^\ast, A^\ast)$.
 \end{theorem}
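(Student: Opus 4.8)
The plan is to recast the left-hand side of \eqref{Monotoniticity-inequality-chap7} as a single quadratic form and to show that the operator generating it is bounded on $\Sc_p(\R)$. Write $B^{\dagger_p}$ for the adjoint of a densely defined operator $B$ on $\Sc_p(\R)$ with respect to $\inpr[p]{\cdot}{\cdot}$, and recall that $\inpr[p]{\phi}{\psi}=\inpr[0]{\Hop^p\phi}{\Hop^p\psi}=\inpr[0]{\phi}{\Hop^{2p}\psi}$, so that, since $\Hop$ is self-adjoint in $\|\cdot\|_0$, one has $B^{\dagger_p}=\Hop^{-2p}B^{\dagger_0}\Hop^{2p}$, where $\dagger_0$ denotes the $\|\cdot\|_0$ (i.e.\ $L^2$) adjoint. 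By construction $A^\ast$ and $L^\ast$ are the $\|\cdot\|_0$-adjoints of $A:=\sigma\partial$ and $L:=\tfrac12\sigma^2\partial^2+b\partial$, whence $(A^\ast)^{\dagger_0}=A$ and $(L^\ast)^{\dagger_0}=L$. Working over $\R$, the quadratic form is insensitive to the antisymmetric part of an operator, so $\inpr[p]{\phi}{B\phi}=\inpr[p]{\phi}{B^{\dagger_p}\phi}$; together with $\norm[p]{A^\ast\phi}^2=\inpr[p]{\phi}{(A^\ast)^{\dagger_p}A^\ast\phi}$ this gives, for every $\phi\in\Sc(\R)$,
\begin{equation*}
2\inpr[p]{\phi}{L^\ast\phi}+\norm[p]{A^\ast\phi}^2=\inpr[p]{\phi}{G_p\phi},\qquad G_p:=L^\ast+(L^\ast)^{\dagger_p}+(A^\ast)^{\dagger_p}A^\ast.
\end{equation*}
Thus it suffices to prove that $G_p$ extends to a bounded operator on $\Sc_p(\R)$, and then the constant $C$ in \eqref{Monotoniticity-inequality-chap7} may be taken to be its operator norm.

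First I would dispose of the undistorted part, corresponding to $p=0$. Expanding $L^\ast\phi=\tfrac12\partial^2(\sigma^2\phi)-\partial(b\phi)$, $L\phi$, and $AA^\ast\phi=\sigma\partial\!\left(-\partial(\sigma\phi)\right)$ and collecting the coefficients of $\phi''$, $\phi'$ and $\phi$, the second- and first-order terms cancel identically, leaving
\begin{equation*}
L^\ast+L+AA^\ast=M_{\sigma'^2-b'},
\end{equation*}
the operator of multiplication by $\sigma'^2-b'$. For affine $\sigma,b$ this is simply multiplication by the constant $\alpha^2-\gamma$, which is trivially bounded. This uses nothing beyond boundedness of $\sigma',b'$ and already recovers the classical situation.

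The genuinely delicate step is to control the distortion produced by conjugation with the Hermite operator when $p\neq 0$. Using $B^{\dagger_p}=\Hop^{-2p}B^{\dagger_0}\Hop^{2p}$ one writes
\begin{equation*}
G_p=(L^\ast+L+AA^\ast)+\Hop^{-2p}\left([L,\Hop^{2p}]+[A,\Hop^{2p}]A^\ast\right),
\end{equation*}
so by the previous step everything reduces to showing that the second summand is bounded on $\Sc_p(\R)$. Here the affine hypothesis is decisive: since $\sigma'=\alpha$ is constant one has the exact identity $A^\ast=-A-\alpha$, and since $\sigma\sigma'$ is affine one has $L=\tfrac12A^2+M_c\partial$ with $c:=b-\tfrac12\sigma\sigma'$ affine. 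Substituting these, and using $[A^2,\Hop^{2p}]=A[A,\Hop^{2p}]+[A,\Hop^{2p}]A$, the two commutator terms combine into
\begin{equation*}
\tfrac12\,[A,[A,\Hop^{2p}]]-\alpha\,[A,\Hop^{2p}]+[M_c\partial,\Hop^{2p}],
\end{equation*}
in which the contributions of highest formal order have cancelled: the two seemingly leading pieces $\tfrac12A[A,\Hop^{2p}]$ and $-\tfrac12[A,\Hop^{2p}]A$ recombine into the double commutator, which is of lower order than either factor.

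The main obstacle is to turn this formal cancellation into a quantitative bound. Adopt the convention that an operator is of order $\le r$ if it maps $\Sc_q(\R)$ continuously into $\Sc_{q-r}(\R)$, so that order $\le 0$ means boundedness on $\Sc_p(\R)$ and, by the commutator estimates underlying Lemma~\ref{adj-partial-on-complex} and Lemma~\ref{bndop1}, a commutator with a given operator lowers its order by one. Since $A$ has order $1$ and $\Hop^{2p}$ has order $2p$, each of $[A,[A,\Hop^{2p}]]$, $[A,\Hop^{2p}]$ and $[M_c\partial,\Hop^{2p}]$ has order $\le 2p$, whence after pre-multiplication by $\Hop^{-2p}$ each is of order $\le 0$ and hence bounded on $\Sc_p(\R)$. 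Concretely, I would verify this through the creation/annihilation calculus for $\Hop=2N+1$ (using $a\,f(N)=f(N+1)a$ and $a^\dagger f(N)=f(N-1)a^\dagger$), in which every commutator with $\Hop^{2p}$ acquires a scalar factor of the form $(2N+1)^{2p}-(2N+1\pm 2)^{2p}$; this behaves like $\Hop^{2p-1}$ and is therefore exactly compensated by the prefactor $\Hop^{-2p}$. Combining this boundedness with the $p=0$ identity shows that $G_p$ is bounded on $\Sc_p(\R)$, which completes the proof.
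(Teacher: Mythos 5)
Your proposal is correct and is essentially the paper's own first approach in different packaging: the paper likewise reduces the inequality to the $p=0$ integration-by-parts case plus boundedness of the conjugation distortions, writing these as $\Hop^{w}B\Hop^{-w}-B$ (specialized to $w=p$, in the complex spaces $\Sc_q(\R;\C)$) where you write $\Hop^{-2p}[B,\Hop^{2p}]$, and it proves their boundedness via exactly the sequence estimates on quantities like $\left(\tfrac{2n+3}{2n+1}\right)^{w}-1$ (Lemmas \ref{seq-of-order-1/n}, \ref{bnd-scale-shift-ops}, \ref{bndop1}) that underlie your creation/annihilation computation. Your double-commutator identity $\tfrac12[A,[A,\Hop^{2p}]]-\alpha[A,\Hop^{2p}]+[M_c\partial,\Hop^{2p}]$ is a tidier expression of the same leading-order cancellation between the $L^\ast$-distortion and $A\,\bigl(A^\ast\text{-distortion}\bigr)$ that the paper builds into its combined bounded-operator statements (items \ref{Mix-op-1} and \ref{A12L3}, where the individual summands are unbounded and only the stated combinations are bounded).
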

We have  used bounded operators to get this result which are mentioned in Section \ref{S2:proofs} with proofs.

\subsection{Approach using Good terms}
 In \cite{MR3331916}, the recurrence relations on the derivative of Hermite functions $h_n$   and the multiplication by co-ordinate functions   were crucially used to obtain the Monotonicity inequality \cite[Theorem 4.2]{MR3331916}. However, as we do not have any recurrence relation for multiplication by general smooth multipliers, this leads to a difficulty in extending the technique of \cite{MR3331916} to handle smooth multipliers in the usual norm $\norm[p]{\cdot}$.  In this approach, we attempt to obtain the Monotonicity inequality in the usual norm $\norm[p]{\cdot}$ by identifying suitable `good' terms.
 
 \begin{definition}\label{good-term-chap7}
Let \( \sigma_1 \) and \( \sigma_2 \) be real-valued smooth functions with
bounded derivatives. Let $p \in \Z_+$. We refer to the term
\[
\inpr[0]{\partial^{k_1} \sigma_1 \cdot x^{\alpha_1} \partial^{\beta_1}
\phi}{\partial^{k_2} \sigma_2 \cdot x^{\alpha_2} \partial^{\beta_2} \psi} \quad
\forall \phi, \psi \in \mathcal{S}(\R)
\]
as a good term with respect to $ \inpr[p]{\cdot}{\cdot}$, if
$\alpha_1, \alpha_2, \beta_1, \beta_1, k_1$ and $k_2$ are non-negative integers satisfying
\[
\alpha_1 + \beta_1 + 1_{\{0\}}(k_1) + 1_{\{0\}}(k_2) + \alpha_2 + \beta_2 \leq
4p.
\]
A finite sum of good terms shall be abbreviated as `FSGT'. 

We shall refer to the terms of the form $\partial ^ { k} \sigma  . x
^ { \alpha } \partial ^ { \beta  } \phi$ with $\alpha + \beta + 1_{\{0\}}(k) \leq p$ as `Terms of order $\leq p$'.
\end{definition}

 \begin{lemma}\label{good-term-bnd-chap7}
  A bilinear form given by a good term is bounded, i.e. if $ \sigma _ 1 , \;  \sigma
_ 2 $ are real valued smooth functions with bounded derivatives, then the bilinear
form $\inpr [0] { \cdot } { \cdot } : \Sc(\R) \times \Sc(\R) \to \R $ defined by
  \[ ( \phi , \psi ) \mapsto  \inpr [ 0 ] { \partial ^ { k _ 1 } \sigma _ 1 . x
^ { \alpha _ 1 } \partial ^ { \beta _ 1 } \phi } { \partial ^ { k _ 2 } \sigma _
2 . x ^ { \alpha _ 2} \partial ^ { \beta _ 2 } \psi } \; \forall \phi , \psi \in
\Sc(\R) \]
  is bounded w.r.t. $\norm[p]{ \cdot}$,  if $ \alpha _ 1 + \beta _ 1 + 1 _ { \{
0 \} } ( k _ 1 ) + 1 _ { \{ 0 \} } ( k _ 2 ) + \alpha _ 2 + \beta _ 2 \leq  4 p
$.
 \end{lemma}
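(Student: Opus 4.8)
The plan is to recast the bilinear estimate as a single operator bound on the Hermite--Sobolev scale, exploiting the self-adjointness of $\Hop$. Writing $T_1 := \partial^{k_1}\sigma_1 \cdot x^{\alpha_1}\partial^{\beta_1}$ and $T_2 := \partial^{k_2}\sigma_2 \cdot x^{\alpha_2}\partial^{\beta_2}$, I would first move $T_2$ across the $L^2$-pairing to write $\inpr[0]{T_1\phi}{T_2\psi} = \inpr[0]{T_2^\ast T_1\phi}{\psi}$, where $T_2^\ast$ is the $L^2$-adjoint. Since $\Hop^{-p}\colon \Sc_0(\R)\to\Sc_p(\R)$ is an isometric isomorphism and $\Hop^p$ is self-adjoint, one has the duality $|\inpr[0]{u}{\psi}| \le \norm[-p]{u}\,\norm[p]{\psi}$, so the claim reduces to showing that $T_2^\ast T_1$ maps $\Sc_p(\R)$ into $\Sc_{-p}(\R)$ boundedly.

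The engine of the proof is an order-counting principle. I would record the two standard building blocks, namely that both multiplication by $x$ and the differentiation $\partial$ map $\Sc_q(\R)\to\Sc_{q-1/2}(\R)$ boundedly for every $q\in\R$; these follow at once from the three-term recurrences for the Hermite functions, since each raises or lowers the Hermite index by one with a coefficient of size $O((2n+1)^{1/2})$. Assigning weight $\tfrac12$ in the index shift to each factor of $x$ and each factor of $\partial$, an operator built as a product of such factors drops the index by half its total order.

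The one genuinely delicate ingredient, and the step I expect to be the main obstacle, is the behaviour of the multiplication operators $\partial^{k_i}\sigma_i$. A smooth function with all derivatives bounded is itself at most of linear growth, hence dominated by $1+|x|$, so multiplication by it should cost at most one unit of order, i.e. map $\Sc_q(\R)\to\Sc_{q-1/2}(\R)$; but when $k_i\ge 1$ the function $\partial^{k_i}\sigma_i$ is genuinely bounded and multiplication by it should be of order $0$, i.e. bounded on $\Sc_q(\R)$. This dichotomy is exactly what the term $1_{\{0\}}(k_i)$ encodes. I would prove these two multiplication bounds by induction on $q$ over the nonnegative integers, using the explicit form $\Hop=-\partial^2+x^2+1$: expanding $\Hop^q(g\phi)$ by the product rule produces the principal term $g\,\Hop^q\phi$ together with strictly lower-order terms in which at least one derivative falls on $g$, and the latter are controlled because every derivative of $g$ is bounded; the half-integer and negative indices then follow by duality and interpolation on the scale. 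The care here lies in organizing the commutators so that the extra $\tfrac12$ present in the growing case $k_i=0$ is accounted for correctly.

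Granting the building blocks, the conclusion is bookkeeping. The operator $T_1$ has order $m_1 := \alpha_1+\beta_1+1_{\{0\}}(k_1)$, hence maps $\Sc_p(\R)\to\Sc_{p-m_1/2}(\R)$; taking adjoints preserves order, so $T_2^\ast$ has order $m_2 := \alpha_2+\beta_2+1_{\{0\}}(k_2)$, and the composition $T_2^\ast T_1$ maps $\Sc_p(\R)\to\Sc_{p-(m_1+m_2)/2}(\R)$. The hypothesis $m_1+m_2\le 4p$ gives $p-(m_1+m_2)/2\ge -p$, so by the continuous inclusion $\Sc_{p-(m_1+m_2)/2}(\R)\hookrightarrow\Sc_{-p}(\R)$ we obtain $\norm[-p]{T_2^\ast T_1\phi}\le C\,\norm[p]{\phi}$. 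Combined with the duality bound from the first step this yields $|\inpr[0]{T_1\phi}{T_2\psi}|\le C\,\norm[p]{\phi}\,\norm[p]{\psi}$, which is the asserted boundedness.
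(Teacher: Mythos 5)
Your proposal is correct in substance and actually supplies more than the paper does: the paper states Lemma \ref{good-term-bnd-chap7} without giving a proof, implicitly treating it as a consequence of the operator-regularity facts recalled in Section \ref{S2:description}. Your reduction --- pair against $\psi$ via $|\inpr[0]{u}{\psi}|\le\norm[-p]{u}\,\norm[p]{\psi}$ and show that $T_2^\ast T_1$ maps $\Sc_p(\R)$ boundedly into $\Sc_{-p}(\R)$, with $T_2^\ast=(-1)^{\beta_2}\partial^{\beta_2}x^{\alpha_2}M_{\partial^{k_2}\sigma_2}$ a composition of the same building blocks in reverse order (which is the one-line justification that ``adjoints preserve order'' deserves) --- combined with the half-unit bookkeeping ($x$ and $\partial$ each shift the index by $\tfrac12$; bounded multipliers $\partial^{k_i}\sigma_i$ with $k_i\ge1$ shift by $0$; linear-growth multipliers $\sigma_i$ by $\tfrac12$) reproduces exactly the threshold $\alpha_1+\beta_1+1_{\{0\}}(k_1)+1_{\{0\}}(k_2)+\alpha_2+\beta_2\le 4p$. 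You are also right that the multiplication estimates are the crux, and it is worth noting that they are genuinely sharper than anything the paper records: Section \ref{S2:description} only quotes $M_f:\Sc_q(\R)\to\Sc_{q-1}(\R)$ from \cite[Proposition 3.2]{MR2373102}, and that weaker bound cannot reach the stated threshold (for instance with $p=1$, $k_1=k_2=0$, $\alpha_1=\beta_1=1$, $\alpha_2=\beta_2=0$ the condition reads $4\le4$, but the full-unit cost for each $\sigma_i$ overshoots by one). So your claims $M_{\sigma}:\Sc_q(\R)\to\Sc_{q-\frac12}(\R)$ for linear-growth $\sigma$ and $M_{\partial^k\sigma}:\Sc_q(\R)\to\Sc_q(\R)$ for $k\ge1$ are indispensable, and they are true: for $2q=m\in\Z_+$ use the norm equivalence $\norm[m/2]{u}\asymp\sum_{a+b\le m}\norm[0]{x^a\partial^b u}$ and the Leibniz expansion of $x^a\partial^b(\sigma f)$, then dualize for negative indices; since $p\in\Z_+$ in Definition \ref{good-term-chap7}, every index encountered is a half-integer and the interpolation step you invoke can even be dispensed with.

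Two small corrections to the write-up. First, the heuristic ``$|\sigma(x)|\le C(1+|x|)$, hence multiplication costs one unit'' is not a proof: pointwise domination does not transfer to $\Sc_q$-norms for $q\ne 0$. In the Leibniz induction it is legitimately used only in the $j=0$ term, where it bounds an $\mathcal{L}^2$-norm by $\norm[0]{x^a\partial^b f}+\norm[0]{x^{a+1}\partial^b f}$; phrase it that way. Second, the paper's Hermite operator is $\Hop=x^2-\partial^2$ with $\Hop h_n=(2n+1)h_n$, not $-\partial^2+x^2+1$; this is harmless for the structure of the argument but the conventions should match.
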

 
 This approach relies on the fact that $\norm[p]{\phi}=\norm[0]{\Hop^p\phi}, \forall \phi \in \Sc_p(\R)$ (see Proposition \ref{Hop-properties}) and we then use a recurrence relation for $\Hop^p\phi$ given by the following result. 

 \begin{lemma}\label{H^P-lemma}
Let $p \geq 3$ be an integer. For any $\phi \in \Sc(\R)$, we have
    \begin{align*}
         \Hop ^ p  \phi =& \sum _ {  j    =   0   } ^ { p } ( - 1 ) ^ { p - j }
\binom { p } { j } x ^ {  2 j  } \partial ^ { 2  (  p - j ) } \phi \\
&- 2 \sum _ {
r = 1  } ^ { p - 1 } \left ( \sum _ { j = 1} ^ { r }   ( - 1 ) ^{ r - j }
\binom { r } { j }  . 2 j . x ^ { 2 j - 1 } \partial ^ { 2  ( r - j ) + 1  }  \Hop
^  {  p -  1 - r } \phi  \right ) + ( \text{Terms of order } \leq  2 p - 4).
    \end{align*}
    Here, terms of the form $x
^ { \alpha } \partial ^ { \beta  } \phi$ with $\alpha + \beta \leq 2p - 4$ have been referred to as `Terms of order $\leq 2p-4$'.
\end{lemma}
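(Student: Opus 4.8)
The plan is to induct on $p$, using the explicit form $\Hop = x^2-\partial^2$ recalled from Subsection~\ref{S:Hermite-Op} and treating $x^2$ (multiplication) and $\partial^2$ as non-commuting operators whose failure to commute is governed by the single relation $\partial^2 x^2 = x^2\partial^2 + 4x\partial + 2$, more generally $\partial^2 x^m = x^m\partial^2 + 2m\,x^{m-1}\partial + m(m-1)x^{m-2}$. Throughout I would track the \emph{total degree} $\alpha+\beta$ of a normal-ordered monomial $x^\alpha\partial^\beta$, using the elementary grading facts that normal-ordering a product (moving all $x$'s left of all $\partial$'s) never raises the total degree, while a commutator of two such monomials \emph{lowers} the total degree by at least two (the degree-$(d_1+d_2)$ leading terms of $AB$ and $BA$ coincide and cancel). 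It is convenient to abbreviate the claimed leading term by $M_p := \sum_{j=0}^p(-1)^{p-j}\binom pj x^{2j}\partial^{2(p-j)}$ and the inner operator by $P_r := \sum_{j=1}^r(-1)^{r-j}\binom rj 2j\, x^{2j-1}\partial^{2(r-j)+1}$, so that the assertion reads $\Hop^p = M_p - 2\sum_{r=1}^{p-1}P_r\Hop^{p-1-r} + (\text{terms of order}\le 2p-4)$.

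First I would record two purely algebraic facts. Pascal's rule gives the recursion $M_p = x^2 M_{p-1} - M_{p-1}\partial^2$, so the degree-$2p$ part of $\Hop\,M_{p-1} = x^2 M_{p-1} - \partial^2 M_{p-1}$ is exactly $M_p$, the remaining $-[\partial^2,M_{p-1}]$ having degree $\le 2p-2$. Secondly, applying the commutation relation termwise gives $[\partial^2, x^{2j}] = 4j\,x^{2j-1}\partial + 2j(2j-1)x^{2j-2}$, whence $[\partial^2, M_{p-1}] = 2P_{p-1} + (\text{order}\le 2p-4)$, the second (even-$x$-power) family being of degree exactly $2p-4$. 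Combining, $\Hop\,M_{p-1} = M_p - [\partial^2,M_{p-1}] = M_p - 2P_{p-1} + (\text{order}\le 2p-4)$; this already produces the $r=p-1$ summand of the correction.

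The inductive step reads $\Hop^p = (x^2-\partial^2)\Hop^{p-1}$, into which I substitute the hypothesis $\Hop^{p-1} = M_{p-1} - 2\sum_{r=1}^{p-2}P_r\Hop^{p-2-r} + (\text{order}\le 2p-6)$. The three resulting pieces are handled as follows: the $M_{p-1}$ piece gives $M_p - 2P_{p-1}$ modulo degree $\le 2p-4$ by the previous paragraph; multiplying the order-$\le 2p-6$ remainder by $\Hop$ (degree $2$) stays within order $\le 2p-4$; and for the correction piece one writes $\Hop\,P_r\Hop^{p-2-r} = P_r\Hop^{p-1-r} + [\Hop,P_r]\Hop^{p-2-r}$, where $[\Hop,P_r]$ has total degree at most $2r$ (the commutator drops two degrees), so $[\Hop,P_r]\Hop^{p-2-r}$ normal-orders into terms of order at most $2r+2(p-2-r)=2p-4$ and is absorbed. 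Hence $\Hop\,\big(-2\sum_{r=1}^{p-2}P_r\Hop^{p-2-r}\big) = -2\sum_{r=1}^{p-2}P_r\Hop^{p-1-r} + (\text{order}\le 2p-4)$, supplying exactly the $r=1,\dots,p-2$ summands. Adding the three contributions yields $\Hop^p = M_p - 2\sum_{r=1}^{p-1}P_r\Hop^{p-1-r} + (\text{order}\le 2p-4)$, as required. The base case $p=3$ I would verify by the direct computation of $\Hop^3 = (x^2-\partial^2)\Hop^2$ from $\Hop^2 = x^4 - 2x^2\partial^2 + \partial^4 - 4x\partial - 2$.

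The main obstacle is the bookkeeping of orders at the top two levels: everything hinges on showing that the only survivors at degree $2p-2$ are the mixed terms $P_r\Hop^{p-1-r}$, and that every commutator generated in the step genuinely falls to degree $\le 2p-4$ \emph{after} normal ordering. Two points need care: that the even family $2j(2j-1)x^{2j-2}\partial^{2(p-1-j)}$ in $[\partial^2,M_{p-1}]$ drops to degree $2p-4$ (so only $2P_{p-1}$ remains at degree $2p-2$), and that $[\Hop,P_r]\Hop^{p-2-r}$, despite containing the high-order factor $\Hop^{p-2-r}$, has normal-ordered total degree at most $2p-4$ precisely because the commutator $[\Hop,P_r]$ has already shed two degrees. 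Once the grading is arranged so that ``order $\le q$'' is stable under left multiplication by $x^2$ and by $\partial^2$ and closed under the commutators that arise, the induction closes with no further analytic input.
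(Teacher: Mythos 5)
Your proof is correct and follows essentially the same route as the paper's: induction on $p$ with $\Hop^{p} = (x^2 - \partial^2)\Hop^{p-1}$, Pascal's rule producing the leading family $M_p$, and the relation $[\partial^2, x^{2j}] = 4j\,x^{2j-1}\partial + 2j(2j-1)x^{2j-2}$ producing the correction $-2P_{p-1}$ while the even family drops to order $2p-4$. If anything, your treatment is slightly more careful than the paper's, which silently replaces $\Hop\bigl(x^{2j-1}\partial^{2(r-j)+1}\Hop^{k-1-r}\phi\bigr)$ by $x^{2j-1}\partial^{2(r-j)+1}\Hop^{k-r}\phi$ without recording the commutator terms $[\Hop, P_r]\Hop^{p-2-r}$ that you explicitly show have normal-ordered degree at most $2p-4$.
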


 We establish the Monotonicity inequality  
 involving smooth multipliers, under relevant sufficient conditions (see Theorem \ref{Mono-ineq-gen-functin}), with non-negative integer $p$ regularities. As this approach requires an iteration scheme, we first establish the result in $\norm[p]{\cdot}$ for the base cases $p = 0, 1, 2$ explicitly and then run the iteration.
\begin{theorem}\label{Mono-ineq-gen-functin}
    Let $\sigma,\; b$ be real valued smooth functions with bounded derivatives. Then, for $ p \in \Z_+$, the Monotonicity inequality \eqref{Monotoniticity-inequality-chap7} holds for the pair $(L^\ast, A^\ast)$ in $\|\cdot\|_p$.
 \end{theorem}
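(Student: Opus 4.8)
The plan is to prove, for each fixed $p \in \Z_+$, that the left-hand side of \eqref{Monotoniticity-inequality-chap7} can be rewritten as a finite sum of good terms in the sense of Definition \ref{good-term-chap7}, once its genuinely top-order contributions have been arranged to cancel; Lemma \ref{good-term-bnd-chap7} then bounds each surviving good term by a constant multiple of $\norm[p]{\phi}^2$, which is exactly the claim. All computations are carried out in $\inpr[0]{\cdot}{\cdot}$ via the identity $\norm[p]{\psi} = \norm[0]{\Hop^p\psi}$ of Proposition \ref{Hop-properties} (and its polarized form for inner products), so that
\[
2\inpr[p]{\phi}{L^\ast\phi} + \norm[p]{A^\ast\phi}^2 = 2\inpr[0]{\Hop^p\phi}{\Hop^p L^\ast\phi} + \inpr[0]{\Hop^p A^\ast\phi}{\Hop^p A^\ast\phi}.
\]

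First I would settle the base cases $p = 0, 1, 2$ by direct integration by parts in $\Sc_0(\R)$. For $p=0$ the mechanism is already transparent: writing $A^\ast\phi = -(\sigma'\phi + \sigma\,\partial\phi)$ and $L^\ast\phi = (\sigma'^2 + \sigma\sigma'')\phi + 2\sigma\sigma'\,\partial\phi + \tfrac12\sigma^2\,\partial^2\phi - b'\phi - b\,\partial\phi$, the top-order term $\int_\R \sigma^2(\partial\phi)^2$ appears with coefficient $-1$ from $2\inpr[0]{\phi}{L^\ast\phi}$ (after integrating $\int_\R\sigma^2\phi\,\partial^2\phi$ by parts) and with coefficient $+1$ from $\norm[0]{A^\ast\phi}^2$, so they cancel; integrating the remaining cross term $\int_\R(4\sigma\sigma'-2b)\phi\,\partial\phi$ by parts once more collapses everything to $\int_\R(\sigma'^2 - b')\phi^2 \leq C\norm[0]{\phi}^2$, with $C = \sup_x|\sigma'(x)^2 - b'(x)|$ finite by the bounded-derivative hypothesis. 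The cases $p=1,2$ follow the same template with more terms to track, each of which I would verify to be a good term at the relevant level.

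For $p \geq 3$ I would run the iteration built on Lemma \ref{H^P-lemma}: substituting the recurrence for $\Hop^p\phi$, $\Hop^p A^\ast\phi$, and $\Hop^p L^\ast\phi$ (and reapplying it to reduce every residual $\Hop^{p-1-r}$ down to the base levels $\Hop^0,\Hop^1,\Hop^2$) turns each inner product above into a finite sum of bilinear forms $\inpr[0]{\partial^{k_1}\sigma_1\, x^{\alpha_1}\partial^{\beta_1}\phi}{\partial^{k_2}\sigma_2\, x^{\alpha_2}\partial^{\beta_2}\phi}$, where $\sigma_1,\sigma_2$ are assembled from $\sigma$, $b$, and their derivatives, distributing products of multiplier factors across the two slots. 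The structural observation responsible for the whole argument, inherited from the base case, is that the supercritical part, namely the terms of total order exceeding $4p$, comes only from pairing the leading $(x^2-\partial^2)^p$-type part of $\Hop^p$ with the leading $\tfrac12\partial^2(\sigma^2\,\cdot)$ of $L^\ast$ and the leading $\partial(\sigma\,\cdot)$ of $A^\ast$; the factor-two and one-half weighting in the combination $2\inpr[p]{\phi}{L^\ast\phi} + \norm[p]{A^\ast\phi}^2$ is calibrated precisely so that, after integration by parts into a canonical form, these order-$(>4p)$ contributions cancel. Everything that survives then has total order at most $4p$, i.e. is an FSGT, and Lemma \ref{good-term-bnd-chap7} finishes the estimate.

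The main obstacle I anticipate is the order bookkeeping for the commutators. Because $\sigma$ and $b$ are general smooth multipliers, there is no Hermite recurrence available (as there was for affine coefficients in \cite{MR3331916}), so one cannot avoid the cascade of commutator terms produced when $\Hop^p$ is pushed past the multiplications and differentiations in $L^\ast$ and $A^\ast$, nor the products of multiplier derivatives (some of which grow) that these generate. The delicate point is to confirm that, after the leading cancellation and suitable further integrations by parts, every commutator term, together with the lower-order terms carried along by Lemma \ref{H^P-lemma}, can be put into good-term form with total order $\leq 4p$; here the convention that an undifferentiated $\sigma$ (which grows at most linearly) contributes one unit of order while each $\partial^k\sigma$ with $k\geq 1$ is bounded and contributes none is exactly what keeps the count tight, and the boundedness of all derivatives of $\sigma$ and $b$ is what guarantees the resulting bilinear forms have finite norm. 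Verifying that no order-$(4p+1)$ or higher term escapes cancellation, uniformly down the recurrence, is the technical heart of the proof.
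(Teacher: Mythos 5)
Your scaffolding coincides with the paper's — base cases $p=0,1,2$ by direct computation (your $p=0$ argument is exactly the paper's Lemma \ref{Mono-ineq-gen-functin-P012}), reduction to $\inpr[0]{\cdot}{\cdot}$ through $\norm[p]{\psi}=\norm[0]{\Hop^p\psi}$, the recurrence of Lemma \ref{H^P-lemma}, and Lemma \ref{good-term-bnd-chap7} to close — but there is a genuine gap at precisely the point you defer as ``the technical heart''. Your structural claim, that the supercritical (total order $>4p$) contributions come only from leading-part pairings and cancel by the $2$ versus $\tfrac12$ calibration, is already inadequate at $p=1$: that calibration does kill the pure top block $\norm[0]{\sigma x^2\partial\phi}^2+\norm[0]{\sigma\partial^3\phi}^2+2\inpr[0]{\sigma^2\partial^2\phi}{x^2\partial^2\phi}$, but the computation also generates the supercritical families $\inpr[0]{\sigma\partial^2\sigma\,\phi}{x^4\phi}$, $\inpr[0]{\sigma\partial^2\sigma\,\partial\phi}{x^2\partial\phi}$ and $\inpr[0]{\sigma\partial^2\sigma\,\partial^2\phi}{\partial^2\phi}$, each of total order $5>4$ because the undifferentiated $\sigma$ costs one unit. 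These involve $\partial^2\sigma$, arise from commutators rather than leading-times-leading pairings, and enter with coefficients $-1,-8,-9$ from the $A^\ast$ side against $+1,+8,+9$ from the $L^\ast$ side; their cancellation is a nontrivial numerical identity, which at general $p$ becomes the binomial identities behind the paper's $S_5'+S_6'=0$ and $S_5''+S_6''=0$, all four families there having the common shape $\inpr[0]{x^{2i+4}\partial^{2p-i}\phi}{\sigma\partial^2\sigma\,\partial^{2p-i}\phi}$. Since you assert these cancellations rather than verify them, the proposal stops where the proof begins.

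The device that makes this verification finite — and which your one-shot expansion of $\Hop^p$ lacks — is the paper's induction $p\to p+1$, obtained by replacing $\phi$ with $\Hop\phi$:
\begin{align*}
2\inpr[p+1]{\phi}{L^\ast\phi}+\norm[p+1]{A^\ast\phi}^2 ={}& \left(2\inpr[p]{\Hop\phi}{L^\ast\Hop\phi}+\norm[p]{A^\ast\Hop\phi}^2\right) + 2\inpr[p]{\Hop\phi}{\left(\Hop L^\ast-L^\ast\Hop\right)\phi}\\
&+\norm[p]{\left(\Hop A^\ast-A^\ast\Hop\right)\phi}^2 + 2\inpr[p]{A^\ast\Hop\phi}{\left(\Hop A^\ast-A^\ast\Hop\right)\phi}.
\end{align*}
The induction hypothesis absorbs the bracketed term (legitimate since $\Hop\phi\in\Sc(\R)$), and the commutators are computed in closed form modulo good terms, namely $(\Hop A^\ast-A^\ast\Hop)\phi = 2\partial\sigma\,\partial^2\phi-2x\sigma\phi+3\partial^2\sigma\,\partial\phi+GT$ and $(\Hop L^\ast-L^\ast\Hop)\phi = 2\left(\sigma^2x\partial\phi-\sigma\partial\sigma\,\partial^3\phi\right)-5\sigma\partial^2\sigma\,\partial^2\phi+GT$, so each step creates only finitely many supercritical terms, and Lemma \ref{H^P-lemma} is invoked only to expand $\Hop^{2p}$ inside those specific terms. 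Note also that in the commutators the $b$-contributions are already good terms, whereas your global expansion would additionally produce supercritical $b$-terms needing separate integrations by parts. To repair your plan, either adopt this commutator decomposition, or supply a uniform combinatorial proof that every order-$(>4p)$ term in the full expansion of $2\inpr[0]{\Hop^p\phi}{\Hop^pL^\ast\phi}+\norm[0]{\Hop^pA^\ast\phi}^2$ cancels; the latter is exactly the unbounded bookkeeping you flagged without resolving.
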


 The results obtained here closely parallels results in \cite{alt-mono}.

\subsection{Approach using an interpolation argument via the Three Lines Lemma}
 One of the shortcoming of the Approach II, discussed above, is the need to prove the result in some base case. However, a direct proof for fraction values of $p$, say $p = \frac{1}{2}$, seems  difficult to obtain, based on the current state-of-the-art. As such, it does not seem feasible to directly extend Approach II for non-integer $p$ regularities. We develop a new method (see Theorem \ref{proof-by-3-line lemma in affine case}) using interpolation via the well-known Three Lines Lemma (see Theorem \ref{The three lines lemma}) by which the inequality can be extended from non-negative integer values of $p$ to positive fractional values of $p$.
 
 \begin{theorem}[Three Lines Lemma \cite{MR304972}]\label{The three lines lemma}
Suppose $ F $ is a bounded continuous complex-valued function on the closed strip $
S = \left \{x + i y  = z \in \C : \; 0 \leq x \leq 1 \right \} $ that is
analytic in the interior of $ S $. If $ \left | F (  i y ) \right | \leq m _ 0 $
and $ \left | F ( 1 + i y ) \right | \leq m _ 1 $ for all $ y \in \R $,  then $
\left | F ( x + i y ) \right | \leq m _ 0 ^ { 1 - x } m  _ 1 ^ x  $ for all $ z
=  x + i y  \in S $.
\end{theorem}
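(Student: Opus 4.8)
The plan is to reduce the inequality to a single application of the maximum modulus principle after renormalizing $F$ so that the desired bound becomes the constant $1$ on both boundary lines. First I would dispose of the degenerate case by replacing $m_0, m_1$ with $m_0+\delta, m_1+\delta$ for $\delta > 0$; proving the estimate with these strictly positive constants and then letting $\delta \to 0$ recovers the general statement, so I may assume $m_0, m_1 > 0$ throughout.

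With $m_0, m_1 > 0$ I would introduce the auxiliary function
\[
G(z) := \frac{F(z)}{m_0^{\,1-z}\, m_1^{\,z}}, \qquad m_0^{\,1-z}\,m_1^{\,z} := m_0\, e^{z\log(m_1/m_0)},
\]
whose denominator is a non-vanishing entire function. Since $\lvert m_0^{\,1-z} m_1^{\,z}\rvert = m_0^{\,1-x} m_1^{\,x}$ depends only on $x = \operatorname{Re} z$ and stays between $\min(m_0,m_1)$ and $\max(m_0,m_1)$ on $S$, the function $G$ is analytic in the interior of $S$, continuous and bounded on $S$, and on the boundary it satisfies $\lvert G(iy)\rvert \le 1$ and $\lvert G(1+iy)\rvert \le 1$ for all $y \in \R$. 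The conclusion $\lvert F(x+iy)\rvert \le m_0^{\,1-x} m_1^{\,x}$ is then exactly the assertion $\lvert G(z)\rvert \le 1$ on $S$.

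The main obstacle is that $S$ is unbounded, so the maximum modulus principle cannot be applied directly; indeed a bounded analytic function on a strip need not be controlled by its boundary values without a growth hypothesis (a Phragm\'en--Lindel\"of phenomenon). To circumvent this I would insert a Gaussian-type decay factor: writing $M := \sup_{z\in S}\lvert G(z)\rvert < \infty$, for each $\epsilon > 0$ set $G_\epsilon(z) := G(z)\, e^{\epsilon z(z-1)}$. A direct computation gives $\lvert e^{\epsilon z(z-1)}\rvert = e^{\epsilon(x^2 - x - y^2)}$, and since $x(x-1) \le 0$ for $0 \le x \le 1$ this factor is at most $e^{-\epsilon y^2} \le 1$ on all of $S$. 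Hence $\lvert G_\epsilon\rvert \le 1$ on both boundary lines, while on the full strip $\lvert G_\epsilon(z)\rvert \le M\, e^{-\epsilon y^2} \to 0$ as $\lvert y\rvert \to \infty$.

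Finally I would run the maximum modulus principle on the rectangles $R_Y := \{\, z : 0 \le x \le 1,\ \lvert y\rvert \le Y\,\}$: choosing $Y$ so large that $M e^{-\epsilon Y^2} \le 1$ forces $\lvert G_\epsilon\rvert \le 1$ on the two horizontal edges, while $\lvert G_\epsilon\rvert \le 1$ on the two vertical edges holds by the boundary estimates, so $\lvert G_\epsilon\rvert \le 1$ on $R_Y$. Letting $Y \to \infty$ gives $\lvert G_\epsilon\rvert \le 1$ on all of $S$, and fixing $z$ while letting $\epsilon \to 0$ yields $e^{\epsilon z(z-1)} \to 1$, hence $\lvert G(z)\rvert \le 1$. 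Unwinding the definition of $G$ gives the claimed bound $\lvert F(x+iy)\rvert \le m_0^{\,1-x} m_1^{\,x}$.
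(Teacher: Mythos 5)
Your proof is correct and complete: the normalization $G = F/(m_0^{1-z}m_1^{z})$ (with the $\delta$-perturbation to ensure $m_0,m_1>0$), the Gaussian damping factor $e^{\epsilon z(z-1)}$ to defeat the unboundedness of the strip, the maximum modulus principle on truncated rectangles, and the limits $Y\to\infty$, $\epsilon\to 0$ are all carried out accurately, and your boundedness hypothesis on $F$ is used exactly where needed (to define $M$ and get decay of $G_\epsilon$ as $\lvert y\rvert\to\infty$). Note, however, that the paper itself gives no proof of this statement --- it is quoted as a classical result from the cited reference --- and your argument is precisely the standard Phragm\'en--Lindel\"of-type proof found in the literature, so there is nothing in the paper to contrast it with.
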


\begin{theorem}\label{proof-by-3-line lemma in affine case}
  Fix $\alpha, \beta, \gamma, \delta \in \R$ and set $\sigma  ( x )= \alpha x + \beta, \forall x \in \R$  and $b ( x ) = \gamma x + \delta, \forall x \in \R$.
Then, for all non-negative non-integer values of $p$, the Monotonicity inequality \eqref{Monotoniticity-inequality-chap7} holds for the pair $(L^\ast, A^\ast)$ in $\|\cdot\|_p$. 
  \end{theorem}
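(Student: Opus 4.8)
The plan is to recast the Monotonicity inequality \eqref{Monotoniticity-inequality-chap7} at a given level $p$ as an operator bound on $\Sc_0(\R;\C)$, and then to interpolate this bound from integer values of $p$ to fractional ones through the Three Lines Lemma. Writing $\psi = \Hop^p\phi$ and using Proposition \ref{Hop-properties}, I introduce the operators $A_p := \Hop^p A^\ast\Hop^{-p}$ and $L_p := \Hop^p L^\ast\Hop^{-p}$, which map $\Sc(\R)$ into itself, and let $\dagger$ denote the adjoint in $\Sc_0(\R;\C)$. A direct computation gives
\[
2\inpr[p]{\phi}{L^\ast\phi} + \norm[p]{A^\ast\phi}^2 = \inpr[0]{N_p\psi}{\psi}, \qquad N_p := A_p^\dagger A_p + L_p + L_p^\dagger .
\]
Since $N_p$ is formally self-adjoint and commutes with complex conjugation, the inequality \eqref{Monotoniticity-inequality-chap7} at level $p$ is equivalent to $\inpr[0]{N_p\psi}{\psi}\leq C\norm[0]{\psi}^2$ for all $\psi\in\Sc(\R)$. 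I will deduce the fractional case $p = n+\theta$, with $n\in\Z_+$ and $\theta\in(0,1)$, from the two integer endpoints $p = n$ and $p = n+1$, which are covered by Theorem \ref{Mono-ineq-gen-functin} since affine functions have bounded derivatives.

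The substantive ingredient is that, in the affine case, for each $n\in\Z_+$ the operator $N_n$ extends to a \emph{bounded} self-adjoint operator on $\Sc_0(\R;\C)$. This is genuinely stronger than the one-sided Monotonicity inequality: it is the two-sided estimate $\left|\inpr[0]{N_n\psi}{\psi}\right|\leq M_n\norm[0]{\psi}^2$, that is $\norm{N_n}\leq M_n<\infty$ for the operator norm. For affine $\sigma$ and $b$ this follows from the bounded-operator analysis of Section \ref{S2:proofs} (cf.\ Lemma \ref{bndop1}): the top-order contributions in $A_n^\dagger A_n + L_n + L_n^\dagger$ cancel, and the surviving quadratic form is a finite sum of good terms, each bounded in absolute value by Lemma \ref{good-term-bnd-chap7}. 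Securing this two-sided bound --- rather than mere positivity of $CI - N_n$ or a one-sided estimate, neither of which interpolates --- is the main obstacle, and it is the only place where the affine hypothesis is used.

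Next I extend to complex parameters: for $z\in\C$ set $A_z := \Hop^z A^\ast\Hop^{-z}$, $L_z := \Hop^z L^\ast\Hop^{-z}$ and $N_z := A_z^\dagger A_z + L_z + L_z^\dagger$ on $\Sc(\R)$. For fixed $\psi,\chi\in\Sc(\R)$ the map $z\mapsto\inpr[0]{N_z\psi}{\chi}$ is analytic, because $z\mapsto\Hop^z$ is. The decisive point is that powers of $\Hop$ commute and $\Hop^{iy}$ is unitary on $\Sc_0(\R;\C)$ with $(\Hop^{iy})^\dagger = \Hop^{-iy}$; consequently $A_{x+iy} = \Hop^{iy}A_x\Hop^{-iy}$, $L_{x+iy} = \Hop^{iy}L_x\Hop^{-iy}$, and therefore
\[
N_{x+iy} = \Hop^{iy}\,N_x\,\Hop^{-iy}, \qquad x, y\in\R .
\]
Thus $N_{n+iy}$ is a unitary conjugate of $N_n$, so that $\norm{N_{n+iy}} = \norm{N_n} = M_n$ for every $y\in\R$: the bounds on the two vertical boundary lines are automatically uniform in $y$, with no further estimation.

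It remains to interpolate. Fix $\psi\in\Sc(\R)$ and set $F(z) := \inpr[0]{N_{n+z}\psi}{\psi}$ on the strip $S = \{0\leq\operatorname{Re}z\leq 1\}$. Expanding $\psi$ in Hermite functions, the rapid decay of its coefficients together with the eigenvalue bound $|\lambda^{z}| = \lambda^{\operatorname{Re}z}$ for $\Hop$ shows that $F$ is bounded and continuous on $S$ and analytic in its interior. By the conjugation identity, $|F(iy)| = \left|\inpr[0]{N_n\Hop^{-iy}\psi}{\Hop^{-iy}\psi}\right|\leq M_n\norm[0]{\psi}^2$ and, likewise, $|F(1+iy)|\leq M_{n+1}\norm[0]{\psi}^2$. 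The Three Lines Lemma (Theorem \ref{The three lines lemma}) then gives $|F(\theta)|\leq M_n^{1-\theta}M_{n+1}^{\theta}\norm[0]{\psi}^2$. Since $N_{n+\theta}$ is self-adjoint, $F(\theta) = \inpr[0]{N_{n+\theta}\psi}{\psi}$ is real, whence $\inpr[0]{N_{n+\theta}\psi}{\psi}\leq M_n^{1-\theta}M_{n+1}^{\theta}\norm[0]{\psi}^2$. Choosing $\psi = \Hop^{n+\theta}\phi$ and invoking the reformulation of the first paragraph yields \eqref{Monotoniticity-inequality-chap7} at $p = n+\theta$ with $C = M_n^{1-\theta}M_{n+1}^{\theta}$. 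As every non-negative non-integer $p$ lies in a unique interval $(n, n+1)$ with $n\in\Z_+$, the theorem follows.
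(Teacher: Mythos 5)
Your reduction to the operators $N_p = A_p^\dagger A_p + L_p + L_p^\dagger$ is sound, the two-sided bound $\norm{N_n}\leq M_n$ in the affine case is a correct (and nicely articulated) strengthening of the one-sided integer-case inequality, and the identity $N_{x+iy}=\Hop^{iy}N_x\Hop^{-iy}$ is also right. But the proof has a fatal gap at the interpolation step: the map $z\mapsto \inpr[0,\C]{N_z\psi}{\chi}$ is \emph{not} analytic. The operator $N_z$ contains the adjoints $A_z^\dagger$ and $L_z^\dagger$, and $A_z^\dagger = \Hop^{-\bar z}(A^\ast)^\dagger \Hop^{\bar z}$ depends on $\bar z$, i.e.\ anti-analytically on $z$; "analytic because $z\mapsto \Hop^z$ is" fails precisely here. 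Your own observations make the failure visible: since $N_z$ is formally self-adjoint for \emph{every} $z\in\C$, your function
\[
F(z) = \inpr[0,\C]{N_{n+z}\psi}{\psi} = \norm[0,\C]{A_{n+z}\psi}^2 + 2\,\mathrm{Re}\inpr[0,\C]{L_{n+z}\psi}{\psi}
\]
is real-valued on the entire strip, and a nonconstant real-valued function cannot be analytic (Cauchy--Riemann); so the Three Lines Lemma simply does not apply to your $F$. That you needed $F(\theta)$ to be real at the end, by the same self-adjointness argument that makes $F(iy)$ real as well, was the red flag.

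The paper circumvents exactly this by building the conjugation asymmetrically into the two slots: it takes
\[
F(z) = 2\inpr[0,\C]{\phi}{\Hop^{\bar z}L^\ast\Hop^{-\bar z}\phi} + \inpr[0,\C]{\Hop^{z}A^\ast\Hop^{-z}\phi}{\Hop^{\bar z}A^\ast\Hop^{-\bar z}\phi},
\]
so that, after the conjugation built into the second (conjugate-linear) slot, the expression becomes a \emph{holomorphic bilinear} pairing (for real $\phi$, the second term is $\int \left(\Hop^{z}A^\ast\Hop^{-z}\phi\right)^2 dx$, with no complex conjugate), which is analytic in $z$ and agrees with the monotonicity form only at real $z$ --- which is all that is needed, via $z=p$ and the substitution $\phi\mapsto\Hop^p\phi$ that you also use. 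The price of this fix is that the boundary values are no longer quadratic forms of a unitary conjugate of a fixed bounded $N_n$: at $z=iy$ one pairs $A_{iy}\phi$ against itself bilinearly, not sesquilinearly, so your "bounds on the vertical lines are automatic by unitarity" does not transfer. Instead, the paper cancels the top-order parts by hand on each boundary line, reducing, e.g., $F(iy)$ to $\inpr[0,\C]{\left(\Hop^{2iy}A^\ast\Hop^{-2iy}-A^\ast\right)\Hop^{iy}\phi}{A^\ast\Hop^{iy}\phi}$, and then invokes the bounded-perturbation estimates (Lemmas \ref{bndop1} and \ref{bndblm1}) to bound this by $C\norm[0,\C]{\phi}^2$. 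So your opening reduction can be kept, but the interpolating function must be rebuilt in the paper's twisted form, and the boundary estimates must come from the perturbation lemmas rather than unitary conjugation; as written, the interpolation step collapses.
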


 At the present stage of this approach, we are able to handle affine multipliers $\sigma$ and $b$; but we hope to extend the methodology to incorporate more general multipliers  in a future work. Theorem \ref{proof-by-3-line lemma in affine case} yields an alternative proof of Theorem \ref{Mono-ineq-affine-Form}, for non-negative non-integer $p$ regularities. This approach has been inspired by the usage of Three Lines Lemma in the proof of \cite[Theorem 2.1]{MR1999259}.

A brief outline of the article is as follows. In Section \ref{S2:description}, we explore the countably Hilbertian topology for Schwartz class functions, focusing on the real-valued case in Subsection \ref{S2:topology-Real} and the complex-valued case in Subsection \ref{S2:topology-complex}. We also recall standard results on the Hermite operator in Subsection \ref{S:Hermite-Op}. Section \ref{S2:proofs} is devoted to the proofs. Subsection \ref{Preliminary Results} contains some preliminary results and estimates. Proofs related to the three approaches I, II and III are described in Subsections \ref{First approach}, \ref{second approach} and \ref{third approach} respectively.

\section{Topology on the Schwartz space}\label{S2:description}

\subsection{ Countably Hilbertian Topology on the Real valued Schwartz class functions}\label{S2:topology-Real}
Let $\Sc(\R)$ be the space of smooth rapidly decreasing real-valued functions on $ \R $ with the topology given by L. Schwartz (see \cite{MR1681462, MR2296978, MR771478, MR1157815} and the references therein). This space, also known as the Schwartz space, is defined as follows:

\[
\Sc(\R) = \left\{ \varphi \in C^\infty(\R) : \forall k \geq 1, \max_{|\alpha| \leq k} \sup_{x \in \R} (1 + |x|^2)^k |\partial^\alpha \varphi(x)| < \infty \right\}.
\]

 $\Sc(\R)$ can be represented as the intersection of the Hermite-Sobolev spaces $\Sc_p(\R)$ for $p \in \R$, that is, $\Sc(\R) = \bigcap_{p \in \R} \Sc_p(\R)$. The Hermite-Sobolev spaces $\Sc_p(\R)$, for $p \in \R$, are defined as the completions of the Schwartz space $\Sc(\R)$ with respect to the norm $\| \cdot \|_p$ related to the following  pre-inner products:
\[
\langle f, g \rangle_p = \sum_{k = 0}^\infty  (2 k + 1)^{2p} \langle f, h_k \rangle_0 \langle g, h_k \rangle_0, \quad f, g \in \Sc(\R).
\]
Here, $\langle \cdot, \cdot \rangle_0$ represents the standard inner product in $\mathcal{L}^2(\R, dx)$. In this framework, $\{ h_k \}_{k = 0}^\infty$ denotes an orthonormal basis for $\mathcal{L}^2(\R, dx)$, consisting of Hermite functions.  The Hermite functions are given by:

\[
h_k(t) = (2^k k! \sqrt{\pi})^{-\frac{1}{2}} \exp{\left(-\frac{t^2}{2}\right)} H_k(t),
\]
where $H_k(t)$ denotes the Hermite polynomials.

 It is noteworthy that the dual space $\Sc_p^\prime(\R)$ is isometrically isomorphic to $\Sc_{-p}(\R)$ for $p \geq 0$. For $\phi \in \Sc_p(\R)$ and $\psi \in \Sc^\prime(\R)$, the duality pairing is denoted by $\langle \psi, \phi \rangle$. Furthermore, the following identities are established:
\[
\Sc^\prime(\R) = \bigcup_{p > 0} \left(\Sc_{-p}(\R), \| \cdot \|_{-p}\right),
\]

and

\[
\Sc_0(\R) = \mathcal{L}^2(\R).
\]

The fundamental inclusions among these spaces are as follows: for $0 < q < p$,

\[
\Sc(\R) \subset \Sc_p(\R) \subset \Sc_q(\R) \subset \mathcal{L}^2(\R) = \Sc_0(\R) \subset \Sc_{-q}(\R) \subset \Sc_{-p}(\R) \subset \Sc^\prime(\R).
\]

Consider the (partial) derivative operators $\partial: \Sc(\R) \to \Sc(\R)$. This operators extend to the dual space as $\partial: \Sc'(\R) \to \Sc'(\R)$, where for $\psi_1 \in \Sc'(\R)$, the extension is defined by:

\[
\langle \partial \psi_1, \psi_2 \rangle := -\langle \psi_1, \partial \psi_2 \rangle, \quad \text{for all } \psi_2 \in \Sc(\R).
\]

It is well-known that the operator $\partial$ maps $\Sc_p(\R)$ to $\Sc_{p-1}(\R)$, and the second-order derivatives $\partial^2$ map $\Sc_p(\R)$ to $\Sc_{p-1}(\R)$. These operators are bounded linear operators.

In our analysis, we use the following fact: if $f: \R \to \R$ is a smooth function with bounded derivatives (i.e., $f$ grows linearly), then the multiplication operator $M_f: \Sc(\R) \to \Sc(\R)$ defined by $M_f(\phi) := f \phi$ for all $\phi \in \Sc(\R)$ extends to $\Sc'(\R)$ as follows:
\[
\langle M_f \psi_1, \psi_2 \rangle := \langle \psi_1, M_f \psi_2 \rangle, \quad \text{for all } \psi_1 \in \Sc'(\R), \; \psi_2 \in \Sc(\R).
\]
For ease of notations, we may use $f$ instead of $M_f$. We also consider multiplication by  $x^\alpha,\;\alpha\in \N$. The operator $x^\alpha: \Sc(\R) \to \Sc(\R)$ is defined by $(x^\alpha \phi)(y) := x^\alpha \phi(y)$ for all $\phi \in \Sc(\R)$ and $y \in \R$. This operator extends by duality to $\Sc'(\R)$ as follows:
\[
\langle x^\alpha \psi_1, \psi_2 \rangle := \langle \psi_1, x^\alpha \psi_2 \rangle, \quad \text{for all } \psi_1 \in \Sc'(\R), \; \psi_2 \in \Sc(\R).
\]

 The distributional derivative operators $\partial: \Sc_p (\R)\to \Sc_{p}(\R)$ are densely defined closed linear unbounded operators. The following relations is well known \cite[Chapter 1]{MR1215939},
 \begin{equation}\label{derivative of h_n}
 \partial h _ n = \sqrt{\frac{n}{2}} h_{n-1} - \sqrt{\frac{n+1}{2}}h _ {n+1}\quad\text{and}\quad x h _ n = \sqrt{\frac{n}{2}} h_{n-1} + \sqrt{\frac{n+1}{2}}h _ {n+1}, \; \forall \;  n \geq 0
\end{equation}
  Consequently, we have $\partial: \Sc_p(\R) \to \Sc_{p-\frac{1}{2}}(\R)$ are bounded linear operators.
 
 Let $f: \R \to \R$ be a smooth function with bounded derivatives. Using above  relations, weak convergence and duality argument, it can be shown that the multiplication operator $f: \Sc_p (\R)\to \Sc_{p-1}(\R)$ are bounded linear operators (see \cite[Proposition 3.2]{MR2373102}). Using this observation, it follows that for $\phi \in \Sc_p(\R)$, $A^\ast(\phi)$ and $L^\ast(\phi)$ belong to $\Sc_{p-2}(\R)$. In fact,
\[
\| A ^\ast\phi \|_{p-2}^2 \leq C_1 \left\| \phi \right\|_p^2 \quad \text{and}\quad
\left\| L^\ast(\phi) \right\|_{p-2} \leq C_2 \left\| \phi \right\|_p,
\]
for some positive constants $C_1$ and $C_2$ (see \cite{MR2373102}).

\subsection{Countably Hilbertian Topology on the Complex valued Schwartz class functions}\label{S2:topology-complex}

We start by recalling the space of Complex-valued Schwartz class functions
$\Sc(\R; \C)$ \[
\Sc(\R^d; \mathbb{C}) = \left\{ \varphi \in C^\infty(\R^d; \mathbb{C}) : \forall k \geq 1, \max_{|\alpha| \leq k} \sup_{x \in \R^d} (1 + |x|^2)^k |\partial^\alpha \varphi(x)| < \infty \right\},
\]
and its dual $\Sc^\prime(\R; \C)$. A
countably Hilbertian topology on $\Sc(\R; \C)$ can be described (\cite{MR771478, MR1215939}), similar to the manner enacted for $\Sc(\R)$. For $p \in \R$, consider the norms $\| \cdot \|_{p, \C}$,
defined by the inner products
\[\inpr[p, \C]{f}{g}  = \sum_{n = 0}^\infty (2 n + d) ^ {2p}
\inpr[\mathcal{L}^2(\R; \C)]{f}{h _ n} \overline{\inpr[\mathcal{L}^2(\R;
\C)]{g}{h _ n}}, \; \forall f , g \in \Sc(\R; \C).\]
Here, the separable Hilbert spaces, the Hermite-Sobolev
spaces $\Sc_p(\R; \C), p \in \R$, are defined as the completion of $\Sc(\R; \C)$
in
$\norm[p, \C]{\cdot}$. Note that the dual space $\Sc_p^\prime(\R; \C)$ is
isometrically isomorphic with $\Sc_{-p}(\R; \C)$ for $p\geq 0$. For $\phi \in
\Sc(\R; \C)$ and $\psi \in \Sc^\prime(\R; \C)$, we write the duality action by
$\inpr{\psi}{\phi}$. We also have $\Sc(\R; \C) =
\bigcap_{p}(\Sc_p(\R; \C),\norm[p, \C]{\cdot}), \Sc^\prime(\R; \C) =
\bigcup_{p>0}(\Sc_{-p}(\R; \C),\norm[-p, \C]{\cdot})$ and $\Sc_0(\R; \C) =
\mathcal{L}^2(\R; \C)$. The following basic relations hold for the $\Sc_p(\R;
\C)$
spaces: for $0<q<p$, \[\Sc(\R; \C) \subset \Sc_p(\R; \C) \subset \Sc_q(\R; \C)
\subset
\mathcal L^2(\R; \C) = \Sc_0(\R; \C) \subset \Sc_{-q}(\R; \C)\subset
\Sc_{-p}(\R; \C)\subset \Sc^\prime(\R; \C).\]

Moreover, the following natural relations also hold:
\[\Sc(\R) \subset \Sc(\R; \C), \quad \Sc^\prime(\R) \subset \Sc^\prime(\R;
\C) \text{ and } \Sc_p(\R) \subset \Sc_p(\R; \C), \forall p \in \R,\]
with
\begin{equation}\label{real-complex-inner-pr-consistent}
  \inpr[p]{\phi}{\psi} = \inpr[p, \C]{\phi}{\psi}, \forall \phi, \psi \in
\Sc_p(\R).  
\end{equation}

Consider the derivative map denoted by $\partial:\Sc(\R; \C) \to
\Sc(\R; \C)$. This map extends by duality to
the distributional derivative operator $\partial: \Sc^\prime(\R; \C) \to \Sc^\prime(\R; \C)$ as follows: for
$\psi \in
\Sc'(\R; \C)$,
\[\inpr{\partial \psi}{\phi} := -\inpr{\psi}{\partial \phi}, \; \forall
\phi \in \Sc(\R; \C).\]
The regularity properties of $\partial$ remain the same as in the real-valued case. The analysis of multiplication of smooth functions with bounded derivatives also remain the same. We shall also use the multiplication by monomials $x^k: \Sc(\R; \C) \to \Sc(\R; \C), k \in \N$ in our arguments. To avoid repetition of the discussion in the real-valued case, we do not state the results again for the Complex-valued functions case. 

\subsection{Hermite Operator}\label{S:Hermite-Op}

We first recall well-known facts about the Hermite operator $\Hop$ from \cite[Chapter 1]{MR1215939} and \cite[Section 3]{MR1999259}. Define $\Hop : \Sc(\R) \to \Sc(\R)$ by 
\[(\Hop \phi)(x) := (x^2 - \partial^2) \phi (x), \forall \phi \in \Sc(\R), x \in \R.\]

\begin{proposition}[Properties of the Hermite operator, {\cite[Chapter 1]{MR1215939}} and {\cite[Section 3]{MR1999259}}]\label{Hop-properties}
We list a few facts about the Hermite operator.
\begin{enumerate}[label=(\roman*)]
\item $\Hop h_n = (2 n + 1) h_n$ for all $n \in \Z_+$.

\item Since $\{h_n : n \in \Z_+\}$ forms an orthonormal basis for $\mathcal{L}^2(\R)$, the above definition of $\Hop$ on $\Sc(\R)$ extends to $\Hop$ on $\mathcal{L}^2(\R)$ as a densely defined, closed, linear, positive operator. Using similar arguments, $\Hop$ extends linearly to $\Sc_p(\R), p \in \R$ and to $\Sc^\prime(\R)$.

\item For $p \in \R$, $p$-th power of $\Hop$ can be defined on first $\Sc(\R)$, and then extended to $\Sc^\prime(\R)$ as follows:
\[\Hop^p \phi := \sum_{n = 0}^\infty  (2n + 1)^{p} \inpr{\phi}{h_n} h_n.\]

\item For any $p, q \in \R$, $\|\Hop^p \phi\|_{q - p} = \|\phi\|_q, \forall \phi \in \Sc(\R)$ and hence, $\Hop^p: \Sc_q(\R) \to \Sc_{q - p}(\R)$ is a linear isometry. Moreover, this linear map is onto.

\item For any $p\in \R$, $\|\Hop^p \phi\|_{0} = \|\phi\|_p, \forall \phi \in \Sc_p(\R)$ and $\Sc_p(\R) = \Hop^{-p} \mathcal{L}^2(\R)$.
\end{enumerate}

\end{proposition}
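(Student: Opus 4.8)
The plan is to dispatch the five items in order, noting that (ii)--(v) are essentially bookkeeping with the orthonormal eigenbasis $\{h_n\}$ once the eigenvalue relation (i) is in hand; consequently the only genuine computation sits in (i), which I regard as the main (if modest) obstacle. For (i) I would start from the recurrence relations \eqref{derivative of h_n} and apply each of $x$ and $\partial$ twice. Abbreviating $a_n = \sqrt{n/2}$ and $b_n = \sqrt{(n+1)/2}$, one expresses both $x^2 h_n$ and $\partial^2 h_n$ as linear combinations of $h_{n-2}, h_n, h_{n+2}$; the $h_{n\pm 2}$ coefficients coincide in the two expansions and therefore cancel in the difference $x^2 h_n - \partial^2 h_n$, while the diagonal coefficients add. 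A short bookkeeping gives the diagonal coefficient $2(a_n b_{n-1} + b_n a_{n+1}) = 2\big(\tfrac{n}{2} + \tfrac{n+1}{2}\big) = 2n+1$, whence $\Hop h_n = (x^2 - \partial^2) h_n = (2n+1) h_n$. This is the sole point where the explicit structure of the Hermite functions is used.

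For (ii), since $\{h_n\}$ is an orthonormal basis of $\mathcal{L}^2(\R)$ consisting of eigenvectors of $\Hop$ with eigenvalues $2n+1 \geq 1$, I would define the $\mathcal{L}^2$-extension on the natural domain $\{f \in \mathcal{L}^2(\R) : \sum_n (2n+1)^2 |\inpr[0]{f}{h_n}|^2 < \infty\}$, which contains $\Sc(\R)$ and is hence dense. Self-adjointness, and therefore closedness, is immediate from the diagonal action in the eigenbasis, and positivity follows from $\inpr[0]{\Hop f}{f} = \sum_n (2n+1)|\inpr[0]{f}{h_n}|^2 \geq 0$. The asserted extensions to $\Sc_p(\R)$, $p \in \R$, and to $\Sc^\prime(\R)$ are then read off from the same coefficientwise description.

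For (iii)--(v) I would argue entirely at the level of Hermite coefficients. On $\Sc(\R)$ the coefficients $\inpr[0]{\phi}{h_n}$ decay faster than any power of $n$, so multiplication by $(2n+1)^p$ preserves rapid decay and $\Hop^p\phi \in \Sc(\R)$; for $\psi \in \Sc_{-q}(\R)$ with $q>0$ the coefficients grow at most polynomially, and the identity $\norm[-q-p]{\Hop^p\psi}^2 = \sum_n (2n+1)^{-2q}|\inpr{\psi}{h_n}|^2 = \norm[-q]{\psi}^2$ shows $\Hop^p$ sends $\Sc_{-q}(\R)$ into $\Sc_{-q-p}(\R) \subset \Sc^\prime(\R)$, giving (iii). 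Using $\inpr[0]{\Hop^p\phi}{h_n} = (2n+1)^p \inpr[0]{\phi}{h_n}$ (orthonormality), a one-line series computation yields $\norm[q-p]{\Hop^p\phi}^2 = \sum_n (2n+1)^{2q}|\inpr[0]{\phi}{h_n}|^2 = \norm[q]{\phi}^2$, which is the isometry of (iv); surjectivity is then clear since $\Hop^{-p}$ is a two-sided inverse carrying $\Sc_{q-p}(\R)$ back onto $\Sc_q(\R)$. Finally, (v) is the special case $q = p$ of this isometry, together with the case $q=0$ applied to $\Hop^{-p}$, which identifies $\Sc_p(\R) = \Hop^{-p}\mathcal{L}^2(\R)$. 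The extension steps in (ii)--(iii) are routine once the coefficient growth and decay conditions are checked, so the computation in (i) remains the only substantive point.
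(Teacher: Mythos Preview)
Your argument is correct. Note, however, that the paper does not supply its own proof of this proposition: it is stated as a list of well-known facts recalled from \cite[Chapter 1]{MR1215939} and \cite[Section 3]{MR1999259}, with no proof given. Your write-up therefore fills in details the paper deliberately omits; the derivation of (i) from the recurrence relations \eqref{derivative of h_n} and the coefficientwise verification of (iii)--(v) are exactly the standard arguments one finds in those references, so there is no divergence in approach to report.
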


The operator $\Hop$ on $\Sc(\R; \C)$ is defined similarly as
\[(\Hop \phi)(x) := (x^2 - \partial^2) \phi (x), \forall \phi \in \Sc(\R; \C), x \in \R.\]
Then, $\Hop$ is extended to $\Sc^\prime(\R; \C)$ and to $\Sc_p(\R; \C)$ in the natural way. Some well-known properties are listed below.

\begin{proposition}[{\cite[Section 3]{MR1999259}}]\label{Hop-C-properties}
\begin{enumerate}[label=(\roman*)]
    \item For $w \in \C$, $w$-th power of $\Hop$ can be defined on first $\Sc(\R; \C)$, and then extended to $\Sc^\prime(\R; \C)$ as follows:
\[\Hop^w \phi := \sum_{n = 0}^\infty  (2n+1)^{w} \inpr{\phi}{h_n} h_n.\]

\item For any $x, y \in \R$, $\Hop^{x+iy} = \Hop^x \Hop^{iy} = \Hop^{iy} \Hop^x$. Moreover, 
\[\norm[0,\C]{\Hop^{x+iy}\phi} = \norm[x,\C]{\phi}, \forall \phi \in \Sc_x(\R; \C).\]

\item For any $y \in \R$ and $p \in \R$, $\Hop^{iy}:\Sc_p(\R; \C) \to \Sc_p(\R; \C)$ is a linear isometry.
\end{enumerate}
\end{proposition}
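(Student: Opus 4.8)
The plan is to exploit the fact that $\Hop$ is diagonalized by the Hermite basis, so that every complex power $\Hop^w$ acts as a multiplier $(2n+1)^w$ on the coefficients $\inpr[0,\C]{\phi}{h_n}$; all three assertions then reduce to elementary estimates on these coefficients together with the single observation that $\lvert(2n+1)^{iy}\rvert = 1$ for real $y$, since $2n+1>0$.

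For part (i), I would first check that the defining series converges in the stated spaces. Recall from Proposition \ref{Hop-properties} that $\Hop h_n = (2n+1)h_n$. For $\phi \in \Sc(\R;\C)$ the coefficients $\inpr[0,\C]{\phi}{h_n}$ decay faster than any power of $(2n+1)$, so for every $q \in \R$ one has
\[
\sum_{n=0}^\infty (2n+1)^{2q}\,\bigl\lvert (2n+1)^{w}\inpr[0,\C]{\phi}{h_n}\bigr\rvert^2 = \sum_{n=0}^\infty (2n+1)^{2(q+\Re w)}\,\bigl\lvert\inpr[0,\C]{\phi}{h_n}\bigr\rvert^2 < \infty,
\]
using $\lvert(2n+1)^w\rvert = (2n+1)^{\Re w}$. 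Hence $\Hop^w\phi \in \Sc_q(\R;\C)$ for every $q$, that is $\Hop^w\phi \in \Sc(\R;\C)$, and the map is linear. To extend to $\Sc'(\R;\C)$, I would use that any $\psi \in \Sc'(\R;\C)$ lies in some $\Sc_{-r}(\R;\C)$, $r>0$, so its coefficients grow at most polynomially; the same estimate then shows $\Hop^w\psi \in \Sc_{-(r+\lvert\Re w\rvert)}(\R;\C) \subset \Sc'(\R;\C)$, with the series converging in that norm.

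For part (ii), the factorization is immediate from the diagonal action. Using orthonormality of $\{h_n\}$ one computes $\inpr[0,\C]{\Hop^x\phi}{h_m} = (2m+1)^x\inpr[0,\C]{\phi}{h_m}$, whence
\[
\Hop^{iy}(\Hop^x\phi) = \sum_{n=0}^\infty (2n+1)^{iy}(2n+1)^{x}\inpr[0,\C]{\phi}{h_n}h_n = \sum_{n=0}^\infty (2n+1)^{x+iy}\inpr[0,\C]{\phi}{h_n}h_n = \Hop^{x+iy}\phi,
\]
and the same computation with the roles of $x$ and $iy$ exchanged gives $\Hop^x\Hop^{iy}=\Hop^{x+iy}$, so the two compositions coincide. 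For the norm identity, since $\lvert(2n+1)^{x+iy}\rvert = (2n+1)^x$, I obtain $\norm[0,\C]{\Hop^{x+iy}\phi}^2 = \sum_n (2n+1)^{2x}\lvert\inpr[0,\C]{\phi}{h_n}\rvert^2 = \norm[x,\C]{\phi}^2$ directly from the definition of the norm.

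Part (iii) is then the special case $x=0$ of the modulus computation, carried through at regularity $p$ rather than $0$: for $\phi \in \Sc_p(\R;\C)$,
\[
\norm[p,\C]{\Hop^{iy}\phi}^2 = \sum_{n=0}^\infty (2n+1)^{2p}\,\bigl\lvert (2n+1)^{iy}\inpr[0,\C]{\phi}{h_n}\bigr\rvert^2 = \sum_{n=0}^\infty (2n+1)^{2p}\,\bigl\lvert\inpr[0,\C]{\phi}{h_n}\bigr\rvert^2 = \norm[p,\C]{\phi}^2,
\]
so $\Hop^{iy}$ is a linear isometry; that it is onto follows because $\Hop^{-iy}$ is a two-sided inverse by part (ii). The only genuinely delicate point is the convergence bookkeeping in part (i) — in particular matching the polynomial growth of the coefficients of a distribution against the factor $(2n+1)^{\Re w}$ when extending to $\Sc'(\R;\C)$; everything else is a routine diagonal computation resting on $\lvert(2n+1)^{iy}\rvert = 1$.
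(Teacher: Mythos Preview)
Your argument is correct: everything follows from the diagonal action of $\Hop$ on the Hermite basis together with $\lvert (2n+1)^{iy}\rvert = 1$, and your convergence bookkeeping for part (i) is the right thing to check. Note, however, that the paper does not actually supply a proof of this proposition --- it is quoted as a known result from \cite[Section 3]{MR1999259} and stated without argument --- so there is no ``paper's proof'' to compare against; your write-up is a perfectly standard verification of these facts and would be the natural way to fill in the details.
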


We now highlight two simple observations in the next  lemma.

\begin{lemma}
Let $w \in \C$.
\begin{enumerate}[label=(\roman*)]
    \item For any $ \phi \in  \Sc  ( \R;\C ) $,
    \[  \overline {   \Hop ^ 
 w \phi } =   \Hop ^ { \Bar { w } } \Bar { \phi } = \sum _ n ( 2 n 
 + 1 ) ^ { \Bar { w } } \inpr[ 0 ,\C] { \Bar { \phi } } { h _ n } h _ n \]
 \item The adjoint of $  \Hop  ^ w $ is  $  \Hop ^ { \bar { w } } $.
\end{enumerate}
 
\end{lemma}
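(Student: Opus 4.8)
The plan is to reduce both claims to the spectral definition $\Hop^w\phi = \sum_n (2n+1)^{w}\inpr[0,\C]{\phi}{h_n}\,h_n$ from Proposition \ref{Hop-C-properties}(i), together with three elementary facts that I would record first. First, the Hermite functions $h_n$ are real-valued, so $\overline{h_n}=h_n$. Second, the scalars $2n+1$ are \emph{positive reals}, so for every $w\in\C$ one has $\overline{(2n+1)^{w}}=(2n+1)^{\bar w}$, using the principal branch $a^{w}=e^{w\log a}$ for $a>0$. Third, complex conjugation $C:\phi\mapsto\bar\phi$ is a conjugate-linear isometry of each $\Sc_p(\R;\C)$, hence continuous, and therefore interchangeable with the defining series. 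It is also convenient to record that, with the convention fixed in Subsection \ref{S2:topology-complex}, $\inpr[0,\C]{f}{g}=\int_\R f\,\bar g\,dx$ is linear in the first slot and conjugate-linear in the second, and that $\inpr[0,\C]{\phi}{h_n}=\int_\R \phi\,h_n\,dx$.

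For part (i), I would apply $C$ termwise to the defining series. Since $C$ is continuous it commutes with the sum, and term by term $\overline{(2n+1)^{w}\inpr[0,\C]{\phi}{h_n}\,h_n}=(2n+1)^{\bar w}\,\overline{\inpr[0,\C]{\phi}{h_n}}\,h_n$, invoking $\overline{h_n}=h_n$ and $\overline{(2n+1)^{w}}=(2n+1)^{\bar w}$. It then remains to identify $\overline{\inpr[0,\C]{\phi}{h_n}}=\inpr[0,\C]{\bar\phi}{h_n}$, which follows at once from $\inpr[0,\C]{\phi}{h_n}=\int_\R\phi\,h_n\,dx$ by conjugating the integral and using the reality of $h_n$. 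Substituting back, the conjugated series is exactly $\sum_n(2n+1)^{\bar w}\inpr[0,\C]{\bar\phi}{h_n}\,h_n$, which is by definition $\Hop^{\bar w}\bar\phi$. This produces both displayed equalities at once.

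For part (ii), I would verify the defining adjoint identity $\inpr[0,\C]{\Hop^w\phi}{\psi}=\inpr[0,\C]{\phi}{\Hop^{\bar w}\psi}$ directly on $\Sc(\R;\C)$. Expanding via the orthonormal basis $\{h_n\}$ and Parseval, the left side collapses to $\sum_n (2n+1)^{w}\inpr[0,\C]{\phi}{h_n}\,\overline{\inpr[0,\C]{\psi}{h_n}}$, while the right side, using conjugate-linearity in the second slot, collapses to $\sum_n \overline{(2n+1)^{\bar w}}\,\inpr[0,\C]{\phi}{h_n}\,\overline{\inpr[0,\C]{\psi}{h_n}}$; the two agree because $\overline{(2n+1)^{\bar w}}=(2n+1)^{w}$. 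Conceptually, (ii) is just the statement that an operator diagonal in a \emph{real} orthonormal basis has, as its adjoint, the diagonal operator with complex-conjugated eigenvalues, which is precisely the spectral definition combined with part (i).

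The computations are routine; the only points needing care are the continuity of $C$ justifying the termwise handling of the infinite series in (i), and, in (ii), the bookkeeping of which argument carries the conjugate together with the identity $\overline{(2n+1)^{\bar w}}=(2n+1)^{w}$. If one wishes to read (ii) as a statement about $\Hop^w$ as a genuinely unbounded operator (when $\mathrm{Re}\,w>0$), the single non-formal point is to note that $\Hop^w$ and $\Hop^{\bar w}$ share the natural dense domain $\Sc_{\mathrm{Re}\,w}(\R;\C)$, so that the sesquilinear identity above identifies $\Hop^{\bar w}$ with the full Hilbert-space adjoint and not merely a formal adjoint on $\Sc(\R;\C)$.
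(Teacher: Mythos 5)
Your proposal is correct; the paper itself states this lemma without proof (introducing it merely as ``two simple observations''), and your argument---termwise conjugation of the spectral series using $\overline{h_n}=h_n$ and $\overline{(2n+1)^{w}}=(2n+1)^{\bar w}$ for part (i), and the Parseval computation of $\inpr[0,\C]{\Hop^w\phi}{\psi}=\inpr[0,\C]{\phi}{\Hop^{\bar w}\psi}$ for part (ii)---is exactly the intended one, consistent with the paper's convention that $\inpr[0,\C]{\cdot}{\cdot}$ is conjugate-linear in the second slot. Your closing remark on the genuine unbounded adjoint with domain $\Sc_{\mathrm{Re}\,w}(\R;\C)$ goes beyond what the paper needs (it only uses the sesquilinear identity on $\Sc(\R;\C)$ in the $F(w)$ computations), but it is a correct and welcome refinement.
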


 \section{Proofs of Main results}\label{S2:proofs}
In this section, we discuss the proof of main results.
\subsection{Preliminary results}\label{Preliminary Results}
 First we need to  discuss some useful bounded operators.
\begin{lemma}\label{seq-of-order-1/n}
  For any non-zero complex number $w$,  consider the following sequences
  \begin{align*}
      \widetilde { \alpha } _ n ( w ) = & \sqrt { \frac { n } { 2 } } \left
[ \left ( \frac { 2 n - 1 } { 2 n + 1 } \right ) ^ w - 1 \right ] , \quad
 \widetilde { \beta } _ n ( w )  = \sqrt { \frac { n + 1 } { 2 } } \left [
1 - \left ( \frac { 2 n + 3 }
 {2 n + 1 } \right ) ^ w \right ]\\
\widetilde { \gamma } _ n ( w ) = &  \sqrt {  n ( n - 1 )} \left [ \left (
\frac { 2 n + 1 } { 2 n - 3 } \right ) ^ w - 1 \right ]
  , \quad         \widetilde { m } _ n ( w )  = \sqrt {  ( n + 1) ( n +2 )}
 \left [ \left ( \frac { 2 n + 1 } { 2 n + 5 } \right ) ^ w - 1 \right ]\\
   \widetilde { l } _ n ( w ) = & \frac {    2 n + 1  } { 2 } \left [ \left
( \frac { 2 n + 5 } { 2 n + 1 } \right ) ^ w - 1 \right ],\quad \widetilde { a  } _  n(w)  = \sqrt  { \frac { n  } {2  } }  \left [  1 -
\left ( \frac { 2  n +  1 }  { 2  n -  3 } \right ) ^  { w  } \right ]\\
\widetilde { b }
 _ n(w) =& \sqrt { \frac { n +  1 }  { 2 } } \left [ \left ( \frac { 2 n  - 1  } {
 2 n +  5 } \right ) ^  { w }  - 1  \right ].
  \end{align*}
Then the following inequalities are satisfied:
  \begin{align*}
      \left | \widetilde { \alpha } _ n ( w ) \right | \leq & \frac { M } {
\sqrt { n } } , &\left | \widetilde { \beta } _ n ( w ) \right | \leq &
\frac { M } { \sqrt { n } }, &\left | \widetilde { a } _ n ( w ) \right | \leq &
\frac { M } { \sqrt { n } }, &\left | \widetilde {a } _ n ( w ) \right | \leq &
\frac { M } { \sqrt { n } }\\
&\left | \widetilde { \gamma } _ n ( w ) \right | \leq 
 M , &\left | \widetilde { m } _ n ( w ) \right | \leq &
 M , &\left | \widetilde { l } _ n ( w ) \right | \leq &
 M,  & \forall  n \in \N
  \end{align*}
  for some $ M > 0 $. Consequently, all the sequences are bounded.
\end{lemma}

\begin{proof}
The argument is similar to \cite[proof of Lemma 2.2]{MR2590157}. Consider the analytic function  $ f : \D \to \C $  defined as
\[ f ( z ) = \left ( \frac { 2 - z } { 2 + z } \right ) ^  { w  } - 1 \] where $
\D = \{ z \in \C  \: ; \; | z | < 1 \} $ . Since $ f ( 0 ) = 0 $, therefore
there exists an analytic function $ g $ such that $ f (  z ) = z  g (  z ) , \;
\forall  z \in \D $. But on the compact set $ \overline { B ( 0 , \frac { 1 } {
2 } ) } $ the function $g$ is bounded, say by some constant $ R >  0 $.

Fix a positive integer $N$ such that $ \frac { 1  } {  N  } < \frac { 1 } {  2 }
$. Then $ \forall  n > N $,
\[ | \widetilde { \alpha } _ n ( w ) | = \sqrt { \frac { n } { 2 } } \left
| f \left ( \frac { 1  } { n } \right ) \right | \leq \frac { 1  } { \sqrt { 2 n
} } \left | g \left ( \frac { 1 } { n } \right ) \right | \leq \frac { R } {
\sqrt { 2 n }} . \]
Then taking $ M : = \sup \left \{ |\widetilde { \alpha } _ 1 ( w ) | ,
\sqrt { 2 } | \widetilde { \alpha } _ 2 ( w ) | ,\sqrt { 3 } |\widetilde {
\alpha } _ 3 ( w ) |,\frac { R } { \sqrt { 2 } } \right \} $ we have \[ |
\widetilde { \alpha } _ n ( w )| \leq \frac { M } { \sqrt { n } } \;
\forall n > 0 \]
From this inequality, required bound can be  obtained. Other proofs  are
similar.
\end{proof}

In an analogous manner, the next result follows. We skip the proof for brevity.
  \begin{lemma}\label{seq-of-order-1/n^2}
  For any non-zero complex number $w$,   the following  sequences
  \begin{align*}
       a  _ n = &  \sqrt { \frac { n ( n - 1 ) ( n - 2 ) ( n - 3 )   } { 1 6 }
}  \left [ \left ( \frac{ 2 n + 1 } { 2 n - 7 } \right ) ^ w - 2  \left (
\frac{ 2 n - 3 } { 2 n - 7 } \right ) ^ w + 1 \right ]\\
       b _ n = &  \frac {  n ( n - 1 )}{ 4 } \left [ \left ( \frac { 2 n - 3 } {
2 n + 1 } \right ) ^ w +  \left (\frac { 2 n + 5 } { 2 n + 1 } \right ) ^ w - 2
\right ]  \\
       c _ n = &  \sqrt { \frac { n ( n + 1 ) ( n + 2 ) ( n + 3 )   } { 1 6 }  }
 \left [ \left ( \frac { 2 n + 1 } { 2 n  +9  } \right ) ^ w  - 2 \left (
\frac { 2 n + 5 } { 2 n  +9  } \right ) ^ w  + 1 \right ]
  \end{align*}
 are bounded and the following sequences
 \begin{align*}
      l  _ n = &  \sqrt { \frac { n ( n - 1 ) ( n - 2 )    } { 8 }  }  \left [
\left ( \frac{ 2 n + 1 } { 2 n - 5 } \right ) ^ w  -  \left ( \frac{ 2 n - 3 } {
2 n - 5 }  \right ) ^ w - \left ( \frac{ 2 n - 1 } { 2 n - 5 } \right ) ^ w + 1
\right ]\\
 m  _ n = & \frac{ n + 1 }{ 2 } \sqrt {  \frac { n  }{ 2 } } \left [ -\left (
\frac { 2 n + 1 } { 2 n - 1 } \right ) ^ w + \left ( \frac { 2 n - 3 } { 2 n - 1
} \right ) ^ w + \left ( \frac { 2 n + 3 } { 2 n - 1 } \right ) ^ w - 1 \right
]\\
  t   _ n = & \frac{ n  }{ 2 } \sqrt {  \frac { n + 1 }{ 2 } } \left [   \left (
\frac { 2 n + 5 } { 2 n + 3 } \right ) ^ w  - \left ( \frac { 2 n + 1 } { 2 n +
3 } \right ) ^ w - \left ( \frac { 2 n - 1 } { 2 n + 3 } \right ) ^ w + 1 \right
]\\
\alpha _ n  = & \sqrt { \frac { ( n + 1 ) ( n + 2 ) ( n + 3 )   } { 8 }  }
\left [ \left ( \frac { 2 n + 1 } { 2 n  + 7  } \right ) ^ w - \left ( \frac { 2
n + 3 } { 2 n  + 7  } \right ) ^ w -  \left ( \frac { 2 n + 5 } { 2 n  + 7  }
\right ) ^ w  + 1 \right ]\\
   \beta _ n = & \sqrt {  \frac { n  }{ 2 } } \left [  \frac{1}{2} \left ( \frac
{ 2 n + 1 } { 2 n - 1 } \right ) ^ w +\left ( \frac { 2 n - 3 } { 2 n - 1 }
\right ) ^ w - \frac{3}{2} \right ]\\
 \gamma _ n = &  \sqrt {  \frac { n + 1 }{ 2 } }  \left [ \frac{1}{2} \left (
\frac { 2 n + 1 } { 2 n + 3 } \right ) ^ w + \left ( \frac { 2 n + 5 } { 2 n + 3
} \right ) ^ w - \frac{3}{2} \right ]
 \end{align*}
   satisfy the following inequalities
\begin{align*}
      \left |  l  _ n \right | \leq & \frac { M } { \sqrt { n } } , &\left |  m
_ n \right | \leq & \frac { M } { \sqrt { n } } &\left |  t  _ n \right | \leq &
\frac { M } { \sqrt { n } } &\left |  \alpha  _ n \right | \leq &
\frac { M } { \sqrt { n } } &\left |  \beta  _ n \right | \leq &
\frac { M } {  n^\frac{3}{2} } &\left |  \gamma  _ n \right | \leq &
\frac { M } { n^\frac{3}{2} } & \forall  n \in \N
  \end{align*}
hold for some $M > 0$.
\end{lemma}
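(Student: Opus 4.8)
The plan is to follow the template established in the proof of Lemma~\ref{seq-of-order-1/n}, the only new feature being that we must now extract a \emph{second-order} vanishing from each bracketed expression rather than a first-order one. Fix the nonzero complex number $w$. Each sequence listed has the form (prefactor)$\times$(bracket), where the bracket is a fixed finite linear combination of terms $\left(\frac{2n+c_j}{2n+c_0}\right)^w$ with integer shifts $c_j$. I would first normalise every ratio against the common denominator $2n+c_0$, writing $\frac{2n+c_j}{2n+c_0}=1+(c_j-c_0)\,x$ with $x=\frac{1}{2n+c_0}$ (or a fixed rescaling thereof), so that the bracket becomes $G_\ast(x)$ for a function $G_\ast$ that is analytic on a neighbourhood of $0$ in $\D$ and depends only on $w$ and the integer shifts. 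For example, for $a_n$ the substitution $x=\frac{4}{2n-7}$ gives $G(x)=(1+2x)^w-2(1+x)^w+1$, a forward second difference of $t\mapsto(1+t)^w$, while for $b_n$ the substitution $x=\frac{4}{2n+1}$ gives the symmetric second difference $G(x)=(1+x)^w+(1-x)^w-2$.

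The structural observation driving the estimate is that each $G_\ast$ is (up to the rescaling) a \emph{second} finite difference of the analytic map $t\mapsto(1+t)^w$, and therefore vanishes at $x=0$ to second order. Concretely, I would Taylor-expand each $G_\ast$ about $x=0$ and check that the constant and the linear coefficients cancel, so that $G_\ast(x)=x^2\,g_\ast(x)$ for an analytic $g_\ast$; exactly as in Lemma~\ref{seq-of-order-1/n}, $g_\ast$ is then bounded by some $R_\ast>0$ on a fixed closed disk $\overline{B(0,r)}$ with $r<\tfrac12$. It then remains to match the resulting $x^2\sim n^{-2}$ decay against the polynomial growth of the prefactors: those of $a_n,b_n,c_n$ grow like $n^2$, so the products stay bounded; those of $l_n,m_n,t_n,\alpha_n$ grow like $n^{3/2}$, yielding the bound $M/\sqrt{n}$; and those of $\beta_n,\gamma_n$ grow like $n^{1/2}$, yielding the bound $M/n^{3/2}$. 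In each case I would fix a large $N$ with $\tfrac{1}{2N+c_0}<r$, apply $|G_\ast(x)|\le R_\ast\,x^2$ for all $n>N$, and absorb the finitely many remaining indices $n\le N$ into the constant $M$ by taking a maximum, precisely the closing step of the previous proof.

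The main obstacle is the verification, sequence by sequence, that each bracket genuinely vanishes to order two at the origin, i.e.\ that \emph{both} the constant and the first-order Taylor coefficients cancel. The cancellation of the constant term holds by design, but the cancellation of the linear term is a genuine constraint: it is exactly what forces the bracketed combinations to be symmetric (central) second differences, and this is where the precise rational coefficients (such as the $\tfrac12$ weights in $\beta_n$ and $\gamma_n$) and the precise integer shifts must be tracked with care. Once the order-two vanishing is confirmed for every one of the nine sequences, the stated boundedness and decay estimates follow uniformly in $n$ by the argument above, with a single constant $M=M(w)$.
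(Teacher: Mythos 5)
Your strategy is exactly the one the paper intends: the paper offers no written proof of this lemma, dismissing it as ``analogous'' to Lemma \ref{seq-of-order-1/n}, and your factorization $G_\ast(x)=x^2 g_\ast(x)$ with $g_\ast$ analytic and bounded on a closed disk, followed by matching the $x^2\sim n^{-2}$ decay against the polynomial prefactors and absorbing finitely many small indices into $M$, is precisely the second-order analogue of the $f(z)=z\,g(z)$ argument used there. For six of the nine sequences your plan does go through as described: for $a_n,b_n,c_n$ the brackets are genuine second differences (e.g.\ $(1+2x)^w-2(1+x)^w+1=2\binom{w}{2}\cdot 2x^2+O(x^3)$, in the notation $\binom{w}{2}=\frac{w(w-1)}{2}$), and for $l_n,m_n,\alpha_n$ the linear Taylor coefficients indeed cancel ($3w-w-2w=0$, $-w-w+2w=0$, $-3w+2w+w=0$ respectively), giving the claimed $M/\sqrt{n}$ bounds.

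However, the verification you explicitly deferred --- ``once the order-two vanishing is confirmed for every one of the nine sequences'' --- is the step where the plan breaks, because for the three remaining sequences, as printed, the linear coefficients do \emph{not} cancel. For $t_n$, with $x=\frac{2}{2n+3}$ the bracket is $(1+x)^w-(1-x)^w-(1-2x)^w+1=4wx+O(x^2)$; already at $w=1$ it equals exactly $\frac{8}{2n+3}$, so
\begin{equation*}
t_n=\frac{n}{2}\sqrt{\frac{n+1}{2}}\cdot\frac{8}{2n+3}\sim\sqrt{2n}\longrightarrow\infty,
\end{equation*}
contradicting $|t_n|\leq M/\sqrt{n}$. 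For $\beta_n$, with $x=\frac{2}{2n-1}$ the bracket is $\frac{1}{2}(1+x)^w+(1-x)^w-\frac{3}{2}=-\frac{w}{2}x+O(x^2)$; at $w=1$ it equals $-\frac{1}{2n-1}$, so $\beta_n\sim -\frac{1}{2\sqrt{2}}\,n^{-1/2}$, which is not $O(n^{-3/2})$. The same first-order obstruction (linear coefficient $+\frac{w}{2}$) occurs for $\gamma_n$. So your assertion that the stated decay rates ``follow uniformly'' once the cancellation is checked cannot be completed for the lemma as stated; the statement itself appears to carry typos in these three brackets (for instance, replacing $\frac{2n+1}{2n-1}$ by $\frac{2n+3}{2n-1}$ in $\beta_n$, and $\frac{2n+1}{2n+3}$ by $\frac{2n-1}{2n+3}$ in $\gamma_n$, makes the linear terms cancel and your argument then delivers the claimed $M/n^{3/2}$ bounds). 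In short: right method, correctly identified crux, but the crux fails for $t_n,\beta_n,\gamma_n$ as printed, and a complete proof must either carry out the sequence-by-sequence check (which would have exposed this) or first emend the formulas.
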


Fix $w \in \C$ and $k \in \N$. We start by defining linear operators  $ T_{\Tilde{\alpha}(w)} , T_{\Tilde{\beta}(w)} , U _ { + k } , U _ { - k } $   on $ \Sc (\R; \C ) $ via the formal expressions for $ \phi = \sum _ { n = 0 } ^ { \infty } \phi _ n h  _ n  \in  \Sc (\R; \C ) $ as follows,
\begin{align*}
    T_{ \Tilde{ \alpha }(w) } \phi = & \sum _ { n = 0 } ^ { \infty } \Tilde{\alpha} _ n ( w ) \phi _ n h _ n     & T_{\Tilde{\beta}(w)} \phi = \sum _ { n = 0 } ^ { \infty } \Tilde{\beta} _ n  ( w ) \phi _ n h _ n  \\
    U _ { + k } \phi = & \sum _ { n =  0 } ^ { \infty } \phi _ { n +  k } h _ n   
    & U _ { - k}  \phi = \sum _ { n = 0 }  ^ { \infty } \phi _ { n - k } h _ n,   
\end{align*}
where $\Tilde{\alpha} _ n$ and $\Tilde{\beta} _ n$ are as in Lemma \ref{seq-of-order-1/n}.

\begin{lemma}\label{bnd-scale-shift-ops}
    $ T_{\Tilde{\alpha}( w )} , T_{\Tilde{\beta} ( w )} ,   U  _ { + k } ,    U _ { - k } $ are bounded linear operators on $ \left ( \Sc (\R; \C ) , \| \cdot \| _ {q, \C} \right ) , $ and hence can be extended to bounded linear operators on $ \left ( \Sc _ q (\R; \C ) , \| \cdot \| _ {q, \C} \right ) $.
\end{lemma}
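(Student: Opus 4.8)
The plan is to argue entirely at the level of Hermite coefficients. Writing $\phi = \sum_{n=0}^\infty \phi_n h_n$ with $\phi_n = \inpr[0,\C]{\phi}{h_n}$, the defining norm reads $\|\phi\|_{q,\C}^2 = \sum_{n=0}^\infty (2n+1)^{2q}|\phi_n|^2$. It therefore suffices to establish an estimate $\|T\phi\|_{q,\C} \leq C\|\phi\|_{q,\C}$ on the dense subspace $\Sc(\R;\C)$ for each of the four operators $T$, and then to appeal to the density of $\Sc(\R;\C)$ in its completion $\Sc_q(\R;\C)$. Since the coefficients of any $\phi \in \Sc(\R;\C)$ decay faster than any polynomial, and each of the four operations (multiplication by a bounded sequence, or a coefficient shift) preserves rapid decay, the image lands again in $\Sc(\R;\C)$, so the operators are genuinely defined on $(\Sc(\R;\C),\|\cdot\|_{q,\C})$.

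First I would dispose of the diagonal (multiplier) operators $T_{\tilde{\alpha}(w)}$ and $T_{\tilde{\beta}(w)}$, which act by $\phi_n \mapsto \tilde{\alpha}_n(w)\phi_n$ and $\phi_n \mapsto \tilde{\beta}_n(w)\phi_n$ and leave the weights $(2n+1)^{2q}$ untouched. This gives immediately $\|T_{\tilde{\alpha}(w)}\phi\|_{q,\C}^2 = \sum_{n} (2n+1)^{2q}|\tilde{\alpha}_n(w)|^2|\phi_n|^2 \leq \big(\sup_n |\tilde{\alpha}_n(w)|\big)^2 \|\phi\|_{q,\C}^2$, and the supremum is finite by Lemma \ref{seq-of-order-1/n}; the identical computation with $\tilde{\beta}_n(w)$ handles $T_{\tilde{\beta}(w)}$. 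This part is essentially immediate once the uniform bounds of Lemma \ref{seq-of-order-1/n} are in hand.

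The shift operators require slightly more care. Reindexing the coefficient sums (with the convention $\phi_j = 0$ for $j<0$) yields $\|U_{+k}\phi\|_{q,\C}^2 = \sum_{m\geq k}(2m-2k+1)^{2q}|\phi_m|^2$ and $\|U_{-k}\phi\|_{q,\C}^2 = \sum_{m\geq 0}(2m+2k+1)^{2q}|\phi_m|^2$, so everything reduces to comparing the shifted weight with $(2m+1)^{2q}$. I would prove the single elementary inequality $(2m\pm 2k+1)^{2q} \leq (2k+1)^{2|q|}(2m+1)^{2q}$ valid for all admissible $m$ and all real $q$. For $q\geq 0$ this follows from the ratio bound $\tfrac{2m+2k+1}{2m+1}\leq 2k+1$ (maximal at $m=0$), while for $q<0$ it follows from $\tfrac{2m+2k+1}{2m+1}\geq 1$, together with the mirror estimates $\tfrac{2m+1}{2m-2k+1}\leq 2k+1$ and $\tfrac{2m-2k+1}{2m+1}\leq 1$ for $U_{+k}$. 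The one genuinely non-routine point to watch is that the direction of these weight comparisons flips with the sign of $q$, so the case split must be organized so that the single constant $C=(2k+1)^{2|q|}$ serves uniformly for all real $q$; this is precisely what makes the estimate hold on every $\Sc_q(\R;\C)$, including the negative-regularity distributional spaces.

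Finally, having obtained $\|T\phi\|_{q,\C}\leq C\|\phi\|_{q,\C}$ for each of the four operators on the dense subspace $\Sc(\R;\C)$, I would conclude via the standard bounded-linear-transformation extension theorem: a bounded linear map on a dense subspace of a normed space extends uniquely to a bounded linear map on its completion, here $\Sc_q(\R;\C)$, with the same operator-norm bound. I expect the sign-sensitive weight estimate for the shifts at negative $q$ to be the only step demanding real attention; the remainder is bookkeeping with the coefficient series.
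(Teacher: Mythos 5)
Your argument is correct and is essentially the paper's own approach: the paper proves Lemma \ref{bnd-scale-shift-ops} simply by deferring to ``standard arguments'' in the proof of \cite[Theorem 2.1]{MR3331916}, and those standard arguments are precisely your coefficient-level computation (diagonal multipliers bounded via the uniform bounds of Lemma \ref{seq-of-order-1/n}, shifts via weight comparison, then density). Your write-up just makes explicit what the citation leaves implicit, including the sign-uniform constant $(2k+1)^{2|q|}$ for the shifted weights.
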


\begin{proof}
Boundedness of the scaling operators
$ T_{\Tilde{\alpha}( 2q )} , T_{\Tilde{\beta} ( 2q )}$ and the shift operators $U  _ {  +  1 } ,  U _  {  - 1  } $ on $ \left ( \Sc (\R; \C ) , \| \cdot \| _ {q, \C} \right ) $ and then to $ \left ( \Sc_q (\R; \C ) , \| \cdot \| _ {q, \C} \right ) $ follow from standard arguments (see, for example, the proof of \cite[Theorem 2.1]{MR3331916}).
\end{proof}

The next result is a simple extension of \cite[Theorem 2.1]{MR3331916}, in identifying the adjoint of the distributional derivative operators on the Hermite-Sobolev spaces.

\begin{lemma}\label{adj-partial-on-complex}
  Fix $q \in \R$. We have, for any $ \phi , \psi \in  \Sc(\R; \C) $,
  \[ { \inpr { \phi } { \partial \psi } } _ {q, \C} + { \inpr { \partial \phi } { \psi } } _ {q, \C} = { \inpr { \phi } { ( T_{\Tilde{\alpha} ( 2q )} U _ { - 1 } + T_{\Tilde{\beta} ( 2q ) } U _ { + 1 } ) \psi } } _ {q, \C} \]
  and hence we obtain the adjoint operator
  \[ \partial ^ \ast = - \partial + T  \; o n \;  \Sc (\R; \C ) \]
  where $ T  =  T_{\Tilde{\alpha} ( 2q )} U _ { - 1 } + T_{\Tilde{\beta} ( 2q ) } U _ { + 1 } $ is a bounded linear operator on $ \left ( \Sc _ q (\R; \C ) , \| \cdot \| _ {q, \C} \right ) $.
\end{lemma}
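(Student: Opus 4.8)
The plan is to verify the stated identity by a direct computation in the Hermite basis and then read off the adjoint. Writing $\phi = \sum_n \phi_n h_n$ and $\psi = \sum_n \psi_n h_n$ with $\phi_n = \inpr[0, \C]{\phi}{h_n}$ and $\psi_n = \inpr[0, \C]{\psi}{h_n}$, the rapid decay of these coefficients (which holds since $\phi, \psi \in \Sc(\R; \C)$) guarantees that every series below converges absolutely, so all interchanges of summation and re-indexings are legitimate. First I would use the recurrence \eqref{derivative of h_n} to express the Hermite coefficients of $\partial\psi$: collecting the contributions of $\partial h_n = \sqrt{n/2}\,h_{n-1} - \sqrt{(n+1)/2}\,h_{n+1}$ gives $\inpr[0, \C]{\partial\psi}{h_k} = \sqrt{(k+1)/2}\,\psi_{k+1} - \sqrt{k/2}\,\psi_{k-1}$, and symmetrically for $\partial\phi$.

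Next I would substitute these into the two $\Sc_q$-inner products. Since the coefficients in \eqref{derivative of h_n} are real, the conjugation only touches the $\psi$-coefficients, and one obtains
\[
\inpr[q, \C]{\phi}{\partial\psi} = \sum_k (2k+1)^{2q}\,\phi_k \left(\sqrt{\tfrac{k+1}{2}}\,\overline{\psi_{k+1}} - \sqrt{\tfrac{k}{2}}\,\overline{\psi_{k-1}}\right),
\]
\[
\inpr[q, \C]{\partial\phi}{\psi} = \sum_k (2k+1)^{2q}\,\left(\sqrt{\tfrac{k+1}{2}}\,\phi_{k+1} - \sqrt{\tfrac{k}{2}}\,\phi_{k-1}\right)\overline{\psi_k}.
\]
The crucial step is to re-index the second sum so that the free index sits on $\phi$: shifting $k \mapsto k-1$ in its first term and $k \mapsto k+1$ in its second turns the prefactors $(2k+1)^{2q}$ into $(2k-1)^{2q}$ and $(2k+3)^{2q}$ respectively. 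Adding the two displays and collecting the coefficients of $\phi_k \overline{\psi_{k-1}}$ and of $\phi_k \overline{\psi_{k+1}}$ yields
\[
\inpr[q, \C]{\phi}{\partial\psi} + \inpr[q, \C]{\partial\phi}{\psi} = \sum_k \phi_k \Big[ \sqrt{\tfrac{k}{2}}\big((2k-1)^{2q}-(2k+1)^{2q}\big)\overline{\psi_{k-1}} + \sqrt{\tfrac{k+1}{2}}\big((2k+1)^{2q}-(2k+3)^{2q}\big)\overline{\psi_{k+1}} \Big].
\]

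Then I would factor $(2k+1)^{2q}$ out of each bracket and recognize, with $w = 2q$, exactly the sequences of Lemma \ref{seq-of-order-1/n}: the $\overline{\psi_{k-1}}$-coefficient becomes $(2k+1)^{2q}\,\widetilde{\alpha}_k(2q)$ and the $\overline{\psi_{k+1}}$-coefficient becomes $(2k+1)^{2q}\,\widetilde{\beta}_k(2q)$. On the other hand, computing the right-hand side directly from the definitions gives
\[
(T_{\widetilde{\alpha}(2q)}U_{-1} + T_{\widetilde{\beta}(2q)}U_{+1})\psi = \sum_k \big(\widetilde{\alpha}_k(2q)\,\psi_{k-1} + \widetilde{\beta}_k(2q)\,\psi_{k+1}\big)h_k;
\]
since $w = 2q$ is real the sequences $\widetilde{\alpha}_k(2q), \widetilde{\beta}_k(2q)$ are real, so pairing against $\phi$ in $\inpr[q, \C]{\cdot}{\cdot}$ reproduces precisely the same series, which establishes the identity. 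Rewriting it as $\inpr[q, \C]{\partial\phi}{\psi} = \inpr[q, \C]{\phi}{(-\partial + T)\psi}$ then identifies $\partial^\ast = -\partial + T$, and the boundedness of $T = T_{\widetilde{\alpha}(2q)}U_{-1} + T_{\widetilde{\beta}(2q)}U_{+1}$ on $\left(\Sc_q(\R; \C), \norm[q, \C]{\cdot}\right)$ is immediate from Lemma \ref{bnd-scale-shift-ops}, being a sum of compositions of bounded operators.

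The computation is routine; the only real care needed is the bookkeeping in the re-indexing step, in particular tracking the boundary terms ($\psi_{-1} = 0$ and the $k=0$ contribution) and verifying that, after factoring out $(2k+1)^{2q}$, the residual factors $\big(\tfrac{2k-1}{2k+1}\big)^{2q} - 1$ and $1 - \big(\tfrac{2k+3}{2k+1}\big)^{2q}$ match the definitions of $\widetilde{\alpha}_k$ and $\widetilde{\beta}_k$ verbatim. The reality of $\widetilde{\alpha}_k(2q), \widetilde{\beta}_k(2q)$, needed to move them freely across the complex conjugation, is exactly what makes the real exponent $w = 2q$ the correct choice here, and is the point at which the complex-valued setting reduces cleanly to the real computation behind \cite[Theorem 2.1]{MR3331916}.
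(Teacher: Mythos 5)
Your proposal is correct and takes essentially the same approach as the paper's own proof: expansion in the Hermite basis, the recurrence \eqref{derivative of h_n}, re-indexing the $\inpr[q,\C]{\partial\phi}{\psi}$ sum to put the free index on $\phi$, factoring out $(2n+1)^{2q}$ to recognize $\widetilde{\alpha}_n(2q)$ and $\widetilde{\beta}_n(2q)$, and invoking Lemma \ref{bnd-scale-shift-ops} for the boundedness of $T$. Your explicit observation that the reality of $\widetilde{\alpha}_n(2q)$ and $\widetilde{\beta}_n(2q)$ is what lets the coefficients pass through the complex conjugation is left implicit in the paper, but it is the same argument.
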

\begin{proof}
Our arguments are similar to those in \cite[Theorem 2.1]{MR3331916}. Now, for any \\
$ \phi = \sum _ { n = 0 } ^ { \infty } \phi _ n h  _ n, \psi = \sum _ { n = 0 } ^ { \infty } \psi _ n h  _ n \in  \Sc(\R; \C) $, using \eqref{derivative of h_n} we have,
\[\partial \phi = \sum ^ \infty _ { n = 0 } \phi _ n ( \partial h _ n ) 
    =  \sum ^ \infty _ { n  =  0 } \sqrt { \frac { n  + 1  }  {  2 } } \phi _ { n + 1 } h _ n - \sum ^ \infty _ { n = 0  } \sqrt { \frac { n  } { 2  } } \phi _ { n  - 1 }  h _  n.\]
Similarly \[ \partial \psi = \sum ^ \infty _ { n =  0 } \sqrt { \frac { n + 1 } { 2 } } \psi _ { n + 1 } h _ n -  \sum ^ \infty _ { n  = 0} \sqrt { \frac { n } { 2  } } \psi _ { n- 1 } h _  n.\]
Therefore 
\begin{align*}
     \inpr[q, \C] { \phi } { \partial \psi }  = & \sum ^ \infty _ { n = 0 } ( 2 n  + 1 ) ^ { 2 q  } { \inpr[q, \C] { \phi } { h _ n } }   \overline { { \inpr[q, \C] { \partial \psi } { h _ n  } } } \\
    =& \sum ^ \infty _ { n =  0 } ( 2 n +  1 ) ^ { 2  q }  \phi _ n \left [ \sqrt { \frac { n  + 1 }  { 2  } } \Bar { \psi } _ {  n +  1 }  - \sqrt { \frac { n } 
 { 2 } } \Bar { \psi } _  { n  - 1  } \right ] 
\end{align*}
and
\begin{align*}
    \inpr [ q,\C ] { \partial \phi } { \psi } = & \sum ^ \infty _ { n = 0 }( 2 n + 1 ) ^ { 2 q } \bar { \psi } _  n \left [ \sqrt { \frac { n + 1 } { 2 } } \phi _ { n + 1 } - \sqrt { \frac { n } { 2 } } \phi _ { n - 1 } \right ] \\
    = & \sum ^ \infty _ { n = 0 } ( 2 n + 1 ) ^ { 2 q } \bar { \psi } _ n \sqrt { \frac { n + 1 } { 2 } } \phi _ { n + 1 } - \sum ^ \infty _ { n = 0 }( 2 n + 1 ) ^ { 2 q } \bar { \psi } _ n \sqrt { \frac { n } { 2 } } \phi _ { n - 1 } \\
     = & \sum ^ \infty _ { n = 1 } ( 2 n - 1 ) ^ { 2 q } \bar { \psi } _ { n - 1 } \sqrt { \frac { n } { 2 } } \phi _ { n } - \sum ^ \infty _ { n = - 1 }( 2 n + 3 ) ^ { 2 q } \bar { \psi } _ { n + 1 } \sqrt { \frac { n + 1 } { 2 } } \phi _ { n } \\
    = & \sum ^ \infty _ { n = 0 } \sqrt { \frac { n } { 2 } } ( 2 n - 1 ) ^ { 2 q } \bar { \psi } _ { n - 1 } \phi _ { n }  - \sum ^ \infty _ { n  = 0 }  \sqrt { \frac { n + 1 } { 2 } } ( 2 n + 3 ) ^ { 2 q } \bar { \psi } _ { n + 1 } \phi _ { n } \\
     = & \sum ^ \infty _ { n = 0 }  ( 2 n + 1 ) ^ { 2 q } \phi _ { n  }\bar { \psi } _ { 
 n - 1 } \left [ \sqrt { \frac { n } { 2 } } \left ( \frac { 2 n - 1 } 
 { 2 n + 1 } \right ) ^ { 2 q } \right ]\\
-&\sum ^ \infty _ { n = 0 } ( 2 n + 1 ) ^ { 2 q } \phi _ { n } \bar { \psi } _ { n + 1 } \left [ \sqrt { \frac { n + 1 } { 2 } } \left ( \frac { 2 n +  3 } { 2 n + 1 } \right ) ^ {  2q } \right ].
    \end{align*}
Combining the above two expressions, we end up with
\begin{align*}
        \inpr [ q, \C ] { \phi } { \partial \psi } + \inpr [ q, \C ] { \partial \phi } {  \psi }  = & \sum ^ \infty _ { n = 0 } ( 2 n + 1 ) ^ {  2 q } \phi _ { n } \bar{ \psi } 
 _ { n -  1 } \left [ \sqrt { \frac{ n }  { 2 }  } \left ( \frac  { 2 n  - 1 } { 
 2n  + 1 } \right ) ^ {  2 q } - 1 \right ] \\
         & - \sum ^ \infty _ { n  = 0 } (  2 n + 1  ) ^ { 2 q 
 } \phi _ { n } \bar { \psi } _ { n + 1 } \left [ 1 - \sqrt { \frac { n + 1 } { 2 } } \left ( \frac { 2 n + 3 } { 2 n + 1 } \right ) ^ { 2 q } \right ] \\
        = & \sum ^ \infty _ { n = 0 } ( 2 n + 1 ) ^ { 2 q } \phi _ { n } \bar { \psi } _ { n - 1 } \Tilde{\alpha}  _ n - \sum ^ \infty _ { n =  0 } ( 2 n + 1 ) ^ { 2 q } \phi _ { 
 n } \bar { \psi } _ { n + 1 }  \Tilde{\beta} _ n \\
        = &  \inpr [ q, \C ] { \phi } { T_{\Tilde{\alpha} ( 2q )} U _ { - 1 } \psi } +  \inpr  [ q, \C ] { \phi } { T_{\Tilde{\beta} ( 2q ) } U _ { + 1 } \psi } \\
        = & \inpr [ q, \C ] { \phi } { \left ( T_{\Tilde{\alpha} ( 2q )} U _ { - 1 } + T_{\Tilde{\beta} ( 2q ) } U _ { + 1 } \right ) \psi }
        \end{align*}
    Consequently, 
    \[ \partial ^ \ast = - \partial + T   \]
    where $ T  =  T_{\Tilde{\alpha} ( 2q )} U _ { - 1 } + T_{\Tilde{\beta} ( 2q ) } U _ { + 1 } $  which is a bounded linear operator on on $ \left ( \Sc _ q (\R; \C ) , \| \cdot \| _ {q, \C} \right ) $.
\end{proof}

Next, we establish the boundedness of several operators in the next few lemmas, from Lemma \ref{bndop1}
 to Lemma \ref{A12L3}.

 \begin{lemma}\label{bndop1}
 Fix $ w \in \C $ and $ q \in \R$. Then,  $  \Hop ^ { w } \partial    \Hop  ^ {
-w } - \partial   $ is a bounded linear operator on $ \left ( \Sc _ q(\R; \C)
, \| \cdot \| _  {q, \C} \right ) $.
\end{lemma}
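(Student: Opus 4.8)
The plan is to use that $\Hop$ is diagonalized by the Hermite basis and thereby reduce the operator to a finite combination of the scaling and shift operators already shown to be bounded in Lemma \ref{bnd-scale-shift-ops}. Write $S := \Hop^{w}\partial\,\Hop^{-w} - \partial$ and recall that $\Hop^{\pm w} h_n = (2n+1)^{\pm w} h_n$.

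First I would compute the action of $S$ on each basis element $h_n$. Applying $\Hop^{-w}$, then the derivative recurrence \eqref{derivative of h_n}, and then $\Hop^{w}$ to the resulting $h_{n\pm 1}$ terms gives
\[
\Hop^{w}\partial\,\Hop^{-w} h_n = \sqrt{\tfrac{n}{2}}\Bigl(\tfrac{2n-1}{2n+1}\Bigr)^{w} h_{n-1} - \sqrt{\tfrac{n+1}{2}}\Bigl(\tfrac{2n+3}{2n+1}\Bigr)^{w} h_{n+1}.
\]
Subtracting $\partial h_n = \sqrt{\tfrac{n}{2}}\,h_{n-1} - \sqrt{\tfrac{n+1}{2}}\,h_{n+1}$ and comparing the two coefficients with the sequences defined in Lemma \ref{seq-of-order-1/n}, I would identify
\[
S h_n = \widetilde{\alpha}_n(w)\, h_{n-1} + \widetilde{\beta}_n(w)\, h_{n+1}.
\]

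Next I would recognize the right-hand side as the action of a composition of the operators introduced just before Lemma \ref{bnd-scale-shift-ops}. Since $U_{+1} h_n = h_{n-1}$ and $U_{-1} h_n = h_{n+1}$, while $T_{\widetilde{\alpha}(w)} h_n = \widetilde{\alpha}_n(w) h_n$ and $T_{\widetilde{\beta}(w)} h_n = \widetilde{\beta}_n(w) h_n$, the identity above reads $S h_n = \bigl(U_{+1} T_{\widetilde{\alpha}(w)} + U_{-1} T_{\widetilde{\beta}(w)}\bigr) h_n$ for every $n$. By linearity and the density of $\operatorname{span}\{h_n\}$ in $\Sc_q(\R;\C)$, this yields the operator identity
\[
S = U_{+1} T_{\widetilde{\alpha}(w)} + U_{-1} T_{\widetilde{\beta}(w)} \quad \text{on } \Sc(\R;\C).
\]
Each of $U_{+1}, U_{-1}, T_{\widetilde{\alpha}(w)}, T_{\widetilde{\beta}(w)}$ is a bounded linear operator on $\bigl(\Sc_q(\R;\C), \norm[q,\C]{\cdot}\bigr)$ by Lemma \ref{bnd-scale-shift-ops}, and finite sums and compositions of bounded operators remain bounded; hence $S$ extends to a bounded linear operator on $\Sc_q(\R;\C)$, which is the claim.

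I do not expect a genuine obstacle here, since the result collapses onto Lemma \ref{bnd-scale-shift-ops}; the one point that must be handled carefully is the index bookkeeping in the first step, so that the coefficients of $S h_n$ land exactly on the sequences $\widetilde{\alpha}_n(w)$ and $\widetilde{\beta}_n(w)$ rather than on shifted variants, and so that the degenerate term at $n=0$ (where $\widetilde{\alpha}_0(w)=0$ and $U_{+1} h_0 = 0$) is consistent. Conceptually, the substance of the lemma is that conjugation by $\Hop^{w}$ perturbs $\partial$ only by the ratio-power factors $(\tfrac{2n\mp 1}{2n+1})^{w}$, whose deviation from $1$ is controlled precisely by the boundedness of $\widetilde{\alpha}_n(w), \widetilde{\beta}_n(w)$ established in Lemma \ref{seq-of-order-1/n}. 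For readers who prefer a self-contained estimate, one could alternatively bound $\norm[q,\C]{S\phi}^2 = \sum_n (2n+1)^{2q}\lvert \widetilde{\alpha}_{n+1}(w)\phi_{n+1} + \widetilde{\beta}_{n-1}(w)\phi_{n-1}\rvert^2$ directly, re-indexing the two shifted sums and absorbing the bounded ratios $(\tfrac{2n\pm 1}{2n+1})^{2q}$ into the constant, but invoking Lemma \ref{bnd-scale-shift-ops} is the cleaner route.
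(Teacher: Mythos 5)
Your proof is correct and takes essentially the same approach as the paper: both diagonalize $\Hop$ on the Hermite basis, apply the recurrence \eqref{derivative of h_n}, identify the perturbation with the shift and scaling operators of Lemma \ref{bnd-scale-shift-ops}, and conclude by density. Your factorization $U_{+1}T_{\widetilde{\alpha}(w)} + U_{-1}T_{\widetilde{\beta}(w)}$ is the same operator the paper writes as $-\bigl(T_{\widetilde{\alpha}(-w)}U_{-1} + T_{\widetilde{\beta}(-w)}U_{+1}\bigr)$, since $-\widetilde{\beta}_{n-1}(-w) = \widetilde{\alpha}_n(w)$ and $-\widetilde{\alpha}_{n+1}(-w) = \widetilde{\beta}_n(w)$.
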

\begin{proof}
For any $\phi \in \Sc(\R; \C)$,
  \begin{align*}
   \left (   \Hop^  { w } \partial   \Hop  ^ {  - w  } -  \partial \right ) \phi = &
\sum _ n ( 2  n + 1  ) ^ { 2  q }  \inpr [ 0,\C ] { \left (   \Hop^  { w } \partial   \Hop
^ {  - w  } -  \partial \right ) \phi } { h _ n } h _ n \\
    = & \sum _ n ( 2 n +  1 ) ^ { 2 q } \left (  \inpr[0, \C] {  \Hop ^ { w } \partial  \Hop ^
{ - w } \phi } { h _ n } -  \inpr [ 0,\C ] { \partial \phi } { h _ n } \right ) h _
n \\
    = & \sum _ n ( 2 n  + 1 ) ^ { 2 q  } \left ( (  2 n +  1)  ^ { w  }
\inpr[0,\C]{ \partial  \Hop ^ {  - w } \phi }  { h _  n }    -  \inpr [ 0,\C ] {
\partial\phi }{ h _
 n  }  \right ) h _  n
 \end{align*}
 Now, using \eqref{derivative of h_n}
 \begin{align*}
     &( 2 n +  1 ) ^ w \inpr [ 0,\C ] { \partial  \Hop ^ { - w } \phi } { h _ n }  -
\inpr [ 0,\C ] { \partial \phi } { h _ n }\\ 
= & - ( 2 n  + 1 ) ^  w  \inpr [ 0,\C ] {
 \Hop ^ { - w } \phi } { \partial h _ n } +  \inpr [ 0,\C ] { \phi } { \partial h _ n }
\\
     = & -( 2 n + 1 ) ^ w\inpr [ 0,\C ] {   \Hop ^ { - w } \phi } { \sqrt { \frac { n }
 { 2 } } h _ { n - 1 } + \sqrt { \frac { n +  1 } { 2 }  } h _ { n + 1  } } \\
     & +  \inpr [ 0,\C ]  { \phi } { \sqrt { \frac { n } { 2 } } h _ {  n - 1 } +
\sqrt { \frac { n
 + 1 }  { 2 }  } h _  { n  + 1  } } \\
      = & -( 2 n +  1 ) ^ { w }  \inpr [ 0,\C ] {  \phi } { \sqrt { \frac { n }  {
2 }  } (
 2 n  - 1 ) ^ { -\bar { w  } } h  _  { n  - 1  }  + \sqrt { \frac { n  + 1 }  {
2 }  } (
 2 n  + 3  ) ^ { - \bar { w }  } h _  { n +   1  } } \\
 & +   \inpr [ 0,\C ] { \phi } { \sqrt { \frac { n } { 2 } } h _ { n - 1 } + \sqrt
{ \frac { n + 1 } { 2 } } h _ { n + 1 } } \\
      = & -\sqrt { \frac { n  } {  2 }  } \left ( \left ( \frac{ 2  n + 1  }  {
2  n - 1
 } \right ) ^ w - 1 \right ) \inpr [ 0,\C ]  { \phi } {  h _  { n  - 1  }  } \\
      & - \sqrt { \frac { n + 1 } { 2 } } \left ( 1 - \left ( \frac { 2 n + 1 }
{ 2 n + 3 } \right ) ^ { w } \right ) \inpr [ 0,\C ] { \phi } { h _ { n + 1 } } \\
      = &-\left ( \widetilde { \alpha } _ n ( - w ) \phi _ { n  - 1 } +
\widetilde {\beta }  _ n ( - w ) \phi _  { n  + 1 } \right ),
 \end{align*}
where $\Tilde{\alpha} _ n$ and $\Tilde{\beta} _ n$ are as in Lemma \ref{seq-of-order-1/n}. Consequently, from our above computations and using Lemma \ref{bnd-scale-shift-ops},
\begin{align*}
     \left (  \Hop ^ { w } \partial  \Hop  ^ { - w  }  -\partial \right ) \phi = & -\sum
_ n
 ( 2 n  + 1 )  ^ {  2 q  } \left ( \widetilde { \alpha } _ n (-w) \phi _ {
n - 1
 } + \widetilde { \beta } _ n (-w) _ n \phi _ {  n +  1 } \right ) h _ n\\
= & -\sum _ n  \widetilde { \alpha } _ n (-w) ( 2  n + 1 )  ^ { 2  q  }
\phi _ {  n -
 1 }  h _  n + \sum _ n \widetilde { \beta } _ n (-w) ( 2  n  + 1  ) ^ {  2
q } \phi _  { n +
 1 } h  _ n   \\
= &- \left (  T _ {\widetilde { \alpha } ( -w )}  U _ {  - 1 }  +  T _
{\widetilde { \beta } ( -w )}  U  _ { + 1 } \right ) \phi.
 \end{align*}
 The result follows by density arguments.
  \end{proof}

  The next result is an analogue of \cite[Lemma 2.4]{MR3331916}.
 \begin{lemma}\label{bndblm1}
  Fix $ w \in \C $ and $ q \in \R$. The map $ \inpr [ q, \C ] { \partial ( \cdot ) } { \left (  \Hop ^  { w } \partial  \Hop
^ { -  w } -\partial \right ) ( \cdot ) } : \Sc (\R; \C ) \times \Sc (\R; \C ) \mapsto
\C $ defined by \[ ( \phi ,\psi ) \to  \inpr [ q, \C ] { \partial \phi } { \left
(  \Hop  ^ { w }  \partial  \Hop  ^ {  -  w }  - \partial \right ) \psi } \; \forall
\phi , \psi \in \Sc (\R; \C ) \]
  is a bounded bilinear form in $ \| \cdot \| _ {q, \C} $ and hence extends to a
bounded bilinear form on $ \left ( \Sc _ q (\R; \C ) , \| \cdot \| _ {q, \C} \right )
\times \left ( \Sc _ q (\R; \C ) , \| \cdot \| _ {q, \C} \right ) $.
\end{lemma}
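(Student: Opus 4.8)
The plan is to reduce the claim to the explicit coefficient computation already performed in Lemma \ref{bndop1} together with the decay estimates of Lemma \ref{seq-of-order-1/n}. Write $\phi = \sum_n \phi_n h_n$ and $\psi = \sum_n \psi_n h_n$ in $\Sc(\R;\C)$. Lemma \ref{bndop1} identifies the $n$-th Hermite coefficient of $(\Hop^w \partial \Hop^{-w} - \partial)\psi$ as $-(\widetilde{\alpha}_n(-w)\psi_{n-1} + \widetilde{\beta}_n(-w)\psi_{n+1})$, while \eqref{derivative of h_n} gives the $n$-th coefficient of $\partial\phi$ as $\sqrt{(n+1)/2}\,\phi_{n+1} - \sqrt{n/2}\,\phi_{n-1}$.

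First I would substitute both expansions into the series defining $\inpr[q,\C]{\cdot}{\cdot}$, obtaining
\[ \inpr[q,\C]{\partial\phi}{(\Hop^w\partial\Hop^{-w}-\partial)\psi} = -\sum_n (2n+1)^{2q}\left(\sqrt{\tfrac{n+1}{2}}\,\phi_{n+1} - \sqrt{\tfrac{n}{2}}\,\phi_{n-1}\right)\overline{\widetilde{\alpha}_n(-w)\psi_{n-1} + \widetilde{\beta}_n(-w)\psi_{n+1}}. \]
Expanding the product produces four families of terms, each of the schematic shape $\sum_n (2n+1)^{2q}\sqrt{n}\,c_n\,\phi_{n\pm1}\overline{\psi_{n\pm1}}$ with $c_n \in \{\widetilde{\alpha}_n(-w), \widetilde{\beta}_n(-w)\}$.

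The decisive point, and the one I would emphasize, is the matching of growth and decay: the factor $\sqrt{n}$ contributed by the derivative is in every term paired with an $\widetilde{\alpha}_n(-w)$ or $\widetilde{\beta}_n(-w)$, both of which are of order $1/\sqrt{n}$ by Lemma \ref{seq-of-order-1/n}; hence each product $\sqrt{n}\,c_n$ is bounded uniformly in $n$. After reindexing the shifted sums and using that the ratios $(2n\pm1)^{2q}/(2n+1)^{2q}$ and $(2n\pm3)^{2q}/(2n+1)^{2q}$ are bounded above and below for all $n \geq 1$ (the finitely many low-index terms being handled directly), each of the four families is dominated by a constant times $\sum_n (2n+1)^{2q}|\phi_{n\pm1}|\,|\psi_{n\pm1}|$. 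A Cauchy--Schwarz inequality in the weighted space $\ell^2((2n+1)^{2q})$ then bounds this by $C\,\norm[q,\C]{\phi}\,\norm[q,\C]{\psi}$, which is the asserted boundedness on $\Sc(\R;\C)$; the extension to $\Sc_q(\R;\C)$ follows by density.

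I expect the main obstacle to be purely bookkeeping rather than conceptual: one must track the index shifts carefully so that, after reindexing, each weight $(2n+1)^{2q}$ is correctly compared with the shifted weights, and one must verify that the $\sqrt{n}$ from the derivative is always paired with a sequence of order $1/\sqrt{n}$ rather than with one of the $O(1)$ sequences appearing in Lemma \ref{seq-of-order-1/n}. This favorable pairing is exactly what the structure established in Lemma \ref{bndop1} guarantees, so no genuinely new estimate is needed.
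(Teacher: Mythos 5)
Your proposal is correct and takes essentially the same route as the paper's proof: both expand the bilinear form in the Hermite basis, use the coefficient identification $-(\widetilde{\alpha}_n(-w)\psi_{n-1}+\widetilde{\beta}_n(-w)\psi_{n+1})$ from Lemma \ref{bndop1}, pair the $\sqrt{n}$ growth coming from $\partial\phi$ against the $O(1/\sqrt{n})$ decay of $\widetilde{\alpha}_n(-w)$ and $\widetilde{\beta}_n(-w)$ from Lemma \ref{seq-of-order-1/n}, and conclude via Cauchy--Schwarz and a density argument. The only difference is cosmetic: you make explicit the reindexing of the shifted sums and the boundedness of the weight ratios $(2n\pm1)^{2q}/(2n+1)^{2q}$, bookkeeping that the paper compresses into a single appeal to Cauchy--Schwarz.
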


 \begin{proof}
  For $ \phi , \psi \in \Sc (\R; \C ) $,
  \begin{align*}
      &\inpr [ q, \C ] { \partial \phi } { \left (  \Hop ^ { w }  \partial  \Hop ^ { - w }
-\partial \right ) \psi }\\
= & \sum ^ \infty _ {  n =  0 }  ( 2 n  + 1
 ) ^  { 2 q  } \inpr [ 0,\C ] {  \partial \phi } {  h _ n }  \overline { \inpr [ 0,\C
]  { \left (  \Hop ^
 { w } \partial  \Hop ^ {  - w  } - \partial \right ) \psi } { h  _ n } } \\
      = & \sum ^ \infty _ { n = 0 } ( 2 n + 1  ) ^ { 2 q } \inpr [ 0,\C ] { \phi }
{\partial h _ n }
      \overline { \inpr [ 0,\C ] { ( T _ {\widetilde { \alpha } ( -w )}  U _ {
 - 1 }  +  T _ {\widetilde { \beta } ( -w )}  U  _ { + 1 } ) \psi } { h  _
n  } } \\
       = &  \sum ^ \infty _ { n = 0 } ( 2 n + 1 ) ^ { 2 q } \inpr [ 0,\C ] { \phi }
{ \sqrt { \frac { n  } {  2 } }  h _ {  n -  1 } + \sqrt { \frac { n  + 1 }
 { 2  }  } h  _ {  n + 1  } } \overline { ( \widetilde { \alpha } _ n (-w)
\psi _ { n - 1 } + \widetilde { \beta } _ n (-w) \psi _ {  n + 1 } ) } \\
       = &  \sum ^ \infty _ {  n =  0 }  ( 2 n  + 1 )  ^ { 2 q } \left ( \sqrt {
\frac { n } { 2 } } \phi _ {  n - 1  } - \sqrt { \frac { n  + 1  } {  2 } } \phi
_ {
 n + 1 } \right ) ( \bar { \widetilde { \alpha }} _ n (-w)  \bar { \psi } _
{ n - 1 } + \bar {\widetilde { \beta }} _ n (-w) \bar { \psi } _ { n+ 1 }
)
  \end{align*}
  From  Lemma \ref{seq-of-order-1/n}, we have $ \widetilde { a } _  n(w) ,
\widetilde { b  } _  n (w)$ have the order $ \frac { 1  } { \sqrt { n  } } $. Now
using Cauchy-Schwarz inequality, we get  $ C > 0 $ such that \[ \left | { \inpr
{ \partial \phi } { \left (  \Hop ^ { w } \partial  \Hop ^ { - w } -  \partial \right )
\psi } } _ {q, \C} \right | \leq C \| \phi \| _ {q, \C} \| \psi \| _ {q, \C}.\]
The result follows by density arguments.
  \end{proof}

\begin{lemma}
Fix $w  \in \C $ and $q\in \R$. Then, 
\begin{enumerate}[label=(\roman*)]
    \item\label{Mix-op-1}   $ 2 \left   (  \Hop ^  w    \partial ^2 x ^ 2  \Hop  ^ { -  w  } -  \partial ^2 x ^
2 \right ) - 2 x \partial \left   (  \Hop ^  w    \partial x  \Hop  ^ { -  w  } -
\partial x \right )   $  is a bounded linear operator on $ \left ( \Sc _ q(\R;\C) ,\norm[q,\C]{ \cdot} \right ) $.
\item\label{A12L3}
    $ \left (  \Hop ^ { w } \partial ^
2 x   \Hop  ^ { - w } - \partial ^ 2 x     -x\partial \left (  \Hop ^ { w } \partial  \Hop ^ { - w } - \partial \right )  - \partial \left (  \Hop ^ {
w } \partial x  \Hop ^ { - w } - \partial x \right ) \right ) $
is a bounded linear operator   on $ \left ( \Sc _ q(\R;\C) ,\norm[q,\C]{ \cdot} \right ) $.
\item\label{bndL2}
   $  \Hop ^ { w }  \partial ^ 2  \Hop ^ { - w }
-  \partial ^ 2 $ is a bounded linear operator on $ \left ( \Sc _ q(\R;\C) ,\norm[q,\C]{\cdot } \right ) $.
\item\label{bndop2}
$  \Hop ^ {  w  } x \partial  \Hop ^ { -
w  } - x \partial $ is a bounded linear operator on $ \left ( \Sc _ q(\R; \C) ,\| \cdot \| _ {q, \C} \right ) $.
\end{enumerate}
 
   \end{lemma}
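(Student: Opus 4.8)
The plan is to extend the computation in the proof of Lemma~\ref{bndop1} to each of the four operators. The common mechanism is the following. For a polynomial $P$ in $x$ and $\partial$, the recurrences \eqref{derivative of h_n} express $P h_n$ as a finite linear combination $\sum_{j} c_j(n)\, h_{n+j}$ over a fixed range of (even or odd) shifts $j$, where the coefficients $c_j(n)$ are products of the square-root factors appearing in \eqref{derivative of h_n}. Since $\Hop^{\pm w} h_m = (2m+1)^{\pm w} h_m$, a direct computation gives
\[
\left(\Hop^{w} P \,\Hop^{-w} - P\right) h_n = \sum_{j} c_j(n)\left[\left(\frac{2(n+j)+1}{2n+1}\right)^{w} - 1\right] h_{n+j}.
\]
Reading off the coefficient of each $h_{n+j}$, this operator is a finite sum of scale operators composed with the shift operators $U_{\pm j}$, so by Lemma~\ref{bnd-scale-shift-ops} boundedness on $(\Sc_q(\R;\C), \norm[q,\C]{\cdot})$ reduces to verifying that the associated coefficient sequences are bounded; the conclusion then follows on $\Sc(\R;\C)$ and extends by density.

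For parts \ref{bndL2} and \ref{bndop2} I would take $P = \partial^2$ and $P = x\partial$ respectively. Applying \eqref{derivative of h_n} twice, $P h_n$ is supported on $h_{n-2}, h_n, h_{n+2}$ with coefficients $c_j(n)$ of size $O(n)$; however the bracketed factor above is $O(1/n)$, since its base tends to $1$, so each product $c_j(n)\big[(\cdots)^w - 1\big]$ is bounded. Explicitly, these coefficient sequences coincide, up to index shifts and multiplicative constants, with the order-one sequences $\widetilde{\gamma}_n(w), \widetilde{m}_n(w)$ and $\widetilde{l}_n(w)$ of Lemma~\ref{seq-of-order-1/n}, whose boundedness is already recorded there. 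Boundedness of the two operators then follows from Lemma~\ref{bnd-scale-shift-ops}.

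Parts \ref{Mix-op-1} and \ref{A12L3} are more delicate, and this is where the specific linear combinations matter. Taking $P$ to be the degree-four symbols $\partial^2 x^2$ (resp. $\partial^2 x$, $\partial x$, $\partial$) and applying \eqref{derivative of h_n} repeatedly, the individual conjugation differences $\Hop^w P \Hop^{-w} - P$, as well as the pieces obtained after composing with the external unbounded prefactors $x\partial$ and $\partial$, have Fourier coefficients of size $O(n)$ and are therefore \emph{not} bounded on $\Sc_q(\R;\C)$ in isolation. The point is that the precise combinations written in \ref{Mix-op-1} and \ref{A12L3} are arranged so that, mode by mode, the $O(n)$ contributions cancel and the surviving coefficient of each $h_{n+j}$ collapses to a second-difference expression of the shape $[(\cdots)^w - 2(\cdots)^w + 1]$ or $[(\cdots)^w - (\cdots)^w - (\cdots)^w + 1]$. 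These are exactly the bracketed structures of the sequences $a_n, b_n, c_n, l_n, m_n, t_n, \alpha_n, \beta_n, \gamma_n$ in Lemma~\ref{seq-of-order-1/n^2}, which are shown there to be bounded (indeed of order $1$, $1/\sqrt{n}$, or $1/n^{3/2}$). Identifying the surviving coefficients with these sequences and invoking Lemma~\ref{bnd-scale-shift-ops} yields the claimed boundedness.

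The main obstacle is the bookkeeping in parts \ref{Mix-op-1} and \ref{A12L3}: one must expand each constituent operator on $h_n$ fully, keeping track of how the external prefactors $x\partial$ and $\partial$ shift indices when composed with the conjugation differences, collect the coefficient of each $h_{n+j}$ (with $j \in \{-4,-2,0,2,4\}$ for \ref{Mix-op-1} and the analogous odd range $j \in \{-3,-1,1,3\}$ for \ref{A12L3}), and verify that the leading $O(n)$ terms cancel in every mode. Once this algebraic cancellation is confirmed and the remainders are matched with the second-difference sequences of Lemma~\ref{seq-of-order-1/n^2}, boundedness is immediate; the real content lies entirely in confirming the cancellation.
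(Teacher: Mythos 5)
Your proposal is correct and follows essentially the route the paper intends: although the paper states this lemma without writing out a proof, the surrounding machinery it sets up --- the mode-by-mode conjugation computation of Lemma \ref{bndop1}, the scale and shift operators of Lemma \ref{bnd-scale-shift-ops}, and especially the second-difference sequences $a_n, b_n, c_n, l_n, m_n, t_n, \alpha_n, \beta_n, \gamma_n$ of Lemma \ref{seq-of-order-1/n^2} --- is exactly the scaffolding your argument invokes. In particular, your key observation that the individual conjugation differences in parts \ref{Mix-op-1} and \ref{A12L3} have coefficients of size $O(n)$ and are unbounded in isolation, with the stated combinations arranged so that the leading terms cancel mode by mode and the surviving coefficients collapse to the bounded brackets of the form $\left[(\cdots)^w - 2(\cdots)^w + 1\right]$, is precisely the content that Lemma \ref{seq-of-order-1/n^2} was built to supply.
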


Now, we are ready to prove one of the main results, which is an extension of \cite[Theorem 4.2]{MR3331916}).

\subsection{First Approach }\label{First approach}
 
\begin{proof}[Proof of Theorem \ref{Mono-ineq-affine-Form}]
First, we work with Complex-valued functions and then, we restrict our attention to the real-valued case.

The pair of operators $(L ^ \ast, A ^ \ast) $ have the following form
\[ L^\ast \phi = \frac{ 1 }{ 2 }  \partial ^ 2 \left ( (\alpha x + \beta )^2
\phi \right ) - \partial \left ( ( \gamma x + \delta  ) \phi \right ), \quad A ^
\ast \phi = - \partial \left ( (\alpha x + \beta ) \phi\right ), \quad \phi \in
\Sc(\R;\C).\]
Fix $\phi \in
\Sc(\R;\C)$. Motivated by the approach in \cite[Theorem 2.1]{MR1999259}, we consider the complex valued function \[ F ( w ) = 2 \inpr [ 0,\C ] { \phi }{  \Hop
^ { \Bar { w } } L ^ \ast   \Hop ^ { - \Bar { w } } \bar { \phi } }   + \inpr [ 0,\C ]
 {  \Hop ^ { w } A ^ \ast   \Hop ^ { - w } \phi } {  \Hop ^ { \Bar{ w }  } A ^ \ast   \Hop ^ { -
\Bar{ w } } \bar { \phi } }, \forall w \in \C.\]
We expand $F(w)$ in terms of bounded operators for every fixed $w$. Consequently, the required Monotonicity inequality \eqref{Monotoniticity-inequality-chap7} follows as a special case by taking $w = p$.

The above expression for $F(w)$ is simplified as
\begin{align*}
     F ( w ) = &  2\inpr [ 0,\C ] { \phi } {  \Hop ^ { \bar { w } }  L^\ast   \Hop
 ^ { - \bar { w } } \phi   }  + \inpr [ 0,\C ] {  \Hop ^ { w } A ^ \ast   \Hop ^ { - w }
\phi  } {  \Hop ^ { \Bar{ w } } A ^ \ast   \Hop ^ { - \Bar { w } } \phi ) } \\
 = & \left (  2\inpr [ 0,\C ] { \phi } {  L ^ \ast  \phi   }  + \inpr [ 0,\C ] {  A ^
\ast   \phi  } { A ^ \ast   \phi ) } \right ) +  \inpr [ 0,\C ] { \left (  \Hop ^ { w }
A ^ \ast   \Hop ^ { - w } - A ^ \ast  \right )  \phi  } {  \left (  \Hop ^ { \Bar{ w } }
A ^ \ast   \Hop ^ { - \Bar { w } } - A ^ \ast  \right )  \phi ) } \\
 & + 2 \left (  \inpr [ 0,\C ] { \phi } { \left (  \Hop ^ { \bar { w } }  L ^ \ast   \Hop
 ^ { - \bar { w } } -  L ^ \ast  \right ) \phi   }  +   \inpr [ 0,\C ] {  A ^ \ast
\phi  } {\left (  \Hop ^ { \Bar{ w } } A ^ \ast  \Hop ^ { - \Bar { w } } - A ^ \ast
\right ) \phi ) }  \right )\\
 = & \left (  2\inpr [ 0,\C ] { \phi } {  L ^ \ast  \phi   }  + \inpr [ 0,\C ] {  A ^
\ast  \phi  } { A ^ \ast   \phi ) } \right ) +  \inpr [ 0,\C ] { \left (  \Hop ^ { w }
A ^ \ast   \Hop ^ { - w } - A ^ \ast  \right )  \phi  } {  \left (  \Hop ^ { \Bar{ w } }
A ^ \ast   \Hop ^ { - \Bar { w } } - A ^ \ast  \right )  \phi ) } \\
 & +   \inpr [ 0,\C ] { \phi } {  \left ( 2 \left (  \Hop ^ { \bar { w } }  L ^ \ast
 \Hop
 ^ { - \bar { w } } -  L ^ \ast  \right ) + 2 A  \left (  \Hop ^ { \Bar{ w } } A ^
\ast   \Hop ^ { - \Bar { w } } - A ^ \ast    \right )   \right )   \phi   }.
\end{align*}
However, $\forall w \in \C$
\begin{align*}
 &\left (  \left   (  \Hop ^  w    L ^ \ast   \Hop  ^ { -  w  } -  L ^ \ast  \right ) + A
 \left   (  \Hop ^  w    A ^ \ast   \Hop  ^ { -  w  } -  A ^ \ast  \right )\right ) \phi\\
= & \alpha ^ 2 \left (   \left   (  \Hop ^  w     \frac{ 1 }{ 2 } \partial ^ 2 x ^ 2
 \Hop  ^ { -  w  } -   \frac{ 1 }{ 2 } \partial ^ 2 x ^ 2 \right ) +  A _ 1 \left
(  \Hop ^  w    A ^ \ast _1  \Hop  ^ { -  w  } -  A ^ \ast _ 1 \right ) \right ) \phi\\
  +& \alpha \beta (  \left   (  \Hop ^  w   \frac{ 1 }{ 2 } \partial ^ 2 x  \Hop  ^ { -
w  } - \frac{ 1 }{ 2 } \partial ^ 2 x \right )  +  A _ 1 \left   (  \Hop ^  w    A
^ \ast _2  \Hop  ^ { -  w  } -  A ^ \ast _ 2 \right ) + A _ 2 \left   (  \Hop ^  w    A
^ \ast _1  \Hop  ^ { -  w  } -  A ^ \ast _ 1 \right ) ) \phi\\
  +& \beta  ^ 2 \left(  \left   (  \Hop ^  w    \frac{1}{2}\partial ^2  \Hop  ^ { -  w
} -  \frac{1}{2}\partial ^2 \right ) +  A _ 2 \left   (  \Hop ^  w    A ^ \ast _2  \Hop
^ { -  w  } -  A ^ \ast _ 2 \right ) \right ) \phi\\
 +& \alpha \left (  \Hop ^ w A ^ \ast _ 1   \Hop ^ {-w} -A ^ \ast _ 1  \right )  \phi
+ \beta \left (  \Hop ^ w A ^ \ast _ 2      \Hop ^ {-w}   -  A  ^ \ast _ 2    \right )
\phi,
\end{align*}
 where \[\quad L ^ \ast _ 3 \phi = \frac{ 1 }{ 2 } \partial ^ 2 x
\phi,\quad A ^ \ast _ 1 \phi = - \partial ( x \phi) ,\quad A ^ \ast _ 2 \phi =
- \partial \phi, \forall \phi \in \Sc(\R; \C).\]
Consequently, the relevant bound follows by Cauchy-Schwarz inequality and the lemmas Lemma \ref{bndop1}
 to Lemma \ref{A12L3}.
 \end{proof}

\subsection{Second Approach}\label{second approach}

\begin{lemma}\label{Mono-ineq-gen-functin-P012}
    Let $\sigma, \;  b $ be real valued smooth functions with bounded
derivatives. Recall the operators $ A ^ * , L ^ * $ given by
\begin{align*}
          A ^ \ast \phi = & - \partial ( \sigma \phi ) &   L ^ \ast \phi = &
\frac { 1 }
 { 2 }  \partial ^ 2
( \sigma ^ 2 \phi ) - \partial ( b \phi ).
     \end{align*}
     Then, for $ p = 0 , 1$ and  $ 2 $, the Monotonicity inequality holds. i.e.
     \[ 2 \inpr [ p ]  {  \phi } { L ^ \ast \phi } + \inpr [ p ] {  A ^ \ast
\phi } { A ^ \ast \phi } \leq C \| \phi \| ^ 2 _ p , \quad \forall \phi \in \Sc(\R).\]
\end{lemma}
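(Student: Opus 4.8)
The plan is to settle $p=0$ by a direct integration-by-parts computation in $\mathcal{L}^2(\R) = \Sc_0(\R)$ and then to bootstrap the cases $p=1,2$ by conjugating with the Hermite operator, using the isometry $\|\phi\|_p = \|\Hop^p\phi\|_0$ and, more precisely, $\inpr[p]{u}{v} = \inpr[0]{\Hop^p u}{\Hop^p v}$ from Proposition \ref{Hop-properties}. The $p=0$ inequality will serve as both the first case and the structural engine for $p=1,2$.

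For $p=0$, I would integrate by parts twice in $\inpr[0]{\phi}{\tfrac12\partial^2(\sigma^2\phi)}$ and once in $\inpr[0]{\phi}{-\partial(b\phi)}$, and expand $\|A^\ast\phi\|_0^2 = \int_\R(\sigma'\phi+\sigma\phi')^2\,dx$. The genuinely dangerous contributions, namely $\int_\R\sigma^2(\phi')^2\,dx$ and the cross terms $\int_\R\sigma\sigma'\phi\phi'\,dx$, cancel between $2\inpr[0]{\phi}{L^\ast\phi}$ and $\|A^\ast\phi\|_0^2$, leaving
\[ 2\inpr[0]{\phi}{L^\ast\phi} + \|A^\ast\phi\|_0^2 = \int_\R\big((\sigma')^2 - b'\big)\phi^2\,dx \le C\|\phi\|_0^2, \]
with $C = \|(\sigma')^2 - b'\|_\infty < \infty$ because $\sigma$ and $b$ have bounded derivatives. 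This cancellation of the second-order and first-order terms is the concrete manifestation of the monotonicity structure and is what must be reproduced for $p=1,2$.

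For $p=1,2$, writing $B := \Hop^p$, the left-hand side equals $2\inpr[0]{B\phi}{BL^\ast\phi} + \|BA^\ast\phi\|_0^2$. I would first expand $B = (x^2-\partial^2)^p$ by the Leibniz rule and commute it through $L^\ast$ and $A^\ast$, isolating the principal part $2\inpr[0]{B\phi}{L^\ast B\phi} + \|A^\ast B\phi\|_0^2$. Since $B\phi = \Hop^p\phi \in \Sc(\R)$ is again real-valued and Schwartz, this principal part is exactly the $p=0$ inequality applied to $B\phi$, hence is bounded by $C\|B\phi\|_0^2 = C\|\phi\|_p^2$. What remains are the commutator terms $[\,\Hop^p, L^\ast\,]$ and $[\,\Hop^p, A^\ast\,]$ inserted into the two bilinear forms; after integrating by parts to balance the derivative counts and using the same square-completion cancellation (now propagated through $\Hop^p$), these should reduce to a finite sum of good terms in the sense of Definition \ref{good-term-chap7}, each of combined degree $\le 4p$, and thus be bounded by Lemma \ref{good-term-bnd-chap7}.

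The main obstacle is precisely this last reduction: one must verify, term by term and for each of $p=1$ and $p=2$, that once the principal $p=0$ part is removed, every surviving contribution of the Leibniz expansion of $\Hop^p$ acting on $\sigma^2\phi$ and on $\sigma\phi$ has its combined order brought down to $\le 4p$, so that it qualifies as an FSGT. The conceptual reason this succeeds is that the monotonicity identity is the vanishing of the joint principal symbol $2\,\mathrm{Re}(\text{symbol of }L^\ast) + |\text{symbol of }A^\ast|^2$, which is preserved under conjugation by the elliptic operator $\Hop^p$; but confirming it concretely requires careful bookkeeping, and the volume of intermediate terms — already $\Hop^2 = x^4 - 2x^2\partial^2 - 4x\partial - 2 + \partial^4$ generates many — makes $p=2$ the heaviest of the base cases.
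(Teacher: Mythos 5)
Your proposal is correct, and your $p=0$ step — ending in the exact identity $2\inpr[0]{\phi}{L^\ast\phi}+\norm[0]{A^\ast\phi}^2=\int_{\R}\bigl((\sigma')^2-b'\bigr)\phi^2\,dx$ — is precisely the integration-by-parts argument the paper only cites. For $p=1,2$, though, your organization differs from the paper's own proof of this lemma: the paper does not isolate a principal part at all, but expands $\Hop A^\ast\phi$ and $2\Hop L^\ast\phi$ outright and cancels the six non-good terms (such as $\norm[0]{\sigma x^2\partial\phi}^2$, $\norm[0]{\sigma\partial^3\phi}^2$, $\inpr[0]{\sigma^2\partial^2\phi}{x^2\partial^2\phi}$, $\inpr[0]{\sigma\partial^2\sigma\,\phi}{x^4\phi}$, etc.) directly between $\norm[0]{\Hop A^\ast\phi}^2$ and $2\inpr[0]{\Hop\phi}{\Hop L^\ast\phi}$, leaving an FSGT. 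Your commutator decomposition — principal part $2\inpr[0]{B\phi}{L^\ast B\phi}+\norm[0]{A^\ast B\phi}^2$ disposed of by the $p=0$ case applied to $B\phi=\Hop^p\phi$, plus remainders — is instead the template the paper reserves for the inductive step of Theorem \ref{Mono-ineq-gen-functin} in Subsection \ref{second approach}. Your route buys reuse of the $p=0$ identity and a uniform treatment of $p=1$ and $p=2$ (the paper's written proof only displays $p=0,1$); the paper's version makes all cancellations explicit. The one point to handle with care: the commutator remainders are \emph{not} individually FSGT. The pairings $2\inpr[0]{B\phi}{[B,L^\ast]\phi}$ and $2\inpr[0]{A^\ast B\phi}{[B,A^\ast]\phi}$ each contain contributions of combined order $4p+1$ — the analogues of the paper's $S_5',S_6'$ and $S_5'',S_6''$ — which become good only after the exact cross-cancellations $S_5'+S_6'=0$ and $S_5''+S_6''=0$ between the two bilinear forms; a literal term-by-term check inside each form would stall, so your bookkeeping must pair the $L^\ast$-commutator term against the $A^\ast$ cross term, exactly as the paper does. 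Also remember the indicator penalty $1_{\{0\}}(k_i)$ in Definition \ref{good-term-chap7}: each occurrence of underived $\sigma$ (which may grow linearly) consumes an extra unit of the $4p$ budget in your degree count.
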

\begin{proof}
The case $p = 0$ is known (see, for example, \cite{MR3331916}) and follows from an integration by parts argument. Now, consider the case $p = 1$.

Note that $2 \inpr [ 1 ]  {  \phi } { L ^ \ast \phi } + \inpr [ 1 ] {  A ^ \ast
\phi } { A ^ \ast \phi } = 2 \inpr [ 0 ]  {\Hop  \phi } {\Hop L ^ \ast \phi } + \inpr [ 0 ] {\Hop  A ^ \ast
\phi } {\Hop A ^ \ast \phi }$. We have \[ \Hop  A ^ *  \phi = \left ( \sigma x ^ 2 \partial \phi - \sigma
\partial ^ 3 \phi \right ) + \left ( \partial \sigma .x ^ 2 . \phi - 3 \partial
\sigma . \partial ^ 2 \phi \right ) - 3 \partial ^ 2 \sigma \partial \phi -
\partial ^ 3 \sigma.\phi, \]
    and hence
    \begin{align*}
        \inpr [ 0 ] {   \Hop A ^ * \phi } {  \Hop A ^ *   { \phi } } &=   \inpr [ 0 ]
 {  \sigma x ^ 2 \partial \phi - \sigma \partial ^ 3 \phi } {  \sigma x ^ 2
\partial  { \phi }  - \sigma \partial ^ 3   { \phi } } + 2 \inpr [ 0 ] {
\sigma x ^ 2 \partial \phi - \sigma \partial ^ 3 \phi } { \partial \sigma x ^ 2
 { \phi }  - 3 \partial \sigma  \partial ^ 2   { \phi } }\\
        &- 2 \inpr [ 0 ] { \sigma x ^ 2  \partial \phi - \sigma \partial ^ 3
\phi } { 3 \partial ^ 2 \sigma \partial   { \phi } } + FSGT
    \end{align*}
We focus on the first three terms. Note that
\begin{enumerate}[label=(\roman*)]
    \item \begin{align*}
          &\inpr [  0 ] {  \sigma x ^ 2 \partial \phi - \sigma  \partial ^ 3 \phi
} { \sigma x ^ 2 \partial   { \phi }  - \sigma \partial ^ 3   { \phi } }\\
&=  \left \| \sigma x ^ 2 \partial \phi \right \| ^ 2 _ 0 + \left\| \sigma
\partial ^ 3 \phi \right\| ^ 2 _ 0  - 2 \inpr [ 0 ] {  \sigma x ^ 2  \partial
\phi  } { \sigma \partial ^ 3   {  \phi } }\\
          &=  \left\| \sigma  x ^ 2 \partial \phi \right \| ^ 2 _ 0 + \left \|
\sigma \partial ^ 3 \phi \right\| ^ 2 _ 0 - 2 \inpr [ 0 ] { \sigma \partial ^ 2
\sigma \partial \phi  } { x ^ 2 \partial
    { \phi }  } + 2 \inpr [ 0 ] { \sigma^2 \partial ^ 2 \phi } {   x ^ 2
\partial ^ 2   {  \phi } }+ FSGT
      \end{align*}

    \item \begin{align*}
          &2 \inpr [ 0  ] { \sigma x ^ 2 \partial \phi - \sigma \partial ^ 3
\phi } { \partial \sigma   x  ^ 2    { \phi }   - 3 \partial \sigma  \partial
^ 2    { \phi } }\\
& =  2 \inpr [ 0 ] { \sigma
 x ^ 2 \partial  \phi } { \partial \sigma . x ^ 2 .   { \phi }   } - 6 \inpr
[ 0 ] { \sigma x ^ 2 \partial \phi } { \partial \sigma . \partial ^ 2   {
\phi }  } - 2 \inpr[ 0 ] { \sigma \partial ^ 3 \phi } { \partial \sigma . x ^ 2
.   { \phi }   }\\
          & \quad\quad\quad\quad\quad\quad\quad\quad\quad\quad\quad\quad\quad\quad\quad\quad\quad+ 6 \inpr [ 0 ] { \sigma \partial ^ 3 \phi } { \partial \sigma .
\partial ^ 2    { \phi }  } \\
          &=  - \inpr [ 0 ] { \sigma \partial^2 \sigma \phi} { x ^ 4   { \phi
} }  - 3 \inpr [ 0 ] { \sigma \partial ^ 2 \sigma \partial ^ 2 \phi } {
\partial ^ 2   { \phi } }+ FSGT
      \end{align*}
and
\item \begin{align*}
          - 2 \inpr [ 0 ] { \sigma x ^ 2 \partial \phi - \sigma \partial ^ 3
\phi } {   3 \partial ^ 2  \sigma  \partial   { \phi } } = & - 6 \inpr [ 0 ]
{ \sigma x ^ 2 \partial \phi } { \partial ^ 2 \sigma  \partial   { \phi } } +
6 \inpr [ 0 ] {  \sigma \partial ^ 3 \phi } { \partial ^ 2 \sigma \partial  
{  \phi } }\\
          = & -  6 \inpr [ 0 ] { \sigma \partial ^ 2 \sigma  \partial \phi } {
x ^ 2 \partial   { \phi }  } - 6 \inpr [ 0 ] { \sigma \partial ^ 2 \sigma
\partial ^ 2 \phi } { \partial^2   { \phi } }+ FSGT.
      \end{align*}
\end{enumerate}
Consequently,
      \begin{align*}
           \inpr[ 0 ] {  \Hop A ^ * \phi } {  \Hop A ^ *   { \phi } } = & \left \|
\sigma  x ^ 2 \partial \phi \right \| ^ 2 _ 0 + \left \| \sigma \partial ^ 3
\phi \right \| ^ 2 _ 0 +  2 \inpr [ 0 ] { \sigma ^ 2 \partial ^ 2 \phi } { x ^ 2
\partial ^ 2   { \phi } } - \inpr [ 0 ] { \sigma \partial ^ 2 \sigma \phi } {
x ^ 4   { \phi } } \\
&\quad\quad\quad\quad- 8 \inpr [ 0 ] { \sigma \partial ^ 2 \sigma \partial \phi
 } { x ^ 2 \partial   { \phi } } 
            - 9 \inpr [ 0 ] { \sigma \partial ^ 2 \sigma \partial ^ 2 \phi } {
\partial ^ 2   { \phi }  } + FSGT
      \end{align*}
Again,
      \begin{align*}
          2  \Hop L ^ * \phi = & \left ( x ^ 2 \sigma ^ 2 \partial ^ 2 \phi - \sigma
^ 2 \partial ^ 4 \phi \right ) + \left ( 4 \sigma x ^ 2 \partial \sigma \partial
\phi - 2 x ^ 2 b \partial \phi - 8 \sigma \partial \sigma \partial ^ 3 \phi + 2
b \partial ^ 3 \phi \right ) \\
          &+ \left ( 2  \sigma \partial ^ 2 \sigma x ^ 2 \phi - 12 \sigma
\partial \sigma \partial ^ 2 \phi \right ) +\left ( \text{Terms of order} \leq 2
\right ).
      \end{align*}
Recalling that $\Hop \phi =    x ^ 2 \phi - \partial ^ 2 \phi$, we have
      \begin{align*}
          &2 \inpr [ 0 ] {   \Hop \phi } {  \Hop L ^ *   { \phi } }\\
          &\quad\quad\quad\quad=   \inpr [ 0 ] {
x ^ 2 \phi - \partial ^ 2 \phi } { x ^ 2 \sigma ^ 2 \partial ^ 2   { \phi } -
\sigma ^ 2 \partial ^ 4   { \phi } } + \inpr [ 0 ] { x ^ 2 \phi - \partial ^
2 \phi } { 2 \sigma \partial ^ 2 \sigma x ^ 2   { \phi }  - 12 \sigma
\partial \sigma \partial ^ 2   { \phi } } \\
          &\quad\quad\quad\quad+ \inpr [ 0 ] { x ^ 2 \phi - \partial ^ 2 \phi } { 4 \sigma  x ^ 2
\partial \sigma \partial   { \phi } - 2 x ^ 2 b  \partial   { \phi } - 8
\sigma \partial \sigma \partial ^ 3   { \phi } + 2 b \partial ^ 3   { \phi
} } + FSGT.
      \end{align*}
We focus on the first three terms. Note that
\begin{enumerate}[label=(\roman*)]
    \item \begin{align*}
          &\inpr [ 0 ] { x  ^ 2 \phi - \partial ^ 2 \phi } { x ^ 2 \sigma ^  2
\partial ^ 2   { \phi } -  \sigma ^ 2 \partial ^ 4   { \phi } }\\
&\quad\quad\quad\quad\quad\quad\quad=   -
\left ( \left \| \sigma  x ^ 2  \partial
 \phi \right \| ^ 2 _ 0 + \left \| \sigma \partial ^ 3 \phi \right\| ^ 2 _ 0 +
2 \inpr [
 0 ] { \sigma ^  2 \partial  ^ 2  \phi} {   x ^ 2  \partial ^ 2   { \phi } }
\right ) + \inpr [ 0 ] { \sigma \partial ^ 2 \sigma \phi } { x ^  4   { \phi
}  } \\
          & \quad\quad\quad\quad\quad\quad\quad\quad\quad+ 4 \inpr [ 0  ] { \sigma \partial ^ 2 \sigma \partial \phi } { x ^
2 \partial   { \phi } } + \inpr [ 0 ] { \sigma \partial ^ 2 \sigma \partial ^
2 \phi } { \partial ^ 2  { \phi } }+ FSGT
      \end{align*}
    \item \begin{align*}
          &\inpr [ 0 ] {  x ^ 2 \phi - \partial ^ 2 \phi } { 2 \sigma \partial ^
2 \sigma x ^ 2   { \phi }  - 12 \sigma \partial \sigma \partial ^ 2   {
\phi } }\\
&=  2 \inpr [ 0 ] { \sigma \partial ^ 2 \sigma \phi } { x ^ 4   {
\phi } } + 12 \inpr [ 0  ] { \sigma \partial ^ 2 \sigma \partial ^ 2 \phi } {
\partial ^ 2   { \phi } } + 14 \inpr [ 0 ] { \sigma \partial ^ 2 \sigma
\partial \phi } { x ^ 2 \partial   { \phi } }
     + FSGT \end{align*}
    and
    \item \begin{align*}
         &\inpr [ 0 ] {  x ^ 2 \phi - \partial ^ 2 \phi } { 4 \sigma x ^ 2
\partial \sigma \partial   { \phi }  - 2  x  ^ 2 b \partial   { \phi }  -
8 \sigma \partial \sigma \partial ^ 3   { \phi }  + 2 b \partial ^ 3   {
\phi }  }\\
&\quad\quad\quad\quad\quad\quad\quad\quad\quad\quad\quad=  - 2 \inpr [ 0 ] { \sigma \partial ^ 2 \sigma \phi } { x ^ 4   {
\phi } } - 10 \inpr [ 0 ] { \sigma \partial ^ 2 \sigma \partial \phi } { x ^ 2
\partial   { \phi } } \\
         & \quad\quad\quad\quad\quad\quad\quad\quad\quad\quad\quad\quad\quad\quad\quad\quad- 4  \inpr [ 0 ] { \sigma \partial ^ 2 \sigma \partial ^ 2 \phi } {
\partial ^ 2   { \phi } }+ FSGT.
      \end{align*}
\end{enumerate}
    Consequently,
      \begin{align*}
           2  \inpr [ 0  ] {   \Hop \phi } {  \Hop L ^  *   { \phi }  } = & - \left (
 \left \| \sigma x ^ 2 \partial \phi \right\| ^ 2 _ 0 + \left \| \sigma \partial
^ 3 \phi \right \| ^ 2 _ 0 +  2 \inpr [ 0 ] { \sigma ^ 2 \partial ^ 2 \phi }{ x
^ 2 \partial ^ 2   { \phi }  } \right )  + \inpr [ 0  ] {  \sigma \partial ^
2  \sigma. \phi } { x ^ 4   { \phi } }\\
&+ 8 \inpr [ 0 ] { \sigma \partial ^ 2
\sigma \partial \phi  } { x ^ 2 \partial   { \phi }  }  + 9 \inpr [ 0  ]{  \sigma \partial ^ 2 \sigma \partial ^ 2 \phi } {
\partial ^ 2   { \phi } } + FSGT,
      \end{align*}
and hence, $2 \inpr [ 0 ] {  \Hop \phi } {  \Hop L ^ \ast   { \phi } } + \inpr [
0 ] {  \Hop A ^ \ast \phi } {   \Hop A ^ \ast   { \phi} }$ is an FSGT. The result follows.
\end{proof}

\begin{proof}[Proof of Lemma \ref{H^P-lemma}]
We apply the principle of mathematical induction. In Lemma \ref{Mono-ineq-gen-functin-P012}, we have the result for $p = 2$. We treat this as the base case for our induction. 

Now, suppose result is true for $ p = k $,  i.e.
    \begin{align*}
          \Hop ^  k   \phi =& \sum _ { j  = 0  } ^  { k }    (  - 1 )  ^ {  k -  j }
\binom{ k } { j  }   x  ^  { 2  j }  \partial ^ { 2 ( k - j  ) }  \phi\\
&-  2 \sum _ { r  = 1 } ^ { k -  1 }   \left ( \sum_ { j = 1 } ^ { r }   (- 1 )
^ { r -
 j }  \binom {  r } {  j}   .   2 j   .   x ^ { 2  j - 1 }  \partial ^ { 2 ( r
-j ) + 1 }    \Hop ^ { k - 1 - r }  \phi  \right ) + (\text{Terms of order} \leq 2 k - 4 )\\
    \end{align*}
    Now, for $ p  =   k + 1 $   we have,
    \begin{align*}
         \Hop ^ { k + 1 }  \phi = & \sum _ { j = 0 } ^ { k } ( - 1 ) ^ { k - j }
\binom { k }{ j }  \Hop ( x ^ { 2 j } \partial ^ { 2 ( k - j ) }  \phi )\\
&\quad\quad\quad\quad\quad-  2 \sum
_ { r = 1 } ^ { k - 1 } \left ( \sum _ { j = 1 } ^ { r }  ( - 1 ) ^ { r - j }
\binom { r } { j }   .  2j .  \Hop (  x ^ { 2 j - 1 } \partial ^ { 2  ( r - j ) + 1
}    \Hop ^ { k - 1 - r }  \phi )  \right ) \\
        = & \sum _ { j = 0 } ^ { k } (- 1 )  ^ { k - j } \binom { k }  { j }  \Hop (
x ^ { 2 j } \partial ^ {2 (k - j ) } \phi )\\
&\quad\quad\quad\quad\quad\quad- 2 \sum _ { r = 1 } ^ { k - 1 }
\left ( \sum _ { j = 1 } ^ { r } ( - 1 ) ^ {r - j } \binom { r } { j } . 2 j . x
^ { 2 j - 1 } \partial ^ { 2 ( r - j ) + 1 }  \Hop ^ { k - r } \phi  \right ).
    \end{align*}
Since,
    \begin{align*}
        \sum _ { j =  0 } ^ { k } ( - 1 ) ^ { k - j }\binom { k } { j }&  \Hop ( x ^
{ 2 j } \partial ^ { 2 ( k - j ) } \phi )\\
 =&   \sum _ { j = 0 } ^ { k } ( - 1 )
^ { k-j } \binom { k } { j }  \Hop \left ( x ^  { 2 j } \partial ^ { 2  ( k - j ) }
\phi \right ) \\
        = & \sum _ { j = 0 } ^ { k } ( - 1 ) ^ { k - j } \binom { k } { j }
\left ( x ^ { 2 j + 2 } \partial ^ { 2 ( k - j ) } \phi - \partial ^ 2 ( x ^ { 2
j } \partial ^ { 2 ( k - j ) } \phi ) \right ) \\
        =& \sum _ { j = 0 } ^ { k } ( - 1 ) ^ { k - j } \binom { k } { j } \left
( x ^ { 2 j + 2 } \partial ^ { 2 ( k - j ) } \phi - x ^ { 2 j } \partial ^ { 2 (
k + 1 - j ) } \phi - 2 . 2 j . x ^ { 2 j - 1 } \partial ^ { 2 ( k - j ) + 1 }
\phi \right ) \\
         = &   \sum _ { j = 0 } ^ { k } ( - 1 ) ^ { k - j } \binom { k }{ j } x
^ { 2 j + 2 } \partial ^ { 2 ( k - j ) } \phi - \sum _ { j = 0 } ^ { k } ( - 1 )
^ { k - j } \binom{ k } { j } x ^ { 2 j } \partial ^ { 2 ( k + 1 - j ) } \phi \\
          &- 2 \sum _ { j = 0 } ^ { k } ( - 1 ) ^ { k - j } \binom { k } { j } 2
j . x ^ { 2 j - 1 } \partial ^ { 2 ( k - j ) + 1 } \phi\\
         =& \sum _ { j = 1 } ^ { k + 1 } ( - 1 ) ^ { k - j + 1 } \binom { k } {
j - 1 } x ^ { 2 j } \partial ^ { 2 ( k - j + 1 ) } \phi + \sum _ { j = 0 } ^ { k
} ( - 1 ) ^ { k + 1 - j } \binom { k } { j } x ^ { 2 j } \partial ^ { 2 ( k + 1
- j ) } \phi\\
         & - 2 \sum _ { j = 0 } ^ { k } ( - 1 ) ^ { k - j } \binom { k } { j } 2
j . x ^ { 2 j - 1 } \partial ^ { 2 ( k - j ) + 1 } \phi \\
         = &   ( - 1 ) ^ { k + 1 } \partial ^ { 2 ( k + 1 ) } \phi + \sum _ { j
= 1 } ^ { k } ( - 1 ) ^ { k + 1 - j } \left ( \binom { k } { j - 1 } + \binom {
k } { j } \right ) x ^ { 2 j } \partial ^ { 2 ( k + 1 - j ) } \phi \\
         & + x ^ { 2 j } \phi - 2 \sum _ { j = 0 } ^ { k } ( - 1 ) ^ { k - j }
\binom { k } { j   } 2 j . x ^ { 2 j - 1 }  \partial ^ { 2 ( k - j ) + 1 } \phi
\\
         = &  \sum _ { j = 0 } ^ { k + 1 } ( - 1 ) ^ { k + 1 - j } \binom{ k + 1
} { j } x ^ { 2 j } \partial ^ { 2 ( k + 1 - j ) } \phi - 2 \sum _ { j = 0 } ^ {
k } ( - 1 ) ^ { k - j } \binom { k } { j } 2 j . x ^ { 2 j - 1 } \partial ^ { 2
( k -   j ) + 1 } \phi,
    \end{align*}
and hence,
    \begin{align*}
          \Hop ^ { k + 1 } \phi = & \sum _ { j = 0 } ^ { k + 1 } ( - 1 ) ^ { k + 1 -
j } \binom { k + 1 } { j } x ^ { 2 j } \partial ^ { 2 ( k + 1 - j ) } \phi\\
&- 2
\sum _ { r = 1 } ^ { k } \left ( \sum _ { j = 1 } ^ { r } ( - 1 ) ^ { r - j }
\binom{ r } { j } . 2 j . x ^ { 2 j - 1 } \partial ^ { 2 ( r - j ) + 1 }  \Hop ^ { k
- r } \phi  \right ) \\
        & + (\text{Terms of order} \leq  2 ( k + 1 ) - 4 ).
    \end{align*}
Applying the principle of mathematical induction, the proof is completed.
\end{proof}

\begin{proof}[Proof of Theorem \ref{Mono-ineq-gen-functin}]
We prove the result using Lemma \ref{H^P-lemma} and the principle of mathematical induction. First, note that the inequality holds for $p = 0$, by an application of integration by parts.

Now, observe that
    \begin{align*}
         \Hop A^\ast \phi = & \left (  \sigma \partial ^ 3  \phi -  x ^ 2  \sigma
\partial \phi \right ) + \left ( 3 \partial \sigma \partial ^ 2 \phi - x ^ 2
\partial \sigma . \phi \right ) + 3 \partial ^ 2 \sigma \partial \phi + GT\\
          A^\ast  \Hop \phi = &  \left (   \sigma \partial ^ 3 \phi - x ^ 2  \sigma
\partial \phi \right ) + \left (  \partial \sigma \partial ^ 2 \phi - x ^ 2
\partial \sigma . \phi - 2 x \sigma \phi \right ) + GT,
    \end{align*}
    and hence
    \begin{align*}
        \left (    \Hop    A^\ast - A^\ast  \Hop  \right ) \phi = 2  \partial \sigma \partial ^ 2
\phi - 2 x \sigma \phi + 3 \partial ^ 2  \sigma \partial \phi +GT.
    \end{align*}
Similarly, we have
\begin{align*}
     \left (  \Hop L^\ast-L^\ast  \Hop \right )  \phi = & 2 \left(   \sigma ^ 2 x \partial  \phi -
\sigma \partial \sigma \partial ^ 3 \phi \right ) - 5 \sigma \partial ^ 2 \sigma
\partial ^ 2 \phi + GT.
\end{align*}
Now assume that the result is true for $p$. Then we can write
\begin{align*}
    2 \inpr [ p + 1 ] { \phi } { L^\ast \phi}   +  \left \| A ^\ast\phi \right \| ^ 2 _ {
p + 1 } =  &  \left ( 2 \inpr [ p ] {    \Hop   \phi } { L^\ast   \Hop \phi } + \left \| A^\ast  \Hop
\phi \right \| ^ 2 _ { p }  \right ) 
    +  2 \inpr [ p ] {  \Hop \phi } { \left (   \Hop L^\ast - L^\ast  \Hop \right ) \phi }\\
    &\quad\quad\quad\quad\quad+ \left\|(
 \Hop A^\ast - A^\ast  \Hop ) \phi \right \| ^ 2 _ { p } +  2 \inpr [ p ] { A^\ast  \Hop \phi } {  \left (
 \Hop A^\ast - A^\ast  \Hop \right )  \phi }.
\end{align*}
For $\phi \in \Sc(\R)$, we have $\Hop \phi \in \Sc(\R)$. The relevant bound for the first term on the right hand side then follows from the induction hypothesis. For the other terms, observe that
\begin{enumerate}[label=(\roman*)]
    \item $\left \| (     \Hop   A^\ast - A^\ast  \Hop   ) \phi \right \| ^ 2 _ { p } =   FSGT$

    \item \begin{align*}
         2 \inpr [ p ] {  \Hop  \phi } { \left (    \Hop    L^\ast - L^\ast    \Hop  \right ) \phi }=
&  4 \inpr [ p ] { x ^ 2  \phi } { \sigma ^ 2 x  \partial \phi } - 4 \inpr [ p ]
{ x ^ 2  \phi } { \sigma \partial \sigma \partial ^ 3  \phi } - 1 0  \inpr [ p ]
{ x ^ 2  \phi } { \sigma ^ 2  \partial ^ 2 \sigma\partial ^ 2  \phi }\\
         & - 4  \inpr [ p ] { \partial ^ 2  \phi } { \sigma ^ 2 x  \partial \phi
} + 4 \inpr [ p ] { \partial ^ 2   \phi} {  \sigma  \partial \sigma \partial ^ 3
\phi } + 10  \inpr[ p ] { \partial ^ 2 \phi } { \sigma \partial ^ 2 \sigma
\partial ^ 2  \phi }\\
&+ FSGT
\end{align*}
and 
\item     \begin{align*}
        2 \inpr [ p ] { A^\ast  \Hop \phi } {  \left (    \Hop   A ^\ast-  A^\ast  \Hop \right )  \phi } =
&  4 \inpr [ p ] { \sigma \partial ^ 3  \phi } { \partial \sigma \partial ^ 2
\phi } + 6 \inpr [ p  ] { \sigma \partial ^ 3  \phi } { \partial ^ 2 \sigma
\partial \phi } \\
         & - 4 \inpr [ p  ] { x ^ 2  \sigma \partial \phi } { \partial \sigma
\partial ^ 2 \phi } - 6  \inpr [ p ] { x ^ 2  \sigma \partial \phi } { \partial
^ 2  \sigma \partial \phi }+ FSGT.
    \end{align*}
\end{enumerate}
In the above computation, we have used the observation that
  $ \inpr [ p ] { x ^ 2   \phi } { \sigma ^ 2 x  \partial \phi },\;  \inpr [ p ]
{ \partial ^ 2  \phi } { \sigma ^ 2 x  \partial \phi } $ can be reduced as
a finite sum  of good terms. Now,
\begin{align*}
    2 \inpr [ p + 1 ] { \phi } { L^\ast \phi }  +  \left \| A^\ast  \phi \right \| ^ 2 _ {
p + 1 } =  &  \left ( 2   \inpr [ p ] {  \Hop  \phi } { L^\ast  \Hop \phi }  + \left \| A ^\ast  \Hop
\phi \right \| ^ 2 _ p \right )  + \left ( S _ 6 ' + S _ 5' \right )  + \left (
S _ 6'' + S _ 5 '' \right )
\end{align*}
   where
   \begin{align*}
        S  _ 6 ' = &   -   4   \inpr [ p ] { x ^ 2  \phi } { \sigma \partial
\sigma \partial  ^ 3  \phi } - 4\inpr [ p ] { x ^ 2  \sigma \partial \phi } {
\partial \sigma \partial ^ 2 \phi } \\
           S _ 5 ' = & -  10   \inpr [ p ] { x ^ 2  \phi } { \sigma \partial ^ 2
 \sigma \partial ^ 2 \phi } - 6 \inpr [ ] { x ^ 2   \sigma \partial \phi } {
\partial ^ 2  \sigma \partial \phi } \\
             S _ 6 '' = &  4 \inpr [ p ] { \partial ^ 2  \phi } { \sigma
\partial \sigma \partial ^ 3  \phi } + 4 \inpr [ p ] { \sigma\partial ^ 3  \phi
}{ \partial \sigma  \partial ^ 2  \phi } \\
           S _ 5 '' =&  10 \inpr [ p ] { \partial ^ 2  \phi } { \sigma \partial
^ 2  \sigma  \partial ^ 2  \phi } + 6  \inpr [ p  ] {  \sigma \partial ^ 3  \phi
}{\partial ^ 2  \sigma \partial \phi }
   \end{align*}
For $S  _ 6 '$, note that
   \begin{align*}
       \inpr [ p  ] { x ^ 2   \phi} {  \sigma \partial \sigma \partial ^  3
\phi } = & \inpr [ 0  ] {  \Hop ^ {  2 p } (  x ^ 2  \phi ) } {  \sigma \partial
\sigma \partial ^  3 \phi }\\
         = &    -  \sum  _ { i =  -  1 }  ^ {  2 p  - 1  } \binom { 2 p  } {  i
+
 1 }  \left(  2  p -  i - \frac { 5 }  { 2  } \right ) \inpr [  0 ] {
 x ^  { 2  i +  4 } \partial^  { 2  p -  i }  \phi}  { \sigma \partial ^  2
\sigma\partial ^ { 2 p  - i}   \phi } + FSGT.
   \end{align*}
and
  \begin{align*}
       \inpr [ p  ] {  x ^  2 \sigma \partial \phi } {  \partial \sigma \partial
^ 2  \phi } = &  \inpr [ 0  ] { \Hop^  { 2 p  }(  x  ^ 2  \sigma \partial \phi) }  {
\partial \sigma \partial ^  2 \phi  }\\
         = &   \sum_ {  i = -  1  } ^ {  2  p - 1 } \binom{  2 p  } { i +  1 }
\left ( 2 p - i - \frac {  3 } {  2 } \right ) \inpr [0  ] {  x  ^ { 2  i +  4 }
\partial ^  { 2 p  - i }  \phi } {  \sigma \partial ^ 2  \sigma\partial ^ {  2 p
 - i }   \phi }+ FSGT.
   \end{align*}

For $S  _ 5 '$, note that
\begin{align*}
       \inpr [ p  ] { x ^ 2  \phi } { \sigma \partial ^ 2  \sigma \partial ^  2
\phi }=
 & \inpr [ 0 ]  {  \Hop  ^ {  2 p  } (  x ^ 2  \phi )  } {  \sigma \partial^ 2
\sigma \partial ^  2   \phi }\\
         = &  - \sum _ { i =- 1 } ^ {  2p  - 1 } \binom  { 2 p } {  i + 1 }
\inpr [  0 ] {  x  ^ { 2i  + 4 }  \partial^ {  2 P  - i  }  \phi } {  \sigma
\partial ^  2 \sigma \partial ^  { 2 p -  i } \phi }+ FSGT
   \end{align*}
and
 \begin{align*}
       \inpr [ p  ] {  x ^ 2 \sigma  \partial \phi } {   \partial ^ 2   \sigma
\partial \phi }= &  \inpr[ 0  ] {  \Hop  ^  { 2 p } ( x  ^ 2  \sigma\partial \phi )  } {
 \partial ^ 2 \sigma \partial \phi  } \\
         = &  \sum _ { i = - 1 }  ^ { 2  p - 1 } \binom { 2 p  } { i + 1} \inpr
[ 0  ] {x
 ^ { 2  i + 4 } \sigma\partial ^ { 2 p -  i }  \phi } {  \partial ^ 2  \sigma
\partial ^
 {  2 p - i }  \phi }+FSGT.
   \end{align*}

For $S  _ 6 ''$, note that 
 \begin{align*}
       \inpr [ p  ] { \partial ^ 2  \phi } { \sigma \partial \sigma \partial ^ 3
 \phi } = & \inpr [ 0 ] {  \Hop ^ { 2 p } ( x ^ 2  \phi ) } { \sigma \partial \sigma
\partial ^ 3  \phi } \\
         =& \sum _ { i  = - 2  } ^ {  2 p  - 2 } \binom { 2  p  } { i + 2  }
\left ( 2  p -  i - \frac {  5 }  { 2  } \right ) \inpr [ 0  ] {   x ^  { 2  i
 + 4} \partial ^  { 2 p  - i  }  \phi} { \sigma \partial ^  2  \sigma \partial ^
{ 2  p -
  i }   \phi }+ FSGT
   \end{align*}
   and
   \begin{align*}
       \inpr [ p  ] {  \sigma \partial ^ 3  \phi } {  \partial \sigma \partial ^
2  \phi } = & \inpr [ 0 ]  {  \Hop  ^ {  2 p  } (  \sigma \partial ^ 3  \phi ) } {
\partial \sigma \partial ^ 2 \phi  }\\
         = &  - \sum _ { i  = -2 }  ^ { 2 p  - 2 }  \binom{ 2 p }  { i  + 2 }
\left ( 2  p- i - \frac {  3 } {  2 } \right ) \inpr [ 0 ]  { x ^  { 2 i +  4 }
\partial
 ^ { 2  p  - i }   \phi } { \sigma \partial ^  2 \sigma \partial^  { 2   p -  i
}  \phi }+ FSGT.
   \end{align*}
   
For $S  _ 5 ''$, note that
\begin{align*}
       \inpr [  p ] { \partial ^  2  \phi }  {  \sigma \partial ^ 2  \sigma
\partial ^ 2  \phi } = &  \inpr[ 0  ] {  \Hop  ^ { 2  p } ( \partial ^ 2  \phi ) }
{  \sigma \partial ^ 2 \sigma \partial ^  2 \phi }\\
         = &  \sum _ {  i = - 2 }  ^  { 2 p - 2 }  \binom{ 2 p }  {  i +  2 }
\inpr [ 0  ] {  x ^ { 2  i + 4  } \partial ^ { 2  p -  i }  \phi }  { \sigma
\partial ^ 2 \sigma \partial ^ { 2  p - i }  \phi }+ FSGT
   \end{align*}

and

   \begin{align*}
       \inpr [ p ] { \sigma \partial ^ 3  \phi} {   \partial ^ 2  \sigma
\partial \phi } = &  \inpr [  0 ] {  \Hop ^ { 2 p } ( \sigma \partial ^ 3 \phi ) } {
 \partial ^ 2 \sigma \partial \phi } \\
         = &   -\sum _ { i  = - 2 } ^ { 2 p - 2 } \binom { 2 p } { i + 2 } \inpr
[ 0 ] { x ^ { 2 i + 4 } \partial ^ { 2 p - i }   \phi } { \sigma \partial ^  2
\sigma \partial ^ { 2 p - i } \phi }+ FSGT.
   \end{align*}
Consolidating the computations, we have
\begin{align*}
    S _ 5 '= & 4 \sum _ { i = - 1 } ^ { 2 p - 1 } \binom { 2 p } { i + 1 } \inpr
[ 0 ] { x ^ { 2 i + 4 } \partial ^  { 2 p - i }  \phi } { \sigma \partial ^ 2
\sigma \partial ^ { 2 p - i }  \phi }\\
    S _ 6 ' = & 4 \sum _ { i = - 1 } ^ { 2 p - 1 } \binom {2 p } { i + 1 }
\left( 2 p - i - \frac { 5 } { 2 } \right ) \inpr [ 0 ] { x ^ { 2 i + 4 }
\partial ^ { 2 p  -i }  \phi } { \sigma \partial ^ 2 \sigma\partial ^ { 2 p - i
} \phi }\\
    & - 4 \sum _ { i  =-  1 } ^ { 2  p - 1 } \binom{ 2 p }  { i +  1 } \left ( 2
p  - i - \frac { 3 } { 2 } \right ) \inpr [ 0 ] { x ^ { 2 i + 4 } \partial ^ { 2
p  - i } \phi}{\sigma \partial ^ 2  \sigma\partial ^ { 2 p - i } \phi }\\
    = & - 4 \sum _ { i = - 1 } ^ { 2 p - 1 }\binom{ 2 p } { i + 1 } \inpr [ 0 ]
{ x ^ { 2 i + 4 } \partial ^ { 2 p - i }  \phi} { \sigma \partial ^ 2 \sigma
\partial ^ { 2 p - i } \phi }\\
    S _ 5 '' = & 4 \sum _ { i = - 2 } ^ { 2 p - 2 } \binom { 2 p } { i + 2 }
\inpr [ 0 ] { x ^ { 2 i + 4 } \partial ^ { 2 p - i }  \phi } { \sigma \partial ^
2   \sigma \partial  ^ { 2 p - i }  \phi }\\
    S _ 6 '' = &  - 4 \sum _ { i = - 2 } ^ { 2 p - 2 } \binom { 2 p } { i + 2 }
\inpr [ 0 ] { x ^ { 2 i + 4 } \partial ^ { 2 p - i  }  \phi } { \sigma \partial
^ 2 \sigma \partial ^ { 2 p - i }  \phi }.
\end{align*}
Therefore we have
\begin{align*}
    2 \inpr [ p + 1 ] { \phi } { L^\ast \phi }  + \left \| A ^\ast\phi \right \|  ^ 2 _ {
p + 1 } = & \left ( 2  \inpr [ p ] {   \Hop  \phi } { L^\ast  \Hop \phi } + \left \| A^\ast  \Hop \phi
\right \| ^ 2 _ { p }  \right) + FSGT.
\end{align*}
For $\phi \in \Sc(\R)$, we have $\Hop \phi \in \Sc(\R)$. The relevant bound for the first term on the right hand side then follows from the induction hypothesis. The proof is now complete.
\end{proof}

\subsection{Third Approach}\label{third approach}
\begin{proof}[Proof of Theorem \ref{proof-by-3-line lemma in affine case}]
Let $\phi \in \Sc(\R) \subset \Sc(\R; \C)$. To simplify the illustration of the technique, we take $\beta = 1$ and $\alpha = \gamma = \delta = 0$, which yields $L^\ast \phi =\frac{1}{2} \partial ^ 2 \phi$ and $ A ^ \ast \phi =
- \partial \phi $.

     Consider the complex valued function 
     \[ F ( z ) = 2 \inpr [ 0,\C ] { \phi }{  \Hop
^ { \Bar { z } } L ^ \ast   \Hop ^ { - \Bar { z } }  { \phi } }   + \inpr [ 0,\C ]
 {  \Hop ^ { z } A ^ \ast   \Hop ^ { - z } \phi } {  \Hop ^ { \Bar{ z }  } A ^ \ast   \Hop ^ { -
\Bar{ z } }  { \phi } }, z \in S.\]
By expanding the right hand side of the above definition, we have the analiticity of the function $F$ on $0 <  Re ( z ) < 1$ and continuity on $ 0
\leq Re ( z ) \leq   1 $. Now, for any $y \in \R$,
\begin{align*}
    F ( i y ) = & 2 \inpr [ 0,\C ] { \phi } {  \Hop ^ { - i  y } L ^ \ast  \Hop ^  { i y } { \phi } } +\inpr [ 0,\C ] {  \Hop ^ { i y } A ^ \ast  \Hop ^ { - i y } \phi } {  \Hop^ {
- i y } A ^ \ast   \Hop ^ {  i y } { \phi } } \\
    = & 2 \inpr [ 0,\C ] {  \Hop ^ { i y } \phi } { L ^ \ast   \Hop ^ { i y }  { \phi }
} + \inpr [ 0,\C ] {  \Hop ^ { 2 i y } A ^ \ast  \Hop ^ { - i y } \phi } { A ^ \ast  \Hop ^
 { i y }  { \phi } } \\
     = & 2 \inpr [ 0,\C ] {  \Hop ^ { i y } \phi } { L ^ \ast   \Hop ^ { i y }  { \phi
} } + \inpr [ 0,\C ] { \left (  \Hop ^ { 2 i y } A ^ \ast  \Hop ^ { - 2 i y} \right )  \Hop ^ {
i y } \phi } { A ^ \ast   \Hop ^ { i y }  { \phi } } \\
     = &  \inpr [ 0,\C ] {  \Hop ^ { i y } \phi } { \partial ^ 2   \Hop ^ { i y }    {
\phi } } +\inpr [ 0,\C ] { \left (  \Hop ^ { 2 i y } A ^ \ast  \Hop ^ { - 2 i y } \right )  \Hop
^ { i y } \phi} { A ^ \ast   \Hop ^ {  i y }   { \phi } } \\
      = & -  \inpr [ 0,\C ] { \partial  \Hop ^ { i y } \phi } { \partial   \Hop ^ { i y }
   { \phi } } + \inpr [ 0,\C ] { \left (  \Hop ^  { 2 i y } A ^ \ast  \Hop ^ { - 2 i y }
\right )  \Hop ^ { i y } \phi } { A ^ \ast   \Hop ^ { i y }    { \phi } } \\
       =&\inpr [ 0,\C ] { \left (  \Hop ^ {  2 i y  } A ^ \ast  \Hop ^ { - 2  i y } - A ^
\ast \right )  \Hop ^ {  i y } \phi } { A ^ \ast   \Hop ^ { i  y }    { \phi } }.
\end{align*}
Using Lemma \ref{bndblm1} and  and Proposition \ref{Hop-C-properties}, we have
\[\left | F  ( i y ) \right | \leq   C \|  \Hop ^ { i y } \phi \| _ {0,\C} \|  \Hop ^ { i
y }    { \phi } \| _ {0,\C} \\
    \leq   C \| \phi \| _{0,\C} ^ 2,\]
for some $C > 0$, independent of the choice of $\phi$. Again for $ z = 1 + i y, y \in \R $ , we have
\begin{align*}
     F ( 1 + i  y ) = & 2 \inpr [ 0,\C ] { \phi } {  \Hop ^ { 1 - i  y } L ^ \ast
  \Hop ^ { - 1 + i y }    { \phi } } + \inpr [ 0,\C ] {  \Hop ^ { 1 + i  y } A ^ \ast  \Hop ^
{  - 1 - i  y} \phi } {  \Hop  ^ {1   - i y  } A ^ \ast   \Hop ^  { -  1  + i y  }   
{ \phi } }\\
    = &  \inpr  [ 0,\C ] {   \Hop ^ { 1 + i y } \phi } {  \partial ^ 2    \Hop ^
 { - 1  + i  y }    { \phi } }  +  \inpr [ 0,\C ] { \left (  \Hop ^  { 2 (  1 + i y
) } A ^
 \ast  \Hop ^ { - 2  ( 1 + i  y ) } \right )  \Hop ^ {  1 +  i y } \phi } { A ^ \ast  \Hop
^
 { - 1 +  i y }    { \phi } } \\
      = & - \inpr [ 0,\C ]{ \partial  \Hop ^ { 1 + i  y } \phi } { \partial  \Hop ^ {
 -  1 + i  y }    { \phi } } +  \inpr [ 0,\C ] { \left (  \Hop ^ {  2 (  1 +  i y  )
} A ^ \ast  \Hop ^ { -  2 (  1 +i y  ) } \right )  \Hop ^  { 1  + i  y } \phi } {  A ^
\ast  \Hop ^ {
 - 1  + i  y  }    { \phi } } \\
       = & \inpr [ 0,\C ] { \left (  \Hop ^  { 2  (  1 + i  y ) }  A ^ \ast  \Hop ^ {  - 2
(  1 +
 i y  ) } - A ^ \ast \right )  \Hop  ^ {  1 + i  y } \phi } {  A ^ \ast  \Hop ^ {
 - 1  + i  y }    { \phi } }.
\end{align*}
and consequently,
\begin{align*}
    | F ( 1 +  i y ) | \leq &  C \| \phi \| _ {0,\C} \|    { \phi } \| _ {0,\C} \\
      = &  C \| \phi \| ^ 2 _ {0,\C},
\end{align*}
for some $C > 0$, independent of the choice of $\phi$. Therefore, by the Three Lines Lemma, we have
\[\left | F ( x  + i y ) \right | \leq  ( C \| \phi \| ^ 2  _ {0,\C} ) ^ { 1 - x
} ( C \| \phi \| ^ 2 _ {0,\C} ) ^ x 
    \leq   C \| \phi \| ^ 2 _ {0,\C},\]
for all $ z \in  S = \left \{ x + i y = z \in \C : \; 0 \leq x \leq 1 \right \}$. In particular, for $ 0 <  p < 1 $, we  have  \[ \left | F  ( p  + i 0  )
\right | \leq     C \| \phi \| ^ 2  _ {0,\C} \]
and hence
\[2 \inpr [ 0,\C ] { \phi } {  \Hop ^  p L ^ \ast  \Hop ^ {  - p } \phi } +  \inpr[ 0,\C ] {
 \Hop ^ { p } A ^ \ast  \Hop ^ { - p } \phi } {  \Hop ^ p A ^ \ast  \Hop ^ {  - p } \phi } \leq
   C \| \phi \| ^ 2 _ {0,\C},\]
   with $C > 0$, independent of the choice of $\phi$. As $\phi$ is real-valued, using \eqref{real-complex-inner-pr-consistent}, we have
   \[2 \inpr [ 0] { \phi } {  \Hop ^  p L ^ \ast  \Hop ^ {  - p } \phi } +  \inpr[ 0 ] {
 \Hop ^ { p } A ^ \ast  \Hop ^ { - p } \phi } {  \Hop ^ p A ^ \ast  \Hop ^ {  - p } \phi } \leq
   C \| \phi \| ^ 2 _ {0}.\]
Replacing $\phi$ by $\Hop^p \phi$, we get the result for $0 < p < 1$. The argument is similar for other non-negative non-integer values of $p$.
\end{proof}

\begin{remark}
    At the present stage of this approach, we are able to handle affine multipliers $\sigma$ and $b$; but we hope to extend the methodology to incorporate more general multipliers  in a future work. Theorem \ref{proof-by-3-line lemma in affine case} yields an alternative proof of Theorem \ref{Mono-ineq-affine-Form}, for non-negative non-integer $p$ regularities.
\end{remark}

\textbf{Funding:} A. K. Nath would like to acknowledge the fact that he was supported by the University Grants Commission (Government of India) Ph.D research Fellowship. S. Bhar would like to acknowledge the fact that he was partially supported by the Matrics grant MTR/2021/000517 from the Science and Engineering Research Board (Department of Science \& Technology, Government of India). 

\bibliographystyle{plain}
\bibliography{ref}

\end{document}